\newcommand{\GG}{\mathbf{G}} 
\newcommand{\PP}{\mathbf{P}} \newcommand{\QQ}{\mathbf{Q}} 
\newcommand{\ZZ}{\mathbf{Z}} 
\newcommand{\One}{\mathbf{1}}
\newcommand{\id}{\mathrm{id}}
\newcommand{\sep}{\mathrm{sep}}
\DeclareMathOperator{\Frob}{Frob} 
\DeclareMathOperator{\Ext}{Ext} 
\DeclareMathOperator{\Lie}{Lie}
\DeclareMathOperator{\Res}{Res}
\DeclareMathOperator{\Gal}{Gal}
\DeclareMathOperator{\Hom}{Hom}
\DeclareMathOperator{\Aut}{Aut}
\newcommand{\Ho}{{\mathcal{H}}}
\renewcommand{\th}{{\theta}}
\newcommand\dash{\nobreakdash-\hspace{0pt}}
\newcommand{\comment}[1]{}
\newcommand{\llb}{[\mspace{-1mu}[}  % [[
\newcommand{\rrb}{]\mspace{-1mu}]}  % ]]
\newcommand{\llp}{(\mspace{-2mu}(}  % ((
\newcommand{\rrp}{)\mspace{-2mu})}  % ))
\theoremstyle{plain}
\newtheorem{introtheorem}{Theorem}
\newtheorem{theorem}{Theorem}
\newtheorem{corollary}{Corollary}
\newtheorem{lemma}{Lemma}
\newtheorem{proposition}{Proposition}
\theoremstyle{definition}
\newtheorem{definition}{Definition}
\newtheorem{example}{Example}
\newtheorem{remark}{Remark}
\title[$1$-$t$-Motifs]{$1$-$t$-Motifs}
\author[Lenny Taelman]
\begin{document}

\begin{abstract}
  We show that the module of rational points on an abelian $t$\dash module $ E $ is
  canonically isomorphic with the module $\Ext^1( M_E, K[t] ) $ of extensions of the
  trivial $t$-motif $ K[t] $ by the $t$-motif $ M_E $ associated with $E$. This generalizes
  prior results of Anderson and Thakur, Papanikolas and Ramachandran, and Woo.
  
  In case $ E $ is uniformizable then we show that this extension module is canonically
  isomorphic with the corresponding extension module of Pink-Hodge structures. 
  
  This situation is formally very similar to Deligne's theory of 1-motifs and
  we have tried to build up the theory in a way that makes this analogy as clear as possible.
\end{abstract}

\maketitle

\setcounter{tocdepth}{1}
\tableofcontents

\section{Introduction \& statement of the main results}\label{statements}

Anderson and Thakur \cite{Anderson90} have shown that the group of $ K $\dash rational points on the $n$\dash th tensor power of the Carlitz module \index{Carlitz module} \index{tensor power of the Carlitz module} is canonically isomorphic with the group of extensions 
\index{extension group} of the corresponding $t$-motif $C^{\otimes n}$ over $ K$ by the trivial
$t$\dash motif $K[t]$, \index{trivial $t$-motive} and Papanikolas and Ramachandran \cite{Papanikolas03} and Woo \cite{Woo95} have a similar result for points on Drinfeld modules. \index{Drinfeld module}

We provide a generalization of these results.

The present treatment is less dependent on calculations and more visibly functorial than the prior ones in \cite{Anderson90} and \cite{Papanikolas03}. In fact, this paper grew out of an attempt to understand these results, and it was written with the belief that with an approach which is sufficiently intrinsic and functorial the results would follow in their full generality at once, without extra effort.

We now summarize the main results of this paper.

Fix a finite field $ k $ of $ q $ elements and a field $ K $ containing $ k $. Consider the skew polynomial ring $ K[\sigma] $ \index{skew polynomial ring} whose elements are polynomials $ \sum_i x_i\sigma^i $ and where multiplication is defined by the rule $ \sigma x = x^q \sigma $ for all $ x \in K $. Note that $ K[\sigma] $ is the ring of endomorphisms of the $ k $-vector space scheme $ \GG_{a,K} $, in particular, if $ q $ is a prime number then $ K[\sigma] $ is the endomorphism ring of the additive group scheme over $ K $.  Note that $ k $ is central in $ K[\sigma] $. Denote by $ K[\sigma,t] $ the ring 
$ k[t] \otimes_k K[\sigma] $. Elements of this ring can be identified with polynomials in $ t $ and $ \sigma $ over $ K $ and multiplication satisfies $ t\sigma=\sigma t$, $ tx = xt $ and
$ \sigma x = x^q \sigma $ for all $ x $ in $ K $. 

The field $ K $ is naturally a left $ K[\sigma] $\dash module through $ \sigma x= x^q $ and similarly
the ring $ K[t] $ is naturally a left $ K[t,\sigma] $\dash module.

\begin{introtheorem}\label{thm1} Let $ M $ be a left $ K[t,\sigma] $\dash module which is free and finitely
generated both as a $ K[t] $\dash module and as a $ K[\sigma] $\dash module. There is an
isomorphism of $ k[t] $-modules
\[
	 \Hom_{K[\sigma]}( M, K ) \longrightarrow \Ext^1_{K[t,\sigma]}( M, K[t] ),
\]
functorially in $ M $ and compatible with field extensions $ K \to K' $.
\end{introtheorem}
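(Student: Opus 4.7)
The plan is to exhibit a length-one projective resolution of $M$ as a left $K[t,\sigma]$-module and then compute $\Ext^1_{K[t,\sigma]}(M, K[t])$ directly. Since $t$ is central in $K[t,\sigma]$, we get the Koszul-style sequence
\[
0 \longrightarrow K[t,\sigma] \otimes_{K[\sigma]} M \xrightarrow{\ \Psi\ } K[t,\sigma] \otimes_{K[\sigma]} M \xrightarrow{\ \mu\ } M \longrightarrow 0,
\]
where $\mu(a \otimes m) = am$ and $\Psi(a \otimes m) = at \otimes m - a \otimes tm$. Because $M$ is free over $K[\sigma]$, both copies of the tensor product are free $K[t,\sigma]$-modules. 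Using that $K[t,\sigma]$ is right-$K[\sigma]$-free on $\{t^n\}_{n \ge 0}$ (again by centrality of $t$), one obtains the decomposition $K[t,\sigma] \otimes_{K[\sigma]} M = \bigoplus_{n \ge 0} t^n \otimes M$; in these coordinates both $\Psi$ and $\mu$ become transparent, and exactness follows from a direct telescoping argument.

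Applying $\Hom_{K[t,\sigma]}(-, K[t])$ and the tensor-hom adjunction $\Hom_{K[t,\sigma]}(K[t,\sigma] \otimes_{K[\sigma]} M, K[t]) = \Hom_{K[\sigma]}(M, K[t])$ identifies $\Ext^1_{K[t,\sigma]}(M, K[t])$ with the cokernel of the operator
\[
\Psi^\ast \colon \Hom_{K[\sigma]}(M, K[t]) \longrightarrow \Hom_{K[\sigma]}(M, K[t]), \qquad f \longmapsto t \cdot f - f \circ t_M,
\]
where $t_M$ is the action of $t$ on $M$ and $t \cdot f$ denotes multiplication by $t$ in the target $K[t]$. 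Next, any $f \in \Hom_{K[\sigma]}(M, K[t])$ decomposes uniquely as $f = \sum_n f_n t^n$ with $f_n \in V := \Hom_{K[\sigma]}(M, K)$ (and only finitely many $f_n$ nonzero, because $M$ is finitely generated over $K[\sigma]$), yielding a canonical isomorphism $\Hom_{K[\sigma]}(M, K[t]) \cong V \otimes_K K[t]$. Under this isomorphism $\Psi^\ast$ becomes $(1 \otimes t) - (T_V^\ast \otimes 1)$, where $T_V^\ast \in \End_K(V)$ is the pullback of $t_M$, so that $(T_V^\ast g)(m) = g(t_M m)$. A second telescoping argument shows that the evaluation map $V \otimes_K K[t] \to V$, $v \otimes p(t) \mapsto p(T_V^\ast)\, v$, is surjective with kernel precisely $\im \Psi^\ast$, giving the desired isomorphism $\Hom_{K[\sigma]}(M, K) \xrightarrow{\sim} \Ext^1_{K[t,\sigma]}(M, K[t])$.

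A short check shows that the $k[t]$-action on $\Ext^1$ inherited from multiplication in $K[t]$ corresponds under the isomorphism to $T_V^\ast$ on $V$, which is the natural $k[t]$-structure on $\Hom_{K[\sigma]}(M, K)$ given by $(t \cdot f)(m) = f(t_M m)$. Functoriality in $M$ is automatic from the naturality of every step, and compatibility with a field extension $K \to K'$ follows because all the ingredients (the resolution, the adjunction, the coefficient decomposition, and the cokernel identification) base-change correctly. The principal technical content is the two telescoping arguments; both are elementary, and the only point requiring minor care is the handling of the noncommutative tensor products.
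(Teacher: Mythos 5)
Your proof is correct, but it takes a genuinely different route from the paper. You compute $\Ext^1_{K[t,\sigma]}(M,K[t])$ directly from the standard two-term free resolution $0 \to K[t,\sigma]\otimes_{K[\sigma]}M \to K[t,\sigma]\otimes_{K[\sigma]}M \to M \to 0$ attached to the central variable $t$ over $K[\sigma]$, and then reduce to the elementary fact that for a $k$-linear operator $T$ on $V$ the cokernel of $t-T$ on $V\otimes_k k[t]$ is $V$ via evaluation; note here that your ``$V\otimes_K K[t]$'' should read $V\otimes_k k[t]$ (equivalently $\bigoplus_n V t^n$), since $V=\Hom_{K[\sigma]}(M,K)$ carries no natural $K$-structure ($K$-scaling destroys $\sigma$-semilinearity), though this slip is harmless because only the $k[t]$-structure is used. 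The paper instead identifies $\Ext^1_{K[t,\sigma]}(M,K[t])$ with $\Hom_{K[t,\sigma]}(M, K\llp t^{-1}\rrp/K[t])$ via the connecting map of $0\to K[t]\to K\llp t^{-1}\rrp \to K\llp t^{-1}\rrp/K[t]\to 0$, and with $\Hom_{K[\sigma]}(M,K)$ via the residue map; the key input there is the vanishing of $\Hom$ and $\Ext^1$ into $K\llp t^{-1}\rrp$, obtained from the weight theory of Dieudonn\'{e} $t$-modules (Proposition \ref{dieuext}), which is exactly where the ``free and finitely generated over $K[t]$'' hypothesis and Proposition \ref{fingen} enter. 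Your argument is more elementary and self-contained (no Dieudonn\'{e} theory), and in fact it only uses freeness and finite generation over $K[\sigma]$, so it proves a slightly more general statement; it also produces explicit cocycle representatives. What the paper's route buys is the Laurent-tail description (\ref{explicitext}) of the extension attached to $\mu\in E_M(K)$ (used later, e.g. in Lemma \ref{torsion}), the direct link to the residue-duality formalism of \S\ref{torsionduality} that recurs in the Hodge-theoretic part, and the transparent comparison with the Anderson--Thakur and Papanikolas--Ramachandran descriptions. Your remarks on the $k[t]$-linearity, functoriality, and base-change compatibility are all correct as stated.
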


The proof of this theorem will be given in section \S \ref{algebraictheory}.

Our main interest in this theorem lies in the case where $ M $ is an (effective) $ t $\dash motif.
Assume that a $ k $-algebra homomorphism $ \i : k[t] \to K $ has been fixed (the ``structure homomorphism''), and that relative to this homomorphism $ M $ is an an effective $t$-motif which is finitely generated over $ K[\sigma] $ (\emph{see} \P \ref{tmotifs} for the definition). Then $ \Hom_{K[\sigma]}( M, K ) $ is the module $E(K)$ of $ K $\dash valued points of the abelian
$ t $\dash module $ E $ associated to $ M $ (\emph{see} \P \ref{tmodules}), $ M $ satisfies the hypothesis of Theorem \ref{thm1} and we obtain a canonical isomorphism 
\begin{equation}\label{basis}
	 E( K ) = \Ext^1_{K[t,\sigma]}( M, K[t] ).
\end{equation}
For $ M $ a tensor power of the Carlitz module this was proven by Anderson and Thakur
\cite{Anderson90}, and for $ M $ a Drinfeld module a similar result was shown by 
Papanikolas and Ramachandran \cite{Papanikolas03} (\emph{see} the end of
section \S \ref{algebraictheory} for the precise relation between their results and ours.) 
The isomorphism (\ref{basis}) should be compared with the canonical isomorphism
\[
	A^\vee( L ) = \Ext^1( A, \GG_m )
\]
for an abelian variety $ A $ over a field $ L $. Note however that the collection of
$t$\dash motifs $ M $ to which (\ref{basis}) applies is closed under tensor product, very much unlike the analogous situation with Abelian varieties.

The isomorphism (\ref{basis}) allows us to work with extensions of $t$-motifs like $ M $ by
Artin $t$-motifs ($t$-motifs that are trivialised by a finite separable extension of $ K $, in other words: $t$-motifs corresponding to \emph{finite} image Galois representations) in much the same way as one uses Deligne's theory of $1$-motifs \index{$1$-motives} to work with mixed motifs \index{mixed motives} of weights $-1$ and $0$. 

We call a \emph{1-$t$-module} a triple  $ ( X, E, u ) $ consisting of
\begin{enumerate}
\item a free and finitely generated $ k[ t ] $-module
	$ X $ equipped with a continuous action of
	$ \Gal(K^\sep/K) $;
\item an abelian $ t $-module $ E $ over $ K $;
\item a $ \Gal(K^\sep/K) $-equivariant homomorphism of $ k[ t ] $-modules
	$ u : X \to E( K^\sep ) $.
\end{enumerate}
A \emph{1-$t$-motif} is an effective $t$\dash motif $\tilde{M}$ that is an extension of an effective $t$-motif $ M $ which is finitely generated as a $ K[\sigma] $\dash module by an Artin  $t$-motif $ V $.
In \S \ref{algebraictheory} we show how the canonical isomorphism (\ref{basis}) implies that the category of $1$\dash $t$\dash modules is anti-equivalent with the category of $1$\dash $t$\dash motifs. Under this equivalence $ M $ is determined by $ E $, $ V $ by $ X $ and the extension class of $ \tilde{M} $ by $ u $, via (\ref{basis}). 

\bigskip

The second part of the paper provides an analytic interpretation of the extension group
$ \Ext^1(M,K[t]) $ in terms of Hodge structures of $t$-motifs. We need to assume 
that $ \i : k[t] \to K $ is injective ($ K $ has ``generic characteristic''), that
$ K $ is a local field with absolute value $ | \cdot | $ and that $ |\i(t)| > 1 $ ($ K $ has the ``$\infty$-adic topology''). Denote by $ C $ the completion of the algebraic closure of $ K $. 

Let $ M $ be an effective $ t $-motif over $ K $ which is finitely generated as
a $ K[\sigma] $\dash module and denote by $ E $ the associated abelian $ t $-module. Denote by
$ \Lie_E(C) $ the tangent space at zero to $ E_C $. This is naturally a $ C $\dash vector space
and by functoriality it is also a $ k[t] $-module. There is a unique holomorphic map
\[
	\exp_E : \Lie_E ( C ) \longrightarrow E( C ) 
\]
which is $ k[t] $-linear and tangent to the identity map $ \id : E( C ) \to E( C) $. The kernel of $\exp_E$ is a lattice, that is, a discrete and finitely generated $ k[t] $\dash submodule.
We say that  $ E $ is \emph{uniformizable} if $ \exp_E $ is surjective. This is equivalent 
with $ M $ being \emph{analytically trivial} (\emph{see} \S \ref{unifandhodge} for the definition.) Also the ``unit'' effective $ t $-motif $ K[t] $ (which does not correspond to an abelian $t$-module since it is not finitely generated as a $ K[\sigma] $\dash module) is analytically trivial and any extension of one analytically trivial effective $ t $-motif by another is analytically trivial (\emph{see} Proposition \ref{antrivext}.)

In \S \ref{unifandhodge} we define (a primitive version of) the Hodge structure $ H(M) $ of an analytically trivial effective $t$\dash motif $ M $, following Pink \cite{Pink97}. Also we relate the uniformization of an abelian $t$-module $ E $ to the Hodge structure $H( M )$ of the corresponding $t$\dash motif $ M $.

In \S \ref{transtheory} we show:

\begin{introtheorem} Let $ M $ be an effective $ t $-motif over $ K $, finitely generated 
over $ K[\sigma]$ and analytically trivial. The natural map
\[
	 \Ext^1_{C[t,\sigma]}( M_C, C[t] ) \longrightarrow \Ext^1( H(M_C), H(C[t]) ) 
\]
that maps the class of an extension of $ t $-motifs to the class of its Hodge structure
is an isomorphism of $k[t]$-modules.
\end{introtheorem}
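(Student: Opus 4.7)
The plan is to identify both $\Ext$ groups canonically with $E(C)$, the group of $C$-valued points of the abelian $t$-module associated to $M$, and then to verify that the natural map of the theorem matches these two identifications.

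\emph{Left-hand side.} Applying Theorem \ref{thm1} over the field $C$ to $M_C$ (which remains free and finitely generated over $C[t]$ and over $C[\sigma]$), one obtains the canonical isomorphism
\[
	\Ext^1_{C[t,\sigma]}(M_C, C[t]) \cong \Hom_{C[\sigma]}(M_C, C) = E(C).
\]

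\emph{Right-hand side.} Since $M$ is analytically trivial, $E$ is uniformizable, and the exponential map sits in a short exact sequence of $k[t]$-modules
\[
	0 \to \Lambda \to \Lie_E(C) \to E(C) \to 0,
\]
where $\Lambda = \ker \exp_E$ is the period lattice attached to $E$. I would then establish a Carlson-type formula for the category of Pink-Hodge structures developed in \S\ref{unifandhodge}: any extension $0 \to H(C[t]) \to \tilde H \to H(M_C) \to 0$ splits both after forgetting the Hodge filtration (because an analytically trivial extension is semisimple on the Betti/comparison side) and as a filtered module, and the class of $\tilde H$ is the difference of these two splittings. This difference lives in a Hom group that, using the comparison isomorphism of $H(M_C)$ and the one-dimensional character of $H(C[t])$, identifies naturally with $\Lie_E(C)$ modulo the subgroup $\Lambda$ of classes that already lift to a splitting over the Betti lattice. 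This yields a canonical isomorphism
\[
	\Ext^1(H(M_C), H(C[t])) \cong \Lie_E(C)/\Lambda = E(C),
\]
the last equality by uniformizability.

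\emph{Compatibility.} Given an extension $0 \to C[t] \to \tilde M \to M_C \to 0$, the left-hand side identification reads off a rational point by choosing a $C[\sigma]$-splitting of $\tilde M$ and evaluating its defect under the $t$-action, following the proof of Theorem \ref{thm1}. The right-hand side recipe, applied to the induced extension of Pink-Hodge structures, produces a class in $\Lie_E(C)/\Lambda$ by comparing a splitting over the Betti lattice with an analytic splitting. Both prescriptions measure, via the uniformization of $\tilde M$, the failure of the analytic splitting of $\tilde M$ to descend to $M$ and $C[t]$; the functoriality of $\exp_E$ then shows that the rational point on the left is exactly the image under $\exp_E$ of the tangent vector on the right. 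Tracing through the definitions of $H(-)$ and of the exponential completes the compatibility.

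The main obstacle I expect to face is the Carlson-type formula of the second step: one must set up the homological algebra of Pink-Hodge structures carefully enough to show that $\Ext^1$ in that category has no further obstruction beyond the $\Lie_E(C)/\Lambda$ term (e.g.~that extensions of the relevant Hodge filtrations are unobstructed because of the one-step nature of the filtration on $H(C[t])$), and to verify that the resulting classifying group matches $\Lie_E(C)/\Lambda$ compatibly with the analytic structure. Once this is in place, the remainder of the proof is a matching of definitions enabled by the fact that both sides are computed by lifting along $\exp_E$.
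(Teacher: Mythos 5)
Your identifications of the two sides with $E(C)$ are essentially the paper's own: the left-hand side is Theorem \ref{thm1} applied over $C$, and your Carlson-type computation of $\Ext^1(H(M_C),H(C[t]))$ as $\Lie_E(C)/\Lambda_E$ is what the paper carries out via the auxiliary Hodge structures $A$ and $B$, Proposition \ref{twohodges} and Lemma \ref{exthodgecal} (note that the splitting on the lattice side comes simply from $M\{t\}^{\sigma\tau}$ being free over $k[t]$, not from any ``semisimplicity on the Betti side''). So the part you single out as the main obstacle is in fact handled rather cleanly.

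The genuine gap is the compatibility step, which you dispose of with ``functoriality of $\exp_E$'' and ``tracing through the definitions''. This is precisely where the real work lies (Proposition \ref{pextext}). For a general point $x=\exp_E(\epsilon)\in E(C)$ the extension $\tilde M$ attached to $x$ is given algebraically by (\ref{explicitext}), whereas its class in $\Ext^1(H(M_C),\One)$ is computed from $\tilde M\{t\}^{\sigma\tau}$, i.e.\ from solutions of a $\tau$\dash difference equation in the Tate algebra; there is no formal reason why these solutions are expressible through $\epsilon$ and $\exp_E$, and no definition-chase produces the identity $h=u$. The paper proves the agreement only on the $t$\dash power torsion of $E(C)$ (Lemma \ref{torsion}), and even there it is an honest computation resting on Anderson's residue formula for the series $\beta_N$ (Lemma \ref{andres}). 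It is then propagated to all of $E(C)$ by a rigidity argument: $h$ is shown to be locally analytic in the extension class, the difference $h\circ\exp_E-u\circ\exp_E$ is a locally analytic additive function vanishing on the image of $k\llp t^{-1}\rrp^r$, such a function is shown to be locally constant (Lemma \ref{locan}), and an arbitrary $\epsilon$ is finally scaled into the resulting neighbourhood using that $t^{p^n}$ and $\th^{p^n}$ agree on $\Lie_E(C)$ for large $n$. Your proposal contains no substitute for any of these steps; as it stands it establishes two abstract isomorphisms $\Ext^1(M_C,C[t])\cong E(C)$ and $\Ext^1(H(M_C),H(C[t]))\cong E(C)$, but not that their composite is the natural map of the theorem.
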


This is an analogue of Deligne's theorem \cite[\S 10]{Deligne74b} on the equivalence between 1\dash motifs and Hodge structures of type $\{(0,0),(0,-1),(-1,0),(-1,-1)\}$, with the
exception that we do not say anything about which Hodge structures occur as $ H( M ) $ for some
analytically trivial effective $ t $-motif $ M $, finitely generated over $ K[\sigma] $. This appears to be a difficult problem.  

\smallskip

{\emph{Acknowledgment.} The author would like to thank the anonymous referee for pointing out a mistake in a previous version of Proposition \ref{PR}.}

\section{Duality for torsion modules over $ k\llb z \rrb $}
\label{torsionduality}

Let $ k $ be a field and $ k\llb z \rrb $ the ring
of power series in a single variable $ z $ over $ k $. This section recalls some easy facts about finitely generated torsion modules over $ k\llb z \rrb $. Not only will we use some of these facts later on, but we will also carry out a number of constructions that are directly inspired by them. 

Denote by $ k\llp z \rrp $ the field of Laurent series in $ z $.
If $ \alpha : M \to N $ is a morphism of $ k\llb z \rrb $-modules,
then we write $ \alpha_\circ $ for the induced map
$ M \otimes k\llp z \rrp \to N \otimes k\llp z \rrp $.
We denote by $ \Res_{z=0} $ the residue map from $ k\llp z \rrp $ to $ k $ that
maps a Laurent series to its coefficient of $ z^{-1} $. (So we have silently identified
$ \hat{\Omega}_{k\llp z \rrp/k} $ with $ k\llp z \rrp $ by choosing the generator $dz$.)

Let 
\begin{equation}\label{short}
	0
	\longrightarrow
	F
	\overset{ \alpha }{ \longrightarrow }
	G
	\overset{ \pi }{ \longrightarrow }
	T
	\longrightarrow
	0
\end{equation}
be a short exact sequence of finitely generated
$ k\llb z \rrb $-modules, with $ T $ a torsion module and $ F $
and $ G $ free. It follows that $ \alpha_\circ $ is an isomorphism. This
short exact sequence induces a short exact sequence of
$ k\llb z \rrb $-modules
\begin{equation}\label{dualshort}
	0
	\longrightarrow 
	\Hom_{k\llb z \rrb} ( G, k\llb z \rrb )
	\overset{ \alpha^t }{ \longrightarrow }
	\Hom_{k\llb z \rrb} ( F, k\llb z \rrb )
	\longrightarrow
	\Hom_k( T, k )
	\longrightarrow
	0
\end{equation}
where the surjection is given by
\[
	\varphi
	\longmapsto 
	\left[
		\pi( g ) \mapsto 
		\Res_{z=0} \varphi_\circ( \alpha_\circ^{-1}( g ) )
	\right].
\]

The short exact sequence (\ref{dualshort}) can be identified with
\[
	0
	\longrightarrow 
	\Hom_{k\llb z \rrb} ( G, k\llb z \rrb )
	\longrightarrow 
	\Hom_{k\llb z \rrb} ( F, k\llb z \rrb )
	\longrightarrow
	\Ext^1_{k\llb z \rrb} ( T, k\llb z \rrb )
	\longrightarrow
	0,
\]
the long exact sequence of cohomology obtained by applying $ \Hom( -, k\llb z \rrb ) $ to (\ref{short}). Also, the residue map yields an isomorphism of $ k\llb z \rrb $-modules
\[
	 \Hom_{k\llb z \rrb}( T, k\llp z \rrp/k\llb z \rrb )
	 \overset{\sim}{\longrightarrow}
	 \Hom_k( T, k )\colon f \mapsto \Res_{z=0} \circ f.
\]
Alternatively, one can directly obtain an isomorphism
\[
	\Hom_{k\llb z \rrb}( T, k\llp z \rrp/k\llb z \rrb ) \overset{\sim}{\longrightarrow} 
	\Ext^1_{k\llb z \rrb} ( T, k\llb z \rrb )
\]
by applying $\Hom( T, - ) $ to the injective resolution
\[
	0 \longrightarrow k\llb z \rrb \longrightarrow k\llp z \rrp
		\longrightarrow k\llp z \rrp/k\llb z \rrb \longrightarrow 0
\]
of $k\llb z \rrb$.

\section{Effective $t$-motifs and abelian $t$-modules}\label{sectmottmod}

In this section we define effective $t$\dash motifs, abelian $t$\dash modules and
recall Anderson's correspondence \cite{Anderson86} between certain
effective $t$-motifs and abelian $t$-modules. \index{effective $t$-motive}
\index{abelian $t$-modules}

We fix a finite field $ k $ of $ q $ elements. 

\subsection{Effective $ t $-motifs}\label{tmotifs}

Let $ R $ be a commutative ring containing $ k $ and $ R[ t ] $ the polynomial ring in
one variable $ t $ over $ R $. Denote by $ \tau : R[t]\to R[t] $ the ring endomorphism
that restricts to the $q$\dash th power Frobenius endomorphism on $ R $ and that fixes $ t $.
If $ M $ is an $ R[t] $\dash module then we define the pull-back of $ M $ along $ \tau $
as follows:
\[
	M' := \tau^\ast M = M \otimes_{R[t],\tau} R[t].
\]
We give $ M' $ the structure of an $ R[t] $\dash module through the second factor in the tensor product. There is a canonical $k[t]$\dash linear map from $ M $ to $ M' $, which we denote by $ \tau $. It is given by
\[
	\tau: M \to M' : m \mapsto m\otimes 1.
\]

\begin{definition}
A \emph{$ \sigma $-module} over $ R[t] $ is a pair $ ( M, \sigma ) $ of an
$ R[t] $-module $ M $ and an $ R[t] $\dash linear map $ \sigma: M' \to M $.
A morphism of $ \sigma $\dash modules is a morphism $ f: M_1 \to M_2 $ of $ R[t] $\dash modules
such that $ f \circ \sigma_1 = \sigma_2 \circ f' $. 
\end{definition}

We will usually suppress the $ \sigma $ from the notation and write 
$ M $ for a $ \sigma $-module $ ( M, \sigma ) $.

The category of $ \sigma $-modules is abelian and $ k[t] $\dash linear. We denote the
$ k[t] $\dash module of morphisms $ M_1 \to M_2 $ by
\[
	\Hom_{R[t],\sigma}\!\big( M_1, M_2 \big).
\]

\begin{remark} Let $ R[\sigma] $ denote the ring whose elements are polynomials 
$\sum x_i \sigma^i $ in $ \sigma $
but where multiplication is defined through $ \sigma x = x^q \sigma $ for all $ x \in R $. Note 
that $ k $ is central in $ R[\sigma] $. Write $ R[t,\sigma] $ for the tensor product
$ k[t] \otimes_k R[\sigma] $. Let $ ( M, \sigma_M ) $ be a $\sigma$\dash module over $ R[t] $.
Then $ M $ has naturally the structure of a left $ R[t,\sigma] $\dash module through
$ \sigma m := \sigma_M( \tau(m) ) $. In fact, this construction defines an isomorphism
of categories between the category of $ \sigma $\dash modules over $ R[t] $ and the category
of left modules over $ R[t,\sigma] $.  We will use the language of $ \sigma $\dash modules
and the language of $ R[t,\sigma ] $\dash modules interchangeably, each time
choosing the most convenient for the task at hand. There is considerable abuse of notation
in denoting the linear map $ \sigma : M' \to M $ and the semi-linear action of
$ \sigma \in R[t,\sigma] $ on $ M $ by the same symbol $\sigma$. We hope this will not lead to
confusion.
\end{remark}

If $ ( M_1, \sigma_1 ) $ and $ ( M_2, \sigma_2 ) $ are $ \sigma $-modules
then we define their tensor product to be the $ \sigma $-module
$ ( M_1 \otimes_{R[t]} M_2, \sigma_1 \otimes \sigma_2 ) $.
Similarly one can define symmetric and exterior powers. In particular,
given a $ \sigma $-module $ M $ whose underlying module is locally free
of some constant rank $ r $ then one can consider its determinant
$ \det( M ) := \wedge^r M $ which is a $ \sigma $-module that is locally free of rank one.

If $ R \to S $ is a $ k $-algebra homomorphism and $ M $ a
$ \sigma $-module over $ R $ then we denote by $ M_S $ the
$ \sigma $-module over $ S $ obtained by extension of scalars.

Now let $ K $ be a field containing $ k $ and fix a $ k $-algebra homomorphism $ \i : k[t] \to K $. We denote the image of $ t $ by $ \th \in K $. One should think of $ \i $ as an analogue to the canonical homomorphism from $ \ZZ $ to any commutative ring. From now on we shall always consider the field $ K $ as a $ k[t] $\dash algebra, so we will silently consider the distinguished element $ \i(t)=\th \in K $ to be part of the data when referring to ``$K$''.

\begin{definition}
An \emph{effective $ t $-motif\footnote{
The terminology used here is that of \cite{Taelman09a}.  What is called a ``$t$-motive'' in
\cite{Anderson86} would be an ``effective $t$\dash motif that is finitely generated as a 
$K[\sigma]$\dash module'' in our language.} of rank $ r $} over $ K $ is a
$ \sigma $-module $ M $ over $ K $ whose underlying module is free of rank $ r $ and
such that the cokernel of the linear map $ \sigma : M' \to M $ is
annihilated by some power of $ t-\th $.
\end{definition}

The condition on $ \sigma $ is equivalent with the condition that 
$ \det ( M ) $ is isomorphic with the $ \sigma $-module
$ ( K[t]e, \tau e \mapsto \alpha(t-\th)^ne ) $ for some 
$ \alpha \in K^\times $ and $ n \geq 0 $.

If $ \lambda $ is a maximal ideal of $ k[t] $ then we write $ k[t]_\lambda $ for the
$ \lambda $-adic completion of $ k[t] $ and $ k(t)_\lambda $ for its quotient field.
For every $ \lambda$ the $ k[t]_\lambda $\dash module  
\[
	T_\lambda( M ) :=
	\left( M \otimes_{K[t]} K^\sep[t]_\lambda \right)^{\sigma\tau}
\]
of $\sigma\tau$\dash invariants carries a continuous action of
$ \Gal( K^\sep / K ) $ and
\[
	V_\lambda( M ) :=
	T_\lambda( M ) \otimes_{ k[t]_\lambda } k(t)_\lambda
\]
is a $ k(t)_\lambda $-vector space with a continuous action
of $ \Gal( K^\sep / K ) $.

The following proposition gives some justification for the ``motivic'' terminology (which is Anderson's), but we stress that we are merely dealing with an \emph{analogy}; there is no 
known direct relation with any kind of algebro-geometric motifs.

\begin{proposition}[Thm 3.3 of \cite{Gardeyn01}]\label{strcmp}
Assume that $ \i $ is injective and that $ K $ is finite over its subfield $ k(\th) $.
Let $ M $ be an effective $ t $-motif over $ K $ of rank $ r $.
Then $ \dim V_\lambda( M ) = r $ for all $ \lambda $. Moreover,
there exists a finite set $ S $ of places of $ K $
such that
\begin{enumerate}
	
	\item for every place $ v \notin S $ and for all
	non-zero prime ideals $ \lambda $ coprime with
	$ \i^\ast v $
	the representation $ V_\lambda( M ) $ is unramified
	at $ v $;
	
	\item for these $ \lambda $ and $ v $ the
	characteristic polynomial of Frobenius at $ v $
	has coefficients in $ k[t] $ and is independent of $ \lambda $.
\qed
\end{enumerate}
\end{proposition}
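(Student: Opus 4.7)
The plan is to imitate the classical construction of lisse $\ell$-adic sheaves from a smooth proper variety of good reduction: establish the rank of the Tate module by a Lang-type fixed-point computation, spread $M$ out to an integral model over almost all places, and read off the Frobenius characteristic polynomial from the reduction. For assertion (1), the injectivity of $\i$ makes $\th = \i(t) \in K$ transcendental over $k$, so $P_\lambda(\th) \neq 0$ for every monic irreducible $P_\lambda \in k[t]$ generating $\lambda$; hence $t-\th$ is a unit in $K^\sep[t]_\lambda = K^\sep \otimes_k k[t]_\lambda$, and the hypothesis that $\coker(\sigma)$ is killed by a power of $t-\th$ makes $\sigma$ an isomorphism on $M \otimes_{K[t]} K^\sep[t]_\lambda$. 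A standard Artin--Schreier / Lang--Steinberg argument over the separably closed field $K^\sep$ then produces enough solutions of $\sigma\tau(x) = x$ to give an isomorphism $T_\lambda(M) \otimes_{k[t]_\lambda} K^\sep[t]_\lambda \overset{\sim}{\longrightarrow} M \otimes_{K[t]} K^\sep[t]_\lambda$; thus $T_\lambda(M)$ is free of rank $r$ over $k[t]_\lambda$ and $\dim V_\lambda(M) = r$.

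For (2), fix a $K[t]$-basis of $M$ and let $A$ be the $r \times r$ matrix of $\sigma$ over $K[t]$; write $\det A = c \cdot (t-\th)^N$ with $c \in K^\times$ and $N \geq 0$. Take $S$ to be the finite set of places of $K$ at which some entry of $A$, of its inverse over $K[t][(t-\th)^{-1}]$, or the scalar $c$, fails to be a $v$-adic unit. For $v \notin S$ and $\lambda$ coprime to $\i^\ast v$, the same data define an integral model $\mathcal{M}$ over $\Oc_v[t]$ whose $\sigma$-cokernel is still killed by $(t-\th)^N$. Running the argument of (1) over the strict henselization $\Oc_v^{\mathrm{sh}}[t]_\lambda$ shows that the $\sigma\tau$-fixed points of $\mathcal{M} \otimes_{\Oc_v[t]} \Oc_v^{\mathrm{sh}}[t]_\lambda$ already form a free $k[t]_\lambda$-module of rank $r$, hence agree with $T_\lambda(M)$ under the natural inclusion; consequently the inertia $I_v \subset \Gal(K^\sep/K)$ acts trivially.

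For (3), reduce $\mathcal{M}$ modulo the maximal ideal of $\Oc_v$ to obtain a free $\sigma$-module $\overline{\mathcal{M}}$ of rank $r$ over $k_v[t]$, whose matrix for $\sigma$ is the reduction of $A$. With $n = [k_v:k]$ the iterate $\sigma^n$ becomes $k_v[t]$-linear with matrix $B = A \cdot A^\sigma \cdots A^{\sigma^{n-1}}$, the superscripts meaning entry-wise application of the iterated Frobenius on $k_v$. On the Tate module, the identities $\sigma\tau = \id$ and $\Frob_v = \tau^n$ on $\overline{k_v}$ force $\Frob_v = (\sigma^n)^{-1}$, so $\det(T-\Frob_v \mid V_\lambda(M))$ equals, up to the standard reciprocal transformation, $\det(T \cdot I - B)$ --- patently an element of $k_v[t][T]$ independent of $\lambda$. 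Finally, applying $\sigma$ entry-wise to the defining product for $B$ and using $A^{\sigma^n} = A$ yields $\sigma(B) = A^\sigma A^{\sigma^2} \cdots A^{\sigma^{n-1}} A = A^{-1} B A$, so $\det(T \cdot I - B)$ is $\sigma$-invariant in its coefficients and therefore lies in $k_v[t]^{\sigma}[T] = k[t][T]$.

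The main technical obstacle is (2): the comparison of $T_\lambda(M)$ with the $\sigma\tau$-fixed points computed over $\Oc_v^{\mathrm{sh}}[t]_\lambda$ requires lifting solutions of the Artin--Schreier equation $\sigma\tau(x) = x$ from the residue field $\overline{k_v}$ back integrally, which in turn demands that $\sigma$ be an isomorphism over $\Oc_v[t][(t-\th)^{-1}]$ --- precisely what the choice of $S$ secures. Everything else is either the Lang--Steinberg input from (1) or the short conjugation-invariance calculation in (3).
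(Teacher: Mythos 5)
The paper itself contains no proof of this proposition: it is quoted, with the proof omitted, from Gardeyn (Thm.~3.3 of the cited paper), so there is no internal argument to compare yours against. Your sketch reconstructs the standard good-reduction proof, which is also the shape of Gardeyn's argument via models of $\tau$-sheaves, and its three pillars are sound: injectivity of the structure map makes $t-\theta$ a unit in $K^\sep[t]_\lambda$ for every $\lambda$, so $\sigma$ becomes an isomorphism there and semilinear Lang--Steinberg descent gives $\dim V_\lambda(M)=r$; an integral model with invertible $\sigma$ away from $t-\theta$ exists outside a finite set of places; and the conjugation identity $\tau(B)=A^{-1}BA$ for $B=A\,\tau(A)\cdots\tau^{n-1}(A)$ correctly forces the Frobenius characteristic polynomial to lie in $k[t]$ and to be independent of $\lambda$.

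The step you state as automatic but which is not is ``the fixed points over $\Oc_v^{\mathrm{sh}}[t]_\lambda$ form a free $k[t]_\lambda$-module of rank $r$, hence agree with $T_\lambda(M)$'': a rank-$r$ free $k[t]_\lambda$-submodule of the rank-$r$ module $T_\lambda(M)$ need not be all of it (e.g.\ $\lambda T_\lambda(M)$), and moreover in the completed ring $\Oc_v^{\mathrm{sh}}[t]_\lambda$ an element that is a unit modulo $\mathfrak{m}_v$ need not be a unit, so even the invertibility of $\sigma$ there requires care. Both problems disappear if you argue at finite levels: for $v\notin S$ and $\lambda$ prime to the prime of $k[t]$ below $v$, the scheme of solutions of $\sigma\tau(x)=x$ in $\mathcal{M}\otimes_{\Oc_v[t]}\Oc_v[t]/\lambda^n$ is finite flat over $\Oc_v$ of degree $\#\big(k[t]/\lambda^n\big)^r$ and \'etale, because $\det A=c(t-\theta)^N$ is a unit in the finite $\Oc_v$-algebra $\Oc_v[t]/\lambda^n$ (there, unit modulo $\mathfrak{m}_v$ does suffice). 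A finite \'etale $\Oc_v$-scheme has all of its $K_v^\sep$-points defined over $K_v^{\mathrm{un}}$ with unramified Galois action, and these points reduce bijectively to the $\bar{k}_v$-points; passing to the limit over $n$ gives simultaneously the unramifiedness of $T_\lambda(M)$ and its Frobenius-equivariant identification with the fixed points of the reduction $\overline{\mathcal{M}}$, which is exactly what your computation of the characteristic polynomial consumes. Two pieces of bookkeeping: $S$ must also contain the places above $t=\infty$ (otherwise $\theta\notin\Oc_v$ and the model does not exist), and the arithmetic-versus-geometric Frobenius (reciprocal-polynomial) normalization should be fixed once and for all, though it does not affect the statement.
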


In other words: the $ V_\lambda( M ) $ form a ``strictly compatible system'' of Galois representations \`{a} la Serre \cite{Serre68}. \index{compatible system of Galois representations}

\begin{example}
Assume that $\i $ is injective and that $ K $ is finite over $ k(\th) $.
Let $ C $ be the \emph{Carlitz} $ t $-motif over $ K $. This is the rank
one effective $ t $\dash motif given by
\[
	C = \big( K[ t ] e,\, \tau e \mapsto ( t - \th ) e \big).
\]
Let $ v $ be a finite place of $ K $ (\emph{i.e.} $ v $ does not lie
above the place $ \th = \infty $ of $ k( \th ) \subset K $.) 
Let $ f \in k[ \theta ] $ be
a monic generator of the ideal in $ k[ \theta ] $ corresponding to
the norm of $ v $ in $ k( \theta ) \subset K $. One verifies 
that
\begin{enumerate}
	\item the representation $ V_\lambda( C ) $ is unramified at
		$ v $ for all $ \lambda $ coprime with $ \i^\ast v $;
	\item for such $ \lambda $ we have that $ \Frob_v $ acts
		as $ f( t )^{ -1 } \in k( t ) $.
\end{enumerate}
So $ C $ plays the role of the Lefschetz motif $ \ZZ( -1 ) $.
\end{example}

\subsection{Abelian $ t $-modules}\label{tmodules}

Let $ ( M, \sigma ) $ be a $ \sigma $-module over $ K $.
Now consider the functor
\[
	E_M \colon
	\{  K \text{-algebras} \} \to 
	\{  k[t] \text{-modules} \} \colon
	R \mapsto \Hom_{ K[\sigma] }( M, R ),
\]
where $ R $ is a left $ K[\sigma] $\dash module through $\sigma r = r^q $.
This functor is representable by an affine $ k[t] $-module scheme over $ K $, which
is not necessarily of finite type over $ K $.

Conversely, given a $ k[t] $-module scheme $ E $ over $ K $ define 
\[
	M_E := \Hom_{\text{gr.sch.}/K}( E, \GG_a ),
\]
which is naturally a left $ K[ t ] $-module.  The $q$\dash th power Frobenius 
map $ \tau : \tau^\ast\GG_a \to \GG_a $ induces a linear map $ \sigma: M'_E \to M_E $ which
makes $ M_E $ into a $ \sigma $\dash module over $ K[t] $.

\begin{proposition}[\S 1 of \cite{Anderson86}, \S 10 of \cite{Stalder07}]\label{modmot}
The functors $ M \mapsto E_M $ and $ E \mapsto M_E $ form a pair of
quasi-inverse anti-equivalences between the categories of 
effective $ t $\dash motifs $ M $ over
$ K $ that are finitely generated as left $ K[ \sigma ] $-modules
and the category of $ k[t] $-module schemes $ E $ over $ K $ that satisfy
\begin{enumerate}
	\item for some $ d \geq 0 $ the group schemes
	$ E_{\bar{K}} $ and $ \GG_{a,\bar{K}}^d $ are isomorphic;
	\item $ t - \th $ acts nilpotently on $ \Lie( E ) $;
	\item $ M_E $ is finitely generated as a $ K[ t ] $-module.
\end{enumerate}
These anti-equivalences commute with base change $ K \to K' $.\qed
\end{proposition}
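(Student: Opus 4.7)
The plan is to reduce everything to a single structural lemma: \emph{any effective $t$-motif $M$ over $K$ that is finitely generated as a $K[\sigma]$-module is in fact free of finite rank over $K[\sigma]$, of rank $d = \dim_K\coker(\sigma\colon M'\to M)$.} This is the technical heart; once it is available, the rest is essentially formal. I would attack the lemma first by a Nakayama-type argument: since $\coker(\sigma)$ is both finitely generated over $K[t]$ and annihilated by $(t-\th)^n$, it is finite-dimensional over $K$; fixing a $K$-basis and lifting to elements $e_1,\dots,e_d\in M$, one shows inductively---exploiting the $K[t]$-finiteness of $M$ together with the injectivity of $\sigma$ and its invertibility after inverting $t-\th$---that the $e_i$ freely generate $M$ over $K[\sigma]$. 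The bookkeeping is delicate because $\sum K[\sigma]e_i$ is a priori only a $K[\sigma]$-submodule of $M$, not a $K[t,\sigma]$-one.

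With the lemma in hand, the forward functor is straightforward. From $M\cong K[\sigma]^d$ we read off $E_M(R)=R^d$, so $E_M\cong\GG_a^d$ as $K$-schemes, verifying (1). A dual-numbers computation with $R=K[\epsilon]/\epsilon^2$, using that $\sigma$ kills $\epsilon$, identifies $\Lie(E_M)$ with the $K$-linear dual of $\coker(\sigma\colon M'\to M)$ as a $K[t]$-module, whence (2) follows from the effective $t$-motif hypothesis on $M$. The canonical evaluation map $M\to M_{E_M}=\Hom_{\text{gr.sch.}/K}(\GG_a^d,\GG_a)\cong K[\sigma]^d$ is visibly a $K[t,\sigma]$-linear isomorphism, which proves condition (3) and gives one direction of the quasi-inverse identification.

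For the opposite functor, start with $E$ satisfying (1)--(3) and set $M_E=\Hom_{\text{gr.sch.}/K}(E,\GG_a)$ with its natural $K[t,\sigma]$-structure. By (1) and faithfully flat descent along $K\to\bar K$, one has $M_E\otimes_K\bar K\cong\bar K[\sigma]^d$; hence $M_E$ is finitely generated over $K[\sigma]$. Condition (3) gives finite generation over the PID $K[t]$, and freeness then reduces to torsion-freeness, which one extracts from the injectivity of $\sigma$ on $M_E$ (a consequence of (1) combined with (2) after extending scalars). The $(t-\th)$-power torsion of $\coker(\sigma\colon M'_E\to M_E)$ is dual to (2) via the same $\Lie$ computation as above, completing the verification that $M_E$ is an effective $t$-motif. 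The canonical map $E\to E_{M_E}$ is then shown to be an isomorphism by reducing to the model case $E=\GG_a^d$ over $\bar K$, where both sides identify canonically; descent concludes. Base-change compatibility along $K\to K'$ is immediate from the definitions of both functors.

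The main obstacle is the freeness lemma: it is the only step that genuinely intertwines the three hypotheses on an effective $t$-motif (finite generation over $K[\sigma]$, freeness over $K[t]$, and nilpotence of $t-\th$ on $\coker\sigma$); every other assertion reduces to functorial bookkeeping once this rigidification is in place.
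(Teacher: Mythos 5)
The paper itself offers no argument for this proposition: it is quoted with a \qed and references to \S 1 of Anderson and \S 10 of Stalder, and your overall architecture (reduce everything to a freeness statement over $K[\sigma]$, then do functorial bookkeeping) is indeed the architecture of those proofs. The problem is your proof of the central lemma. It is simply not true that lifts of a $K$-basis of $\coker(\sigma\colon M'\to M)$ generate $M$ over $K[\sigma]$, so the Nakayama-style induction you propose cannot get started. Concretely, take the Carlitz motif $C=K[t]e$ with $\sigma\tau e=(t-\theta)e$. Then $\coker\sigma$ is one-dimensional over $K$, spanned by the image of $e$, and $\tilde e:=(1+t-\theta)e$ is a legitimate lift of that basis vector, since $\tilde e-e\in (t-\theta)K[t]e=\sigma C'$. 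But $\sigma^i\tilde e=(1+t-\theta^{q^i})(t-\theta^{q^{i-1}})\cdots(t-\theta)e$ has $t$-degree exactly $i+1$, so every element of $K[\sigma]\tilde e$ equals $P(t)e$ with $P$ a $K$-linear combination of monic polynomials of degrees $1,2,3,\dots$; no nonzero constant can occur, hence $e\notin K[\sigma]\tilde e$ and $K[\sigma]\tilde e\subsetneq C$. The underlying reason is that $K[\sigma]\sigma$ is not contained in any radical for which a Nakayama argument is available: $Q=K$ with $\sigma$ acting as $x\mapsto x^q$ is a nonzero cyclic $K[\sigma]$-module equal to the $K$-span of $\sigma Q$. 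So the cokernel of $\sigma$ computes the $K[\sigma]$-rank only \emph{after} freeness is known; it cannot be used to manufacture a $K[\sigma]$-basis.

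The lemma itself is true, but the standard proof (Anderson, and Stalder in the generality stated here) runs differently: one first shows $M$ has no $K[\sigma]$-torsion --- if $fm=0$ with $f\neq 0$ then $K[\sigma]m$ is finite-dimensional over $K$; the torsion elements form a $K[t,\sigma]$-submodule, finitely generated over the left Noetherian ring $K[\sigma]$, hence finite-dimensional over $K$, hence zero because $M$ is free over $K[t]$ --- and then invokes the structure theory of finitely generated torsion-free left modules over the left principal ideal domain $K[\sigma]$ (right division algorithm) to conclude freeness; only afterwards is the rank identified with $\dim_K\coker\sigma$ and the rest of your formal reductions goes through. Two further soft spots in your converse direction deserve mention: the proposition only assumes $E_{\bar K}\cong\GG_{a,\bar K}^d$, so over a non-perfect $K$ there is a genuine form/descent issue to address (this is precisely what the citation of Stalder's \S 10 is for, and your appeal to faithfully flat descent glosses over the non-central base change $K[\sigma]\to\bar K[\sigma]$); and your claim that $K[t]$-torsion-freeness of $M_E$ follows from ``injectivity of $\sigma$'' is not an implication --- one needs that $f(\phi_t)$ is surjective as a group-scheme endomorphism for every nonzero $f\in k[t]$ (or an equivalent use of hypotheses (1) and (2)), which requires its own argument.
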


\begin{definition}
A $ k[t] $-module scheme $ E $ satisfying the above three conditions
is called an \emph{abelian $ t $-module} of dimension $ d $. An 
abelian $ t $-module of dimension one is called a \emph{Drinfeld module}.
\end{definition}

The tangent space at the identity of $ E $ can be expressed
in terms of $ M_E $ as follows: \index{tangent space at zero}
\begin{proposition}[\emph{see} \cite{Anderson86}]\label{LieE}
$ \Lie_E( K ) = \Hom_K( M_E/\sigma M'_E, K ) $ as $ K[[ t-\th ]] $\dash modules.
\end{proposition}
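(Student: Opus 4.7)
The plan is to compute $\Lie_E(K)$ from the functor-of-points description $E(R) = \Hom_{K[\sigma]}(M_E, R)$ via the standard dual-numbers recipe. By definition, $\Lie_E(K)$ is the kernel of the reduction map $E(K[\epsilon]/(\epsilon^2)) \to E(K)$, and so it consists of those $K[\sigma]$-module homomorphisms $\phi \colon M_E \to K[\epsilon]/(\epsilon^2)$ that vanish modulo $\epsilon$.

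Any such $\phi$ necessarily has the form $\phi(m) = \epsilon\, g(m)$ for a unique $K$-linear map $g \colon M_E \to K$. I would then translate the $\sigma$-equivariance of $\phi$ into a condition on $g$. Since $\sigma$ acts on the target $R = K[\epsilon]/(\epsilon^2)$ as the $q$-th power Frobenius and $\epsilon^q = 0$ (using $q \geq 2$), the identity $\phi(\sigma m) = \phi(m)^q$ collapses to $g(\sigma m) = 0$ for every $m \in M_E$; equivalently, $g$ annihilates the image of the linear map $\sigma \colon M_E' \to M_E$. This provides a $K$-linear bijection $\Lie_E(K) \cong \Hom_K\!\big( M_E/\sigma M_E', K \big)$.

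It remains to check that this bijection is compatible with the $K[[t-\theta]]$-module structures. The action of $t \in k[t]$ on $\Lie_E(K) \subset E(K[\epsilon])$ is induced from functoriality of $E$ and comes down to precomposition by multiplication by $t$ on $M_E$; the $K$-action is likewise precomposition through $K \hookrightarrow K[\epsilon]$. The very same recipes define the $K[t]$-action on $\Hom_K( M_E/\sigma M_E', K )$, so the bijection is $K[t]$-linear. The effective $t$-motif hypothesis forces $t-\theta$ to act nilpotently on the cokernel $M_E/\sigma M_E'$, and hence on both sides, whence the $K[t]$-action extends uniquely to a $K[[t-\theta]]$-action matching on both sides.

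The main obstacle is essentially only notational: one must keep carefully distinct the two uses of $\sigma$ (the $K[t]$-linear map $\sigma \colon M_E' \to M_E$ and the semi-linear endomorphism of $M_E$) and verify that ``$g$ vanishes on $\sigma M_E'$'' in the linear sense is the same as ``$g(\sigma m)=0$ for all $m \in M_E$'' in the semi-linear sense. Once these identifications are unwound, the rest is a direct verification.
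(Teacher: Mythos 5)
Your argument is correct, and in fact the paper offers no proof of Proposition \ref{LieE} at all: it simply refers to \cite{Anderson86}, where the isomorphism is obtained in the dual formulation, by sending $\varphi\in M_E=\Hom(E,\GG_a)$ to its differential at the origin; since Frobenius has vanishing differential this kills $\sigma M_E'$ and identifies $M_E/\sigma M_E'$ with $\Hom_K(\Lie_E(K),K)$. Your route through the points functor $E(R)=\Hom_{K[\sigma]}(M_E,R)$ with $R=K[\epsilon]/(\epsilon^2)$ is the mirror image of that computation and is equally valid: $\phi=\epsilon g$ is $K[\sigma]$-linear exactly when $g$ is $K$-linear and $g(\sigma m)=0$ for all $m$, because $\epsilon^q=0$. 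The step you flag at the end does require one small observation to go through (also when $K$ is imperfect): the image of the linear map $\sigma\colon M_E'\to M_E$ is the $K$-span of $\{\sigma(\tau(m)):m\in M_E\}$, because $M_E'$ is $K$-spanned by $\tau(M_E)$ (indeed $m\otimes t^j=\tau(t^j m)$); with this, ``$g$ vanishes on $\sigma M_E'$'' and ``$g(\sigma m)=0$ for all $m$'' agree for a merely $K$-linear $g$. The module compatibilities are as you say: $t$ acts by precomposition on both sides, the $K$-actions match, and $t-\th$ acts nilpotently on $M_E/\sigma M_E'$ by the definition of an effective $t$-motif (and on $\Lie_E$ by the definition of an abelian $t$-module), so the $K[t]$-isomorphism extends uniquely to $K[[t-\th]]$.
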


Also the Galois representations associated with $ M_E $
can be expressed in terms of $ E $. If $ \lambda = ( f ) \subset k[t] $
is a non-zero prime ideal then define the $ \lambda $-adic
Tate module of $ E $ to be
\[
	V_\lambda( E ) :=
	( \varprojlim_n E[ f^n ]( K^\sep ) )
	\otimes_{ k[t]_\lambda } k(t)_\lambda.
\]
If $ M $ is the effective $ t $-motif associated with $ E $ then we have
\begin{proposition}
$ V_\lambda( M ) = \Hom ( V_\lambda( E ), k(t)_\lambda ) $. \qed
\end{proposition}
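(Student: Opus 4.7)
The plan is to use Anderson's anti-equivalence $E(R)=\Hom_{K[\sigma]}(M,R)$ to realise $T_\lambda E$ in terms of $\sigma$-module maps out of $M$ and then construct a canonical evaluation pairing with $T_\lambda M$.

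First I would identify, via the equivalence, $E[f^n](K^\sep)=\Hom_{K[\sigma]}(M,K^\sep)[f^n]=\Hom_{K[\sigma]}(M/f^nM,K^\sep)$, using that the $k[t]$-action on $\Hom_{K[\sigma]}(M,K^\sep)$ is transported from the source. Passing to the inverse limit along the transition maps $\phi_{n+1}\mapsto f\phi_{n+1}$ realises $T_\lambda E$ as $\Hom_{K[t,\sigma]}(M,\mathcal{F}_\lambda)$, where $\mathcal{F}_\lambda:=K^\sep\otimes_k(k(t)_\lambda/k[t]_\lambda)$ carries the $K[t,\sigma]$-module structure in which $\sigma$ acts as $\tau$ on $K^\sep$, $t$ acts on the second factor, and $K$ acts on $K^\sep$. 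This is the $t$-motivic analogue of the classical description $V_\ell A=\Hom_{\mathrm{cts}}(\mathbf{Q}_\ell/\mathbf{Z}_\ell,A(K^\sep))$.

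Next I would construct the pairing. For a $\sigma$-fixed $\mu\in M\otimes_{K[t]}K^\sep[t]_\lambda$ representing a class in $T_\lambda M$ and $\phi\in\Hom_{K[t,\sigma]}(M,\mathcal{F}_\lambda)=T_\lambda E$, the evaluation $(\phi\otimes\id)(\mu)$ lives a priori in $\mathcal{F}_\lambda\otimes_{K[t]}K^\sep[t]_\lambda$. Unwinding this target and using that $\mu$ is $\sigma$-fixed and $\phi$ is $\sigma$-equivariant forces the image to be fixed by the $q$-th power Frobenius; hence it lands in the Frobenius-invariant subspace $k(t)_\lambda/k[t]_\lambda$. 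After inverting $f$ this defines the pairing $V_\lambda M\times V_\lambda E\to k(t)_\lambda$, manifestly $k(t)_\lambda$-bilinear, $\Gal(K^\sep/K)$-equivariant, and independent of representatives.

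Finally I would verify perfectness by a rank argument. Assuming $\lambda$ is coprime to the characteristic, $t-\th$ is a unit in $K[t]_\lambda$ and $\sigma:M'\to M$ becomes an isomorphism after base-change to $K^\sep[t]_\lambda$. This makes $M\otimes K^\sep[t]_\lambda$ an \'etale $\sigma$-module of rank $r$; by Galois descent for \'etale $\sigma$-modules, $T_\lambda M$ is a free $k[t]_\lambda$-module of rank $r$ with $M\otimes K^\sep[t]_\lambda=T_\lambda M\otimes_{k[t]_\lambda}K^\sep[t]_\lambda$. Choosing a $k[t]_\lambda$-basis of $T_\lambda M$, the pairing above exhibits $T_\lambda E$ as its $k[t]_\lambda$-dual, establishing both $\rk_{k[t]_\lambda}T_\lambda E=r$ and perfectness simultaneously. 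The main obstacle is the unpacking in Step two --- making precise why $(\phi\otimes\id)(\mu)$ is Frobenius-invariant and therefore valued in $k(t)_\lambda/k[t]_\lambda$ rather than in the larger $K^\sep$-tensored target --- which requires carefully separating the semi-linear $\sigma$-action on $M$ from the $\tau$-action on $K^\sep[t]_\lambda$ and ultimately rests on the \'etaleness of $M\otimes K^\sep[t]_\lambda$ as a $\sigma$-module.
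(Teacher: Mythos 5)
The paper states this proposition without proof (it is quoted as a known consequence of Anderson's theory), so your argument has to stand on its own; unfortunately Step one contains a genuine error that the later steps inherit. The module $\Hom_{K[t,\sigma]}(M,\mathcal{F}_\lambda)$ with $\mathcal{F}_\lambda=K^\sep\otimes_k\bigl(k(t)_\lambda/k[t]_\lambda\bigr)$ cannot be $T_\lambda E$: since $M$ is finitely generated over $K[t]$ and $\mathcal{F}_\lambda$ is the union of its $f^n$-torsion submodules, every $K[t]$-linear map $M\to\mathcal{F}_\lambda$ is killed by a power of $f$, so this Hom-module is $\lambda$-power torsion, whereas $T_\lambda E=\varprojlim_n E[f^n](K^\sep)$ is $f$-torsion free (indeed free of rank $r$ over $k[t]_\lambda$ away from the characteristic). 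What your inverse limit of $\Hom_{K[\sigma]}(M/f^nM,K^\sep)$ along the maps you describe actually computes is $\Hom_{K[\sigma]}\bigl(M\otimes_{k[t]}(k(t)_\lambda/k[t]_\lambda),\,K^\sep\bigr)$: because $E(-)=\Hom_{K[\sigma]}(M,-)$ is contravariant in $M$, the divisible module $k(t)_\lambda/k[t]_\lambda$ must be tensored into the source, not into the target. The module you wrote down, $\Hom_{K[t,\sigma]}(M,\mathcal{F}_\lambda)$, is instead the divisible torsion module $\varinjlim_n E[f^n](K^\sep)$, i.e.\ the analogue of $A[\ell^\infty]$ rather than of $T_\ell A$.

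This misidentification breaks Step two at the decisive point: even granting the Frobenius-invariance argument, your evaluation takes values in $k(t)_\lambda/k[t]_\lambda$, and the phrase ``after inverting $f$'' cannot upgrade this to a $k(t)_\lambda$-valued pairing on $V_\lambda M\times V_\lambda E$, because every element of $k(t)_\lambda/k[t]_\lambda$ is killed by a power of $f$, so localizing the target at $f$ annihilates it. Two repairs are available. Either work level by level: identify $E[f^n](K^\sep)=\Hom_{K[\sigma]}(M/f^nM,K^\sep)$ with the $k[t]/(f^n)$-dual of $\bigl(M\otimes_{K[t]}K^\sep[t]/(f^n)\bigr)^{\sigma\tau}$, both free of rank $r$ over $k[t]/(f^n)$ by exactly the \'etaleness/descent argument you invoke in Step three, and only then pass to the inverse limit and invert $f$. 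Or keep your torsion-valued pairing of $T_\lambda(M)$ against $\varinjlim_n E[f^n](K^\sep)$ and apply Matlis/Pontryagin-type duality over $k[t]_\lambda$, using $\varinjlim_n E[f^n](K^\sep)\cong T_\lambda E\otimes_{k[t]_\lambda}k(t)_\lambda/k[t]_\lambda$, to land in $\Hom(T_\lambda E,k[t]_\lambda)$. Your Step three (the prime-to-characteristic hypothesis, \'etaleness of $M\otimes K^\sep[t]_\lambda$, and freeness of $T_\lambda M$ of rank $r$) is sound and is the standard ingredient, but as written the perfectness claim rests on a pairing that has not actually been constructed.
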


\subsection{Weights and Dieudonn\'{e} $t$-modules}

In this section we recall the definition of the weights of an effective $t$\dash motif 
\cite{Taelman09a}.
This will allow for a more useful equivalent description of the ``finitely generated over $ K[\sigma] $'' condition.

 As usual $k$ is a finite field of $q$ elements and
 $K$ a field containing $k$. Denote by $\tau$ the continuous
 endomorphism of the field of Laurent series $K\llp t^{-1}\rrp$ that
 fixes $t^{-1}$ and that restricts to the $q$\dash th power 
 map on $K$. Note that $ K\llp t^{-1}\rrp^\tau = k\llp t^{-1}\rrp $. If $ V $
 is a $K\llp t^{-1}\rrp$\dash vector space then we denote the pull-back
 by $ \tau $ of $ V $ by $ V' $.
 \begin{definition} \index{Dieudonn\'{e} $t$-module}
  A \emph{Dieudonn\'{e} $t$\dash module}
  over $K$ is a pair $(V,\sigma)$ consisting of
  \begin{enumerate}
   \item a finite-dimensional $K\llp t^{-1}\rrp$\dash vector space $V$
         and
   \item a $ K\llp t^{-1}\rrp$\dash linear isomorphism $\sigma:V'\to V$.
  \end{enumerate}
  A morphism of Dieudonn\'{e} $t$\dash module is
  a  $K\llp t^{-1}\rrp$\dash linear map compatible with $\sigma$.
 \end{definition}
 
 Just as with $\sigma$\dash modules one can also consider Dieudonn\'{e} $t$\dash module 
 as modules over the skew polynomial ring $ K\llp t^{-1}\rrp [\sigma] $ whose elements are
 polynomials in $\sigma$ and where $\sigma t = t \sigma $ and $ \sigma x = x^q \sigma $.

 Dieudonn\'{e} $t$\dash modules over a separably closed field admit a simple
 classification. The main `building blocks' are the
 following modules:
 \begin{definition}
  Let $\mu=s/r$ be a rational number with
  $(r,s)=1$ and $r>0$. The 
  Dieudonn\'{e} $t$\dash module $V_\mu$ is defined to be the
  pair $(V_\mu,\sigma)$ with 
  \begin{enumerate}
   \item $V_\mu := K\llp t^{-1}\rrp e_1\oplus\ldots\oplus
          K\llp t^{-1}\rrp e_r$
   \item $\sigma \tau e_i := e_{i+1}$ ($i<r$) and
	  $\sigma \tau e_r := t^se_1$
  \end{enumerate}
 \end{definition}
 
 \begin{proposition}\label{dieuclass}
 If $ K $ is separably closed then the category of Dieudonn\'{e} $t$\dash modules over
 $ K $ is semi-simple. The simple objects are
  the $ V_\mu $ with $ \mu \in \QQ $ and $V_\mu\cong V_\nu$ if and only if $ \mu = \nu $.
 \end{proposition}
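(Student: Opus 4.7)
The proposition is the function-field analogue of Dieudonn\'{e}-Manin's classification of $F$-isocrystals, and I would prove it by the same strategy. The argument has three stages: (a) the $V_\mu$ are pairwise non-isomorphic simple Dieudonn\'{e} $t$-modules; (b) every non-zero Dieudonn\'{e} $t$-module contains a sub-object isomorphic to some $V_\mu$; (c) over a separably closed $K$ every short exact sequence of Dieudonn\'{e} $t$-modules splits. Combined, (a)--(c) yield the semi-simplicity and the classification.

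For stage (a), I would first verify the identity $\sigma^r e_i = t^s e_i$ in $V_\mu$, which follows from the defining relations and the fact that $\tau$ fixes $t$. Any $\sigma$-equivariant endomorphism $f$ of $V_\mu$ is then determined by $v := f(e_1)$, which must satisfy $\sigma^r v = t^s v$. Writing $v = \sum a_i e_i$ and using the semi-linearity of $\sigma$, this becomes $a_i^{\tau^r} = a_i$, forcing each $a_i$ to lie in the subfield of $K$ of cardinality $q^r$ (which exists because $K$ is separably closed). One checks directly that $\End(V_\mu)$ is a division algebra, whence $V_\mu$ is simple. For non-isomorphism when $\mu = s/r \neq s'/r' = \nu$, a non-zero morphism $V_\mu \to V_\nu$ yields $0 \neq v \in V_\nu$ with $\sigma^r v = t^s v$; computing $\sigma^{rr'}v$ two ways (using $\sigma^r v = t^s v$ on the one hand and $\sigma^{r'} f_j = t^{s'} f_j$ on the other) produces an equation $b_j^{\tau^{rr'}} = t^{sr'-s'r} b_j$ on the coefficients of $v$, and since $\tau$ preserves the $t^{-1}$-adic valuation, this forces $sr' = s'r$, a contradiction.

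For stage (b), given $0 \neq v \in V$ the iterates $v, \sigma v, \sigma^2 v, \ldots$ span a finite-dimensional $K\llp t^{-1}\rrp$-subspace of $V$, so $v$ is annihilated by some monic skew polynomial $P(\sigma) \in K\llp t^{-1}\rrp\{\sigma\}$ of minimal degree. The Newton polygon of $P$ with respect to the $t^{-1}$-adic valuation factors $P$ into factors of single slope, using separable closedness of $K$ to extract the necessary roots at each break point. The slope-$s/r$ factor produces a $\sigma$-stable subspace of $V$ on which $\sigma^r$ acts as multiplication by a scalar of $t^{-1}$-adic valuation $-s$, and a further change of basis---again exploiting separable closedness to normalize this scalar to $t^s$---brings this subspace into the standard form $V_{s/r}$.

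Stage (c) is the main obstacle. By (a), (b) and induction on dimension, semi-simplicity reduces to splitting every short exact sequence $0 \to V_\mu \to \tilde V \to V_\nu \to 0$. In an adapted $K\llp t^{-1}\rrp$-basis the $\sigma$-action on $\tilde V$ is block upper-triangular, and the existence of a $\sigma$-equivariant section becomes a matrix equation $\sigma X - X \Phi = \Psi$ with $\Phi$ and $\Psi$ known. When $\mu \neq \nu$, a contraction argument in the $t^{-1}$-adic topology---applied to $\sigma$ or to $\sigma^{-1}$ according to the sign of $\mu - \nu$---yields a solution. The delicate case is $\mu = \nu$, where one must argue inductively on the $t^{-1}$-adic filtration, solving an Artin--Schreier equation over $K$ at each step and verifying that the resulting sequence converges; this is precisely where separable closedness of $K$ enters in an essential way, and is the main technical content of the theorem.
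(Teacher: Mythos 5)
Your three-stage plan (simplicity and pairwise non-isomorphism of the $V_\mu$, existence of a copy of some $V_\mu$ inside any nonzero module, vanishing of extensions) is exactly the Dieudonn\'{e}--Manin strategy, and this is in substance what the paper's proof amounts to: the paper simply cites Appendix~B of Laumon and observes that those arguments use nothing beyond separable closedness of $K$. So the route is the right one, but as written your sketch has two genuine gaps beyond the expected amount of hand-waving.

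First, in stage (a) the inference ``$\End(V_\mu)$ is a division algebra, whence $V_\mu$ is simple'' is a non sequitur at the point where you invoke it: in an abelian category an indecomposable object can have a division ring of endomorphisms without being simple (already a non-split extension of two non-isomorphic simple objects typically has endomorphism ring a field), and semi-simplicity of the category is not yet available --- indeed your reduction in stage (c) needs only (b), so the order could be repaired, but then simplicity of $V_{s/r}$ must either be proved directly (a valuation/slope count on a nonzero subobject, where $(r,s)=1$ is essential) or deduced only after semi-simplicity is established. Relatedly, the solutions of $a^{\tau^r}=a$ in $K\llp t^{-1}\rrp$ form $\mathbf{F}_{q^r}\llp t^{-1}\rrp$, not the finite subfield $\mathbf{F}_{q^r}$ of $K$; what you are describing is the cyclic algebra of degree $r$ over $k\llp t^{-1}\rrp$, and the verification that it is division is precisely where coprimality of $r$ and $s$ enters, which ``one checks directly'' conceals. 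Second, and more seriously, in stage (b) the step from ``slope-$s/r$ factor of the Newton polygon of $P$'' to ``a $\sigma$-stable subspace on which $\sigma^r$ acts as multiplication by a scalar of the right valuation'' is not a formal consequence of any slope factorization: a pure factor only yields a vector annihilated by a single-slope twisted polynomial of some degree $d$ (in general $d>r$), and manufacturing from this an actual eigenvector $w$ with $\sigma^r w = u\,t^s w$, $u$ a unit, is the heart of the Dieudonn\'{e}--Manin theorem; it requires a successive-approximation argument over the separably closed field, and the slope factorization for twisted polynomials itself needs proof. By contrast, the final normalization (solving $\tau^r(\lambda)=u\lambda$ coefficientwise via separable equations) and stage (c) (contraction for distinct slopes, coefficientwise Artin--Schreier equations for equal slopes, with $t^{-1}$-adic convergence automatic) are fine in outline. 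So the proposal is the correct skeleton, but its central lemma is asserted rather than proven, and the simplicity argument needs to be rearranged or replaced.
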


 Note that this classification is formally identical to
 the classification of the classical ($p$\dash adic) Dieudonn\'{e}
 modules \cite{Dieudonne57}.

 \begin{proof}
  This is shown in \cite[Appendix B]{Laumon96}. Although
  the statements therein are made only for a particular field
  $K$, nowhere do the proofs make use of anything stronger
  then the separably closedness of $K$.
 \end{proof}

 Let $ V / K $ be a Dieudonn\'{e} $t$\dash module. Then
 by the above Proposition there exist rational numbers
 $\mu_1,\ldots,\mu_n$ such that
 \[
 	V_{K^\sep} \cong  V_{\mu_1} \oplus\cdots\oplus
                         V_{\mu_n},
 \]
 where $ V_{K^\sep} $ denotes the completed tensor product
 $ V \hat{\otimes}_K K^\sep $. 
 We call these rational numbers the \emph{weights} of $V$. 
 If $ M/K $ is a $ \sigma $\dash module which is finitely generated over $ K[t] $ then 
 \[
 	M\llp t^{-1}\rrp := M \otimes_{K[t]} K\llp t^{-1}\rrp
 \]
 is naturally a Dieudonn\'{e} $t$\dash module and we define
 the weights of $ M $ to be the weights of $ M\llp t^{-1}\rrp $. \index{weight}
  
\begin{proposition}\label{fingen}
 Let $ M $ be a $ \sigma $\dash module, free and finitely generated
 as $ K[t] $\dash module.
 Then $ M $ is finitely generated as $ K[\sigma] $\dash module if and only if
 $ M $ is a $\sigma$-module and all weights of $ M $ are positive.
\end{proposition}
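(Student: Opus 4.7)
\emph{Plan.} Both sides of the equivalence descend from $K^\sep$ to $K$: the weights are defined using $M\otimes K^\sep$, and $K[\sigma]$-finite generation satisfies faithfully flat descent along $L[\sigma]=L\otimes_K K[\sigma]$ for any finite separable $L/K$ (this is a free, hence faithfully flat, ring extension). So I may assume $K$ is separably closed. Put $M_\infty := M\otimes_{K[t]} K\llp t^{-1}\rrp$. The decomposition $M_\infty = \bigoplus V_{\mu_i}$ of Proposition \ref{dieuclass} is available exactly when $\sigma_\infty$ is an isomorphism, i.e.\ when $\det\sigma\neq 0$ in $K[t]$. Under the positive-weights hypothesis this is automatic, and under the $K[\sigma]$-finite-generation hypothesis it follows because the images of a finite $K[\sigma]$-generating set span $M/\sigma M'$ over $K$, forcing $M/\sigma M'$ to be finite-dimensional and hence $K[t]$-torsion. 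So in both directions I work with the decomposition.

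\emph{Direction $(\Rightarrow)$.} Write $M = K[\sigma]W$ with $W\subset M$ finite-dimensional over $K$. Fix a slope $\mu_j = s/r$ (with $\gcd(r,s)=1$, $r>0$) and let $\pi\colon M_\infty \to V_{\mu_j}$ be the slope projection. Since $\pi$ is $\sigma$-equivariant, $\pi(M)=K[\sigma]\pi(W)$. On $V_{\mu_j}$ with its standard basis $f_1,\ldots,f_r$ satisfying $\sigma f_i = f_{i+1}$ for $i<r$ and $\sigma f_r = t^s f_1$, one computes directly that the leading $t$-degree of a Laurent series is changed under $\sigma$ by at most $\max(0,s)$. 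If $s\leq 0$ then $K[\sigma]\pi(W)$ has $t$-degree bounded by $\deg_t\pi(W)$. But $\pi(M)$ spans $V_{\mu_j}$ over $K\llp t^{-1}\rrp$, and multiplication by $K\llp t^{-1}\rrp$ cannot increase $t$-degree, so $\pi(M)$ must have unbounded $t$-degree—a contradiction. Hence every $\mu_j > 0$.

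\emph{Direction $(\Leftarrow)$.} Reorder the slopes $\mu_1 < \cdots < \mu_s$, set $F_j := \bigoplus_{i \leq j} V_{\mu_i}$, and put $M_j := M \cap F_j$. Each $M_j$ is $\sigma$-stable and saturated in $M$, since $M/M_j$ embeds into the $K[t]$-torsion-free module $M_\infty / F_j$; a rank count gives $M_j \otimes K\llp t^{-1}\rrp = F_j$, so $M_j/M_{j-1}$ is a free $K[t]$-module whose associated Dieudonné $t$-module is isoclinic of slope $\mu_j > 0$. Because extensions of $K[\sigma]$-finitely generated modules are again $K[\sigma]$-finitely generated, induction on $s$ reduces to the isoclinic case of slope $\mu = s/r$. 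There $\sigma^r = t^s$ holds on $V_\mu^{\oplus m}$, hence on $M$. Choosing any $K[t]$-generators $n_1,\ldots,n_d$ of $M$, the finite set
\[
\{\,t^i \sigma^b n_l : 0 \leq i < s,\ 0 \leq b < r,\ 1 \leq l \leq d\,\}
\]
generates $M$ over $K[\sigma]$: iterating $\sigma^r = t^s$ (and using $\sigma t = t\sigma$) produces $t^{i+as}\sigma^b n_l$ for every $a\geq 0$, which as $i,a$ vary exhausts $\sum_b K[t]\sigma^b n_l$; this contains $\sum_l K[t]n_l = M$.

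\emph{Main obstacle.} The real content sits in the isoclinic case of $(\Leftarrow)$: one must extract the algebraic identity $\sigma^r = t^s$ from the classification, transport it to the $K[t]$-lattice $M$, and observe that positivity $s>0$ is precisely what enables the missing powers $t,t^2,\ldots,t^{s-1}$ to be recovered by adjoining finitely many $K[t]$-multiples of the generators. The remaining steps—descent, the slope filtration with its saturation and rank count, and the $t$-degree contradiction in $(\Rightarrow)$—are then routine.
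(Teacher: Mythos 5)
Your proposal has a genuine gap, and it sits exactly where the real content of the proposition lies. (Note that the paper does not prove this statement at all: it cites Theorem 5.3.1 of \cite{Taelman09a} for ``if'' and Proposition 8 of \cite{Taelman09b} for ``only if'', so your argument has to stand on its own.) In the direction $(\Leftarrow)$ your d\'evissage rests on the claim that $M_j:=M\cap F_j$ satisfies $M_j\otimes K\llp t^{-1}\rrp=F_j$. No rank count gives this, and it is false in general: the slope decomposition of $M_\infty$ is an analytic object over $K\llp t^{-1}\rrp$ and need not meet the $K[t]$-lattice $M$ in full rank (the embedding $M/M_j\hookrightarrow M_\infty/F_j$ bounds nothing, since $K\llp t^{-1}\rrp$ has infinite degree over $K(t)$). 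Concretely, let $K$ be algebraically closed, $\alpha\in K$ transcendental over $k$, and $M=K[t]e_1\oplus K[t]e_2$ with $\sigma\tau e_1=t^2e_1$ and $\sigma\tau e_2=te_2+\alpha t^2e_1$; the weights are $1$ and $2$, both positive. The slope-$1$ component $F_1\subset M\llp t^{-1}\rrp$ is the line spanned by $e_2+xe_1$, where $x$ is the unique solution in $K\llp t^{-1}\rrp$ of $x=t\tau(x)+\alpha t$, namely $x=\sum_{n\geq 0}(-\alpha)^{q^{-n-1}}t^{-n}$. If $x$ were in $K(t)$ its coefficients would eventually satisfy a $K$-linear recurrence, forcing a nonzero additive polynomial $\sum_i\gamma_i z^{q^i}$ to vanish at the infinitely many distinct elements $(-\alpha)^{q^{-n}}$; hence $x\notin K(t)$ and $M\cap F_1=0$. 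So your filtration does not cut $M$ into isoclinic pieces (this is precisely why the weight filtration of a $t$-motif is in general not realised by sub-$\sigma$-modules, and why the published proof argues by degree estimates on all of $M_\infty$ at once). Moreover, even granting the reduction, the isoclinic step fails: $\sigma^r$ is $\tau^r$-semilinear, $\sigma^r(ae_i)=\tau^r(a)\,t^se_i$, so the identity $\sigma^r=t^s$ holds only on the $k\llp t^{-1}\rrp$-span of the standard basis vectors, where the $K[t]$-generators $n_l$ of $M$ have no reason to lie; hence the asserted generation of $M$ by $\{t^i\sigma^b n_l\}$ is not established. The whole ``if'' direction therefore remains unproved.

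The descent step and the direction $(\Rightarrow)$ are essentially sound, but the final contradiction is misjustified: multiplication by elements of $K\llp t^{-1}\rrp$ (for instance by $t$) certainly can increase $t$-degree, and a bounded-degree subset can perfectly well span over $K\llp t^{-1}\rrp$. The correct (and simpler) point is that $\pi$ is $K[t]$-linear, so $\pi(M)$ is a nonzero $K[t]$-submodule of the isotypic component and contains $t^n\pi(m)$ for all $n\geq 0$ with $\pi(m)\neq 0$; thus $\pi(M)$ has unbounded $t$-degree, contradicting the bound on $K[\sigma]\pi(W)$ when the slope is $\leq 0$. With that one-line repair, and with the routine verification that finite generation ascends and descends along the free (hence faithfully flat) extension $K[\sigma]\subset K^{\sep}[\sigma]$, the ``only if'' half of your argument goes through; the ``if'' half needs a genuinely different argument.
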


\begin{proof}
The ``if'' part is shown in Theorem 5.3.1 of \cite{Taelman09a}, the ``only if'' part in Proposition 8 of \cite{Taelman09b}.
\end{proof}

\begin{corollary}
The functors $ M \mapsto E_M $ and $ E \mapsto M_E $ form a pair of
quasi-inverse anti-equivalences between the categories of 
effective $ t $\dash motifs $ M / K $ whose weights are positive and the category 
of abelian $ t $-modules over $ K $.
\end{corollary}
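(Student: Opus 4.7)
The plan is to observe that this corollary is essentially a bookkeeping exercise, obtained by splicing together Proposition \ref{modmot} and Proposition \ref{fingen}. Proposition \ref{modmot} already states that the functors $M \mapsto E_M$ and $E \mapsto M_E$ give quasi-inverse anti-equivalences between the category of effective $t$-motifs over $K$ that are \emph{finitely generated as $K[\sigma]$-modules} and the category of abelian $t$-modules. So the only thing to check is that the source category in that statement is the same as the one in the corollary, namely effective $t$-motifs all of whose weights are positive.

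This identification is precisely the content of Proposition \ref{fingen}. By definition an effective $t$-motif is a $\sigma$-module which is free and finitely generated as a $K[t]$-module, so Proposition \ref{fingen} applies directly: such an $M$ is finitely generated over $K[\sigma]$ if and only if all its weights are positive. Morphisms in the two categories agree tautologically, since in both cases one just takes $K[t,\sigma]$-linear maps. Hence the two subcategories of effective $t$-motifs coincide.

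Combining these two observations gives the corollary without further work. There is no genuine obstacle here; the only thing to be slightly careful about is that the ``finitely generated over $K[\sigma]$'' hypothesis in Proposition \ref{modmot} is used both to define $E_M$ as a finite-type group scheme and to make sense of the conditions defining an abelian $t$-module, and one should check that this is the same condition appearing in the hypothesis of Proposition \ref{fingen}. This is immediate from the definitions, so the proof reduces to a single sentence citing the two propositions.
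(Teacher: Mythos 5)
Your proof is correct and is exactly the argument the paper intends: the corollary is stated without proof precisely because it is the immediate combination of Proposition \ref{modmot} (Anderson's anti-equivalence for effective $t$-motifs finitely generated over $K[\sigma]$) with Proposition \ref{fingen} (the equivalence of that finiteness condition with positivity of the weights). Your remark that the morphisms and the finiteness hypotheses match up tautologically is the only checking needed, so nothing is missing.
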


Finally we prove a useful fact about extensions of Dieudonn\'{e} $t$-modules over arbitrary fields. 

\begin{proposition}\label{dieuext}
Let $ V  $ and $ W $ be Dieudonn\'{e} $ t $-modules over $ K $. If $ V $ and $ W $ have no weights in common then $ \Hom( V, W ) = \Ext^1( V, W ) = 0 $ in the category of Dieudonn\'{e} $t$-modules over $ K $.
\end{proposition}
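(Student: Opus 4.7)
The plan is to reduce to the case where the base field is separably closed, where Proposition \ref{dieuclass} makes the category semisimple, and then descend back to $K$.

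For the vanishing of $\Hom(V,W)$, I would first base change to $K^\sep$. Since $K^\sep\llp t^{-1}\rrp$ is faithfully flat over $K\llp t^{-1}\rrp$ and the $\sigma$-structure is preserved, it is enough to show $\Hom_{K^\sep}(V_{K^\sep}, W_{K^\sep}) = 0$. By Proposition \ref{dieuclass}, $V_{K^\sep}$ and $W_{K^\sep}$ decompose as direct sums of the simple objects $V_\mu$ indexed by the weights of $V$ and $W$ respectively. Since by hypothesis these two sets of weights are disjoint, and since the $V_\mu$ are pairwise non-isomorphic simples in a semisimple abelian category, all $\Hom(V_\mu,V_\nu)$ vanish for $\mu\neq\nu$, and the desired vanishing follows.

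For $\Ext^1(V,W) = 0$, I would take an arbitrary extension $0 \to W \to E \to V \to 0$ of Dieudonn\'{e} $t$-modules over $K$ and base change to $K^\sep$. The base-changed sequence splits because the category over $K^\sep$ is semisimple. Moreover, the set of splittings $s : V_{K^\sep}\to E_{K^\sep}$ is a torsor under $\Hom_{K^\sep}(V_{K^\sep}, W_{K^\sep})$, which we have just shown to be zero. So the splitting $s$ is unique. The natural continuous semi-linear action of $\Gal(K^\sep/K)$ on $E_{K^\sep}$, coming from its action on the scalars $K^\sep\llp t^{-1}\rrp$, carries splittings to splittings, so by uniqueness $s$ is Galois-equivariant. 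Galois descent then yields a splitting of the original extension over $K$.

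The main obstacle is keeping the descent step honest: one must verify that the semi-linear Galois action on $V_{K^\sep}$, $W_{K^\sep}$, and $E_{K^\sep}$ really does recover $V$, $W$, and $E$ as the invariants, that it is compatible with the $\sigma$-structure, and that a Galois-invariant $K^\sep\llp t^{-1}\rrp$-linear map descends to a $K\llp t^{-1}\rrp$-linear map. These facts reduce to the observation that $\Gal(K^\sep/K)$ acts on $K^\sep\llp t^{-1}\rrp$ coefficientwise with fixed field $K\llp t^{-1}\rrp$, combined with standard Hilbert~90-type descent for finite-dimensional vector spaces; none of this interacts with $\sigma$ in a delicate way, so once set up it is routine.
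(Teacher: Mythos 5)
Your argument is correct, and its skeleton (base change to $K^\sep$, invoke the semisimple classification of Proposition \ref{dieuclass}, then descend) is the same as the paper's; the $\Hom$ part is essentially verbatim the paper's proof. Where you genuinely diverge is the descent step for $\Ext^1$: the paper regards an extension $U$ of $V$ by $W$ as a $K$-form of the split module $V_{K^\sep}\oplus W_{K^\sep}$, classified by the nonabelian Galois cohomology set ${\mathrm H}^1\!\big(\Gal(K^\sep/K),\Aut(V_{K^\sep}\oplus W_{K^\sep})\big)$, and uses the disjointness of weights to factor $\Aut(V_{K^\sep}\oplus W_{K^\sep})=\Aut(V_{K^\sep})\times\Aut(W_{K^\sep})$, whence the splitting over $K$; you instead observe that the splittings of the base-changed sequence form a torsor under $\Hom(V_{K^\sep},W_{K^\sep})=0$, so the splitting is unique, hence $\Gal(K^\sep/K)$-equivariant, and descends directly. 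Both are valid; your route is more elementary and self-contained (no forms formalism or nonabelian ${\mathrm H}^1$, and it produces the splitting of the original extension directly, rather than leaving implicit the passage from ``$U$ is a form with cocycle in the product group'' to ``the extension splits''), while the paper's is terser for a reader who has the forms machinery at hand. The descent bookkeeping you flag is indeed routine here: $V_{K^\sep}$ is the completed tensor product, i.e.\ a finite-dimensional $K^\sep\llp t^{-1}\rrp$-vector space, the Galois action is coefficientwise with $\big(K^\sep\llp t^{-1}\rrp\big)^{\Gal(K^\sep/K)}=K\llp t^{-1}\rrp$, and the equivariant splitting is determined by its values on a $K\llp t^{-1}\rrp$-basis of $V$, so its matrix entries are Galois-invariant and lie in $K\llp t^{-1}\rrp$, with compatibility with $\sigma$ preserved throughout.
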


\begin{proof}
A non-zero morphism $ V \to W $ would induce a non-zero morphism
$ V_{K^\sep} \to W_{K^\sep} $ after base change, and such a morphism cannot exist
by Proposition \ref{dieuclass}.

If $ U $ is an extension of $ V $ by $ W $ then $ U_{K^\sep} $ splits by Proposition \ref{dieuclass}. So $ U $ is a form over $ K $ of the split extension
$ V_{K^\sep} \oplus W_{K^\sep} $ over $ K $. Such forms are classified by the Galois
cohomology group 
\[
	 {\mathrm H}^1\!\big( \Gal( K^\sep/K ), \Aut( V_{K^\sep} \oplus W_{K^\sep} ) \big)
\]
but since the weights of $ V $ and $ W $ are disjoint we have
\[
	\Aut( V_{K^\sep} \oplus W_{K^\sep} ) = \Aut( V_{K^\sep} ) \times \Aut( W_{K^\sep} ),
\]
and it follows that any extension of $ V $ by $ W $ splits over $ K $.
\end{proof}

\section{Algebraic theory of $1$-$t$-motifs}\label{algebraictheory}

\begin{theorem}[repeated from \S \ref{statements}]\label{thmext}
Let $ M $ be a left $ K[t,\sigma] $\dash module which is free and finitely
generated both as a $ K[t] $\dash module and as a $ K[\sigma] $\dash module. There is an
isomorphism of $ k[t] $-modules
\[
	 \Hom_{K[\sigma]}( M, K ) \longrightarrow \Ext^1_{K[t,\sigma]}( M, K[t] ),
\]
functorially in $ M $ and compatible with field extensions $ K \to K' $.
\end{theorem}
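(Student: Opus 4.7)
\smallskip

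The plan is to produce an explicit two-term free resolution of $M$ as a $K[t,\sigma]$-module, compute $\Ext^1_{K[t,\sigma]}(M,K[t])$ as the cokernel of the resulting map on $\Hom$-groups, and identify this cokernel with $\Hom_{K[\sigma]}(M,K)$.

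Set $A:=\Hom_{K[\sigma]}(M,K)$ with its natural $k[t]$-action $(t\phi)(m)=\phi(tm)$. Because $M$ is free as a $K[\sigma]$-module, the $K[t,\sigma]$-module $P:=K[t,\sigma]\otimes_{K[\sigma]}M$ is free, and the natural sequence
\[
	0\longrightarrow P\stackrel{d}{\longrightarrow} P\stackrel{\epsilon}{\longrightarrow} M\longrightarrow 0
\]
with $\epsilon(r\otimes m)=rm$ and $d$ the $K[t,\sigma]$-linear extension of $1\otimes m\mapsto t\otimes m-1\otimes tm$ is exact: surjectivity of $\epsilon$ is clear, exactness in the middle follows from the observation that any element of $P$ is congruent modulo $\im(d)$ to a unique element of the submodule $1\otimes M$, and injectivity of $d$ uses the torsion-freeness of $M$ over $K[t]$ provided by the freeness hypothesis.

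Applying $\Hom_{K[t,\sigma]}(-,K[t])$ and using the standard adjunction
\[
	\Hom_{K[t,\sigma]}(P,K[t])=\Hom_{K[\sigma]}(M,K[t])
\]
yields $\Ext^1_{K[t,\sigma]}(M,K[t])=\coker\bigl(d^\ast\bigr)$ with $d^\ast\colon\Hom_{K[\sigma]}(M,K[t])\to\Hom_{K[\sigma]}(M,K[t])$ the commutator $\Phi\mapsto t\Phi-\Phi\circ t$. Since $M$ is finitely generated over $K[\sigma]$, every $\Phi\in\Hom_{K[\sigma]}(M,K[t])$ has uniformly bounded $t$-degree, so there is a canonical isomorphism
\[
	A\otimes_k k[t]\overset{\sim}{\longrightarrow}\Hom_{K[\sigma]}(M,K[t]),\qquad a\otimes p\mapsto\bigl(m\mapsto p(t)\,a(m)\bigr),
\]
under which $d^\ast$ becomes the operator $a\otimes p\mapsto a\otimes tp-(ta)\otimes p$.

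The $k[t]$-linear multiplication map $\mu\colon A\otimes_k k[t]\to A$, $a\otimes p\mapsto p\cdot a$, is surjective and plainly annihilates the image of $d^\ast$; conversely, iterating the congruence $a\otimes t^j\equiv (ta)\otimes t^{j-1}\pmod{\im(d^\ast)}$ shows $a\otimes t^j\equiv (t^ja)\otimes 1$, whence $\ker(\mu)\subseteq\im(d^\ast)$. Thus $\mu$ descends to a $k[t]$-linear isomorphism $\Ext^1_{K[t,\sigma]}(M,K[t])\to A$, with inverse $\phi\mapsto [\iota\circ\phi]$ for $\iota\colon K\hookrightarrow K[t]$ the inclusion. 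Functoriality in $M$ and compatibility with field extensions $K\to K'$ are immediate, since every ingredient of the construction is natural. I expect the main fiddly point to be sorting out the two commuting $k[t]$-actions on $\Hom_{K[\sigma]}(M,K[t])$---one coming from the target $K[t]$, the other from the $t$-action on the source $M$---and verifying that they not only become identified in the cokernel of $d^\ast$ but descend via $\mu$ to the tautological $k[t]$-action on $A$.
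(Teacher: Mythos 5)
Your argument is correct, but it takes a genuinely different route from the paper. The paper realizes the extension group through the intermediary $\Hom_{K[t,\sigma]}(M,K\llp t^{-1}\rrp/K[t])$: the residue map identifies this with $\Hom_{K[\sigma]}(M,K)$, and the connecting map for $0\to K[t]\to K\llp t^{-1}\rrp\to K\llp t^{-1}\rrp/K[t]\to 0$ identifies it with $\Ext^1_{K[t,\sigma]}(M,K[t])$, the key input being the vanishing of $\Hom$ and $\Ext^1$ into $K\llp t^{-1}\rrp$, proved via Dieudonn\'e $t$-modules and the disjointness of weights (Propositions \ref{fingen} and \ref{dieuext}); this uses both the $K[t]$- and the $K[\sigma]$-finiteness hypotheses. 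You instead use the standard characteristic resolution $0\to K[t,\sigma]\otimes_{K[\sigma]}M\to K[t,\sigma]\otimes_{K[\sigma]}M\to M\to 0$ for the central variable $t$, compute $\Ext^1$ as the cokernel of $\Phi\mapsto t\Phi-\Phi\circ t$ on $\Hom_{K[\sigma]}(M,K[t])\cong A\otimes_k k[t]$ (where finite generation over $K[\sigma]$ is exactly what makes this graded decomposition legitimate), and identify the cokernel with $A$ by the telescoping congruence $a\otimes t^j\equiv(t^ja)\otimes 1$. This is more elementary --- no Laurent series, no weight theory, no Galois cohomology --- and it in fact never uses the hypothesis that $M$ is free over $K[t]$ (even your appeal to $K[t]$-torsion-freeness for injectivity of $d$ is unnecessary: injectivity follows from the leading-term argument in $P=\bigoplus_i t^i\otimes M$), so you prove a slightly more general statement. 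What the paper's route buys in exchange is the explicit Laurent-series description (\ref{explicitext}) of the extension attached to $\mu\in\Hom_{K[\sigma]}(M,K)$, which is what connects the theorem to Anderson--Thakur and is reused in the analytic part (Lemma \ref{torsion}); with your construction the extension appears instead as a pushout along $m\mapsto\mu(m)\in K\subset K[t]$, and one would still have to check agreement with (\ref{explicitext}) if one wanted to quote the later sections verbatim. Functoriality and base-change compatibility are indeed automatic in both approaches, since all ingredients are natural in $M$ and commute with $K\to K'$.
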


\begin{corollary}
If $ M $ is the effective $ t $-motif associated to an abelian $t$-module $ E $ 
we have a natural isomorphism of $ k[t] $-modules
$ E( K ) = \Ext^1_{K[t,\sigma]}( M, K[t] ) $.
\end{corollary}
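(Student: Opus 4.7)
The plan is to apply Theorem \ref{thmext} to $M$ and to combine its conclusion with the identification $\Hom_{K[\sigma]}(M,K) = E(K)$ built into the definition of $E_M$ (\emph{see} \P \ref{tmodules}). The hypotheses in Theorem \ref{thmext} that $M$ be free and finitely generated over $K[t]$ are part of the very definition of an effective $t$-motif, and finite generation over $K[\sigma]$ is exactly the condition singled out in Proposition \ref{modmot} as characterising those effective $t$-motifs that come from abelian $t$-modules. The only hypothesis that is not immediate is the \emph{freeness} of $M$ over $K[\sigma]$.

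To establish this freeness, I would use that $K[\sigma]$ is a non-commutative left principal ideal domain, over which every finitely generated torsion-free left module is free; so it suffices to verify that $M$ has no $K[\sigma]$-torsion. I would check this by base changing to an algebraic closure $\bar K$: by the first condition in the definition of an abelian $t$-module, $E \otimes_K \bar K \cong \GG_{a,\bar K}^d$, so $M \otimes_K \bar K$ is $\bar K[\sigma]$-isomorphic to $\bar K[\sigma]^d$, a manifestly torsion-free module. Since $\bar K/K$ is faithfully flat, the natural map $M \to M \otimes_K \bar K$ is injective, and any $K[\sigma]$-torsion relation in $M$ would survive this embedding and contradict the torsion-freeness of $\bar K[\sigma]^d$.

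With the hypotheses verified, Theorem \ref{thmext} supplies the natural $k[t]$-linear isomorphism
\[
	\Hom_{K[\sigma]}(M,K) \longrightarrow \Ext^1_{K[t,\sigma]}(M,K[t]),
\]
whose left-hand side is $E(K)$ by the definition of $E_M$. Naturality in $E$ and compatibility with field extensions follow from the corresponding statements in Theorem \ref{thmext} combined with the anti-equivalence $M \leftrightarrow E$ of Proposition \ref{modmot}, both of which are asserted to be compatible with base change $K \to K'$.

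The main (and essentially only) obstacle is the $K[\sigma]$-freeness check; the rest is formal bookkeeping. One can alternatively sidestep the descent argument by invoking the sharper statement from Anderson's original paper \cite{Anderson86} that $M_E$ is already known to be a free $K[\sigma]$-module of rank $\dim E$.
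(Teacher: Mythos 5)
Your proposal is correct and follows the same route as the paper: the corollary is obtained directly by combining Theorem \ref{thmext} with the identification $E(K)=\Hom_{K[\sigma]}(M,K)$ coming from the anti-equivalence of Proposition \ref{modmot}. Your verification that $M_E$ is free over the left principal ideal domain $K[\sigma]$ (via torsion-freeness after base change to $\bar K$, or by citing Anderson's result that $M_E$ is $K[\sigma]$-free of rank $\dim E$) correctly fills in the one hypothesis the paper leaves implicit.
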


\begin{proof}[Proof of Theorem \ref{thmext}]
We will produce the desired isomorphism as the composition
of two isomorphisms
\[
	\Hom_{K[\sigma]}( M, K )
	\longleftarrow
	\Hom_{ K[t,\sigma] }( M, K\llp t^{-1}\rrp/K[t] )
	\longrightarrow
	\Ext^1_{ K[t,\sigma] } ( M, K[t] ).
\]
	
The residue map (coefficient of $ t^{-1} $) defines a
natural isomorphism
\[
	\Hom_{K[\sigma]}( M, K )
	\stackrel{\Res}{\longleftarrow}
	\Hom_{ K[t,\sigma] }( M, K\llp t^{-1}\rrp/K[t] ),
\]
which is the first of the two isomorphisms.

To obtain the second isomorphism apply
 $ \Hom_{ K[t,\sigma] }( M, - ) $ to the short exact sequence
\[
	0 \to K[t] \to K((t^{-1})) \to K((t^{-1}))/K[t] \to 0,
\]
which yields a connecting homomorphism
\[
	\Hom_{ K[t,\sigma] }( M, K\llp t^{-1}\rrp/K[t] )
	\longrightarrow
	\Ext^1_{ K[t,\sigma] }( M, K[t] ).
\]
To show that it is an isomorphism it suffices to prove that
\[
	\Hom_{ K[t,\sigma] }( M, K\llp t^{-1}\rrp ) =
	\Ext^1_{ K[t,\sigma] }( M, K\llp t^{-1}\rrp ) = 0.
\]

Consider the $ k[t] $-linear map
\[
	\delta\colon \Hom_{K[t]}( M, K\llp t^{-1}\rrp )
	\to \Hom_{K[t]}( M',  K\llp t^{-1}\rrp )
	\colon 	f \mapsto f\circ \sigma - \tau \circ f.
\]
The kernel of $ \delta $ is $ \Hom_{ K[t,\sigma] }( M, K\llp t^{-1}\rrp ) $ and the cokernel
is $ \Ext^1_{ K[t,\sigma] }( M, K\llp t^{-1}\rrp ) $. Moreover, $ \delta $ 
coincides with the $ k\llp t^{-1}\rrp $-linear map
\begin{eqnarray*}
	\Hom_{K\llp t^{-1}\rrp}( M\llp t^{-1}\rrp, K\llp t^{-1}\rrp )
	&\longrightarrow&
	\Hom_{K\llp t^{-1}\rrp}( M'\llp t^{-1}\rrp,  K\llp t^{-1}\rrp ) \\
	f &\longmapsto& f\circ \sigma - \tau \circ f
\end{eqnarray*}
and its kernel and cokernel are  $ \Hom( M\llp t^{-1}\rrp, K\llp t^{-1}\rrp ) $ and
$ \Ext( M\llp t^{-1}\rrp, K\llp t^{-1}\rrp ) $  respectively (both in the category of Dieudonn\'{e} $t$-modules), which
vanish by Proposition \ref{dieuext}.
\end{proof}

\begin{remark}
Let $ \mu : M \to K $ be an element of
$ E_M(K) = \Hom_{K[\sigma]}( M, K )  $. Then the corresponding
extension $ 0 \to K[t] \to M_\mu \to M \to 0 $ can
be described explicitly as
\begin{equation}\label{explicitext}
	M_\mu :=
	\left\{ ( m, f ) \in M \times K\llp t^{-1}\rrp 
		:
		f - \sum_{i=0}^\infty \mu( t^i m ) t^{-i-1}
		\in K[t]
	\right\}.
\end{equation}
In case $ M $ is a tensor power of the Carlitz $t$-motif such a description is used in
\cite{Anderson90}.
\end{remark}

\begin{definition}
A $1$-$t$-module over $ K $ is a triple $ ( X, E, u ) $ consisting of
\begin{enumerate}
\item a free and finitely generated $ k[ t ] $-module
	$ X $ equipped with a continuous action of
	$ \Gal(K^\sep/K) $;
\item an abelian $ t $-module $ E $ over $ K $;
\item a $ \Gal(K^\sep/K) $-equivariant homomorphism of $ k[ t ] $-modules
	$ u : X \to E( K^\sep ) $.
\end{enumerate}
\end{definition}

We now construct a $ t $-motif $\tilde{M}$ associated with a
$ 1 $-$ t $-module $ ( X, E, u ) $. From $ X $
one builds the $ t $-motif
\[
	V := \Hom_{ k[t] } ( X, K^\sep[ t ] )^{\Gal(K^\sep/K)}.
\]
This is a $ K[ t ] $-module which is free of
rank the $ k[ t ] $-rank of $ M $ and since 
$ \tau:  K^\sep[ t ] \to K^\sep[ t ] $ commutes with
the action of $ G_K $ it defines a map
$ \sigma: V \to V $ which makes $ V $
into a $ t $-motif over $ K $.  This $ V $ is an Artin $t$-motif:

\begin{definition}
An \emph{Artin $t$\dash motif over K} is an effective $t$\dash motif $ V $ over $ K $
such that $ V_{K^\sep} $ is isomorphic to the effective $t$-motif $ K^\sep[t]^r $,
where $r$ is the rank of $ V $.
\end{definition}

One easily verifies that $ X \mapsto V $ is an anti-equivalence from the category of
continuous $ \Gal(K^\sep/K) $\dash modules over $ k[t] $ that are free of finite rank over $ k[t] $ to the category of Artin $t$\dash motifs over $ K $.

Let $ M $ be the effective $t$-motif associated to $ E $.
The map $ u $ defines through Theorem \ref{thmext} an
extension of $ K[ t, \sigma ] $-modules
\[
	0 \to V \to \tilde{M} \to M \to 0
\]
which defines the $ t $-motif $ \tilde{M} $ associated
with $ ( X, E, u ) $.

\begin{definition}
A \emph{1-$t$-motif} is a $ t $-motif $ \tilde{M} $
which fits into an exact sequence
\[
	0 \to V \to \tilde{M} \to M \to 0
\]
with $ V $ an Artin $ t $-motif and $ M $ a 
$ t $-motif of strictly positive weights.
\end{definition}

Note that the exact sequence is uniquely determined by
$ \tilde{M} $, so from propositions \ref{modmot}, \ref{fingen}, and
\ref{thmext} we obtain:

\begin{corollary}
The above construction $ ( X, E, u ) \mapsto \tilde{M} $
defines an anti-equivalence from the category of
$ 1 $-$ t $-modules over $ K $ to the category of
$ 1 $-$ t $-motifs over $ K $. \qed
\end{corollary}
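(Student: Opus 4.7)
The plan is to realize the functor as a composition of well-understood anti-equivalences, after first verifying that the defining filtration of a $1$-$t$-motif is canonical.

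First I would show that for any $1$-$t$-motif $\tilde{M}$ the exact sequence $0 \to V \to \tilde{M} \to M \to 0$ is uniquely determined, and that morphisms of $1$-$t$-motifs automatically preserve it. The key tool is Proposition~\ref{dieuext}: since $V_{K^\sep} \cong K^\sep[t]^r$ as $\sigma$-modules, the Dieudonné $t$-module $V\llp t^{-1}\rrp$ has all weights zero, whereas $M\llp t^{-1}\rrp$ by hypothesis has strictly positive weights. The two weight sets are disjoint, so $\Hom_{K[t,\sigma]}(M, V) = 0 = \Hom_{K[t,\sigma]}(V, M)$, since any such morphism would give a non-zero morphism of Dieudonné $t$-modules after scalar extension. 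Consequently $V$ is recovered intrinsically inside $\tilde{M}$ as its largest weight-zero sub-$\sigma$-module, and any morphism $\tilde{M}_2 \to \tilde{M}_1$ of $1$-$t$-motifs automatically restricts on the $V$-part and descends to the $M$-part.

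This reduces the problem to showing that the category of triples $(V, M, \epsilon)$, with $V$ an Artin $t$-motif, $M$ a positive-weight effective $t$-motif (equivalently, finitely generated over $K[\sigma]$, by Proposition~\ref{fingen}), and $\epsilon \in \Ext^1_{K[t,\sigma]}(M, V)$, is anti-equivalent to the category of $1$-$t$-modules. I would apply the anti-equivalence $X \leftrightarrow V$ for the Artin part (recorded just before the definition of $1$-$t$-motif) and the anti-equivalence $E \leftrightarrow M$ for the positive-weight part (Propositions~\ref{modmot} and~\ref{fingen}). What remains is a natural identification
\[
\Ext^1_{K[t,\sigma]}(M, V) \;\cong\; \Hom_{G_K, k[t]}\bigl(X, E(K^\sep)\bigr).
\]
Over $K^\sep$ one has $V_{K^\sep} \cong \Hom_{k[t]}(X, K^\sep[t])$, so using the freeness of $X$,
\[
\Ext^1_{K^\sep[t,\sigma]}(M_{K^\sep}, V_{K^\sep}) \;\cong\; \Hom_{k[t]}\bigl(X, \Ext^1_{K^\sep[t,\sigma]}(M_{K^\sep}, K^\sep[t])\bigr),
\]
which Theorem~\ref{thmext} applied over $K^\sep$ identifies with $\Hom_{k[t]}(X, E(K^\sep))$; taking $G_K$-invariants then gives the right-hand side of the desired identification.

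The main obstacle is the Galois-descent step that matches $\Ext^1_{K[t,\sigma]}(M, V)$ with the $G_K$-invariants of the $K^\sep$-version. I would handle this either by a Hochschild--Serre spectral sequence argument, exploiting that $V$ is Artin so that the Galois cohomology of $\Hom_{K^\sep[t,\sigma]}(M_{K^\sep}, V_{K^\sep})$ is under control, or more concretely via the explicit description~(\ref{explicitext}): a Galois-equivariant class defines an extension over $K^\sep$ whose $G_K$-fixed part is an extension over $K$, because both end terms and the extension data descend. The functoriality of Theorem~\ref{thmext} under $K \to K^\sep$ finally ensures that the composite of all three identifications agrees with the concrete construction $(X, E, u) \mapsto \tilde{M}$ given in the text, completing the anti-equivalence.
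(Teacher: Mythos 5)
Your proposal is correct and follows essentially the same route as the paper: canonicity of the filtration $0 \to V \to \tilde{M} \to M \to 0$ via the weight argument of Proposition \ref{dieuext}, the anti-equivalences of Propositions \ref{modmot} and \ref{fingen} together with the correspondence $X \leftrightarrow V$ for Artin $t$-motifs, and Theorem \ref{thmext} (applied over $K^\sep$, combined with Galois descent) to identify $u$ with the extension class. The only difference is that you make explicit two points the paper leaves implicit, namely the uniqueness of the exact sequence and the descent step for $\Ext^1_{K[t,\sigma]}(M,V)$, both of which you justify correctly.
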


\smallskip

Finally, we briefly explain the relationship between Theorem
\ref{thmext} and the corresponding result of Papanikolas and Ramachandran
\cite{Papanikolas03}. Let $ E $ be a Drinfeld module of rank $ r $ over $ K $ and $ M=M_E$ 
the corresponding effective $t$\dash motif. Define $ M^\vee $ by
\[
	M^\vee := \Hom_{K[t]}( M, C ).
\]
Note that $ (M^\vee)' = \Hom_{K[t]}( M', C' ) $.
\begin{definition}\label{Mvee}
 We make $ M^\vee $ into an effective $t$\dash motif by the rule
\[
	\sigma_{M^\vee}\colon \Hom_{K[t]}( M', C' ) \to \Hom_{K[t]}( M, C ) \colon
		f \mapsto \sigma_C \circ f \circ \sigma_M^{-1}.
\]
\end{definition}

The map $ \sigma_M $ is not surjective, but the above formula should be read as follows. 
After extending scalars from $ K[t] $ to $ K(t) $, the linear map $ \sigma_M $ becomes an
isomorphism and one has that
$ \sigma_C \circ f \sigma_M^{-1} $ maps
\[
	 (M^\vee)' \subset (M^\vee)' \otimes_{K[t]} K(t)
\]
into
\[
	 M^\vee \subset M^\vee \otimes_{K[t]} K(t).
\]

\begin{proposition}\label{PR}
There is a natural short exact sequence
\[
	0 \longrightarrow \Ext^1( M, K[t] )
	\longrightarrow
	\Ext^1( C, M^\vee ) \longrightarrow
	\Hom_{K[t]}\!\left( C', \frac{M^\vee}{\sigma_{M^\vee}(M^\vee)'} \right)
	\longrightarrow 0.
	\qedhere 
\]
of $ k[t] $\dash modules.
\end{proposition}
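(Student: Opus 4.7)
The plan is to compute both extension groups as cokernels of two-term complexes arising from $K[t]$-linear splittings, and then to exhibit the map of the proposition as being induced by the structure map $\sigma_{M^\vee}$ itself. Since $M$, $C$, $K[t]$, and $M^\vee$ are free of finite rank as $K[t]$-modules, every extension of $\sigma$-modules considered here admits a $K[t]$-splitting, and its $\sigma$-structure is recorded by a cocycle. One therefore obtains
\[
	\Ext^1_{K[t,\sigma]}(M, K[t]) = \coker\Bigl(\Hom_{K[t]}(M, K[t]) \xrightarrow{\lambda\mapsto\sigma_{K[t]}\lambda' - \lambda\sigma_M} \Hom_{K[t]}(M', K[t])\Bigr),
\]
and similarly $\Ext^1_{K[t,\sigma]}(C, M^\vee)$ is the cokernel of $\psi\mapsto \sigma_{M^\vee}\psi' - \psi\sigma_C$ from $\Hom_{K[t]}(C, M^\vee)$ to $\Hom_{K[t]}(C', M^\vee)$.

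Using that $C$, $C'$ and $K[t]$ are each free of rank one over $K[t]$, Hom-tensor adjunction yields canonical $K[t]$-linear identifications $\Hom_{K[t]}(C, M^\vee) \cong \Hom_{K[t]}(C', M^\vee) \cong \Hom_{K[t]}(M, K[t])$, as well as $(M^\vee)' \cong \Hom_{K[t]}(M', C') \cong \Hom_{K[t]}(M', K[t])$. Under these identifications $\sigma_{M^\vee}$ becomes the $K[t]$-linear map
\[
	\Sigma \colon \Hom_{K[t]}(M', K[t]) \longrightarrow \Hom_{K[t]}(M, K[t]), \qquad \Sigma(\eta)(m) := \eta\bigl(\sigma_M^{-1}((t-\theta)m)\bigr),
\]
well-defined because $(t-\theta)M \subseteq \sigma_M(M')$ (a reformulation of $\coker\sigma_M = K[t]/(t-\theta)$, valid for Drinfeld modules). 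A direct computation, using that $\sigma_C$ acts on $C'$ as multiplication by $t-\theta$, verifies that $\Sigma$ sends the coboundary $\sigma_{K[t]}\lambda' - \lambda\sigma_M$ to the coboundary $\sigma_{M^\vee}\psi' - \psi\sigma_C$, where $\psi$ corresponds to $\lambda$ under the identifications above. Hence $\Sigma$ descends to a morphism $\Ext^1(M, K[t]) \to \Ext^1(C, M^\vee)$, which one checks coincides with the natural map $[\tilde M]\mapsto [\tilde M^\vee]$ obtained by applying $\Hom_{K[t]}(-, C)$ to an extension.

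For the exactness I would argue as follows. The map $\sigma_{M^\vee}$ is injective because its source and target are $K[t]$-torsion-free and it becomes an isomorphism after inverting $t-\theta$. Combined with the fact (established in the cocycle computation just above) that the coboundaries on the $\Ext^1(C, M^\vee)$-side are exactly $\Sigma$ applied to the coboundaries on the $\Ext^1(M, K[t])$-side, the injectivity of $\Sigma$ upgrades to injectivity of the induced map on cokernels. The cokernel is then
\[
	\frac{\Hom_{K[t]}(C', M^\vee)}{\Sigma\bigl(\Hom_{K[t]}(M', K[t])\bigr)} \;=\; \frac{M^\vee}{\sigma_{M^\vee}\bigl((M^\vee)'\bigr)} \;=\; \Hom_{K[t]}\!\left(C', \frac{M^\vee}{\sigma_{M^\vee}((M^\vee)')}\right),
\]
the final equality using once more that $C'$ is free of rank one on $e'$. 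The main bookkeeping obstacle is keeping the Frobenius-pullback identifications $(M^\vee)' = \Hom_{K[t]}(M', C')$ and $K[t]' = K[t]$ (via $\sigma_{K[t]}$) consistent across the cocycle computation; once these are pinned down, the remaining algebra reduces to the single identity that $\sigma_C$ equals multiplication by $t-\theta$.
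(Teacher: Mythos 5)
Your proof is correct and follows essentially the same route as the paper: both compute the two $\Ext^1$'s as cokernels of the standard cocycle complexes and compare them via a commutative square whose degree-zero map is an isomorphism and whose degree-one map is (a transcription of) the injective map $\sigma_{M^\vee}$, so that the third term appears as $\coker(\sigma_{M^\vee})$ mapped through $\Hom_{K[t]}(C',-)$. The only difference is cosmetic: you transport everything to the $\Hom_{K[t]}(M,K[t])$-side via the rank-one freeness of $C$ and $C'$ and write the comparison map explicitly as $\Sigma(\eta)(m)=\eta\bigl(\sigma_M^{-1}((t-\theta)m)\bigr)$, whereas the paper works with the maps $\delta_2,\delta_3$ on the $\Hom_{K[t]}(C,M^\vee)$-side.
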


 This recovers the short exact sequence of Theorem 1.1 (b) of \cite{Papanikolas03}. To see that
\[
	\Hom_{K[t]}\!\left( C', \frac{M^\vee}{\sigma_{M^\vee}(M^\vee)'} \right)
\]
is indeed an  $ (r-1) $-dimensional $K$-vector space, first assume that $r>1$. Note that $C$ is free of rank $1$ over $K[t]$ and that the quotient
$ M^\vee / \sigma_{M^\vee}(M^\vee)' $ is the dual of the tangent space of
the  $t$-module corresponding to $M^\vee$. Since $M^\vee$ is pure of weight $ 1-1/r $ and has
rank $ r $, it has dimension $r-1$. If $r=1$ then $ M^\vee $ does not correspond to an abelian
$t$-module, but it is easy to see directly that $ M^\vee = \sigma_{M^\vee}(M^\vee)' $.

\begin{proof}[Proof of the proposition]
The module $ \Ext^1( M, K[t] ) $ is the cokernel of the map
\[
	\delta_1\colon \Hom_{K[t]}( M, K[t] )
	\to \Hom_{K[t]}( M',  K[t] )
	\colon 	f \mapsto f\circ \sigma - \tau \circ f.
\]
This map is naturally isomorphic with the map
\[
	\delta_2\colon \Hom_{K[t]}( C, M^\vee ) \to
		\Hom_{K[t]}( C', (M^\vee)' )
\]
given by
\[
	f \mapsto \sigma_{M^\vee}^{-1} \circ f \circ \sigma_C - f'.
\]
(The double composition is well-defined in the sense of  the remark after Definition \ref{Mvee}.) Consider the square
\begin{equation*}
\begin{CD}
	\Hom_{K[t]}( C, M^\vee ) @>{\delta_2}>>
	\Hom_{K[t]}( C', (M^\vee)' )  \\
	@V{\id}VV @VV{\sigma_{M^\vee}}V \\
	\Hom_{K[t]}( C, M^\vee ) @>{\delta_3}>>
	\Hom_{K[t]}( C', M^\vee )  \\
\end{CD}
\end{equation*}
where the bottom map is the map
\[
	 \delta_3\colon f \mapsto f\circ \sigma_C - \sigma_{M^\vee} \circ f'.
\]
Note that the square commutes and that the cokernel of the bottom map is
$ \Ext^1( C, M^\vee ) $. The right-hand side is injective with cokernel
\[
	\Hom_{K[t]}( C', \frac{M^\vee}{\sigma_{M^\vee}(M^\vee)'} ),
\]
which yields the desired short exact sequence.
\end{proof}

%If $ f \in k[ t ] $ is not contained in the kernel of
%the structure homomorphism $ i $ then
%\[
%	M_f := ( M_{K^s} / f M_{K^s} )^{\sigma=1}
%\]
%is a free $ k[ t ] / (f) $-module of rank $ \rk( M ) $ on which
%$ G_K $ acts continuously. If $ M $ corresponds to
%a triple $ ( X, E, u ) $ under the anti-equivalence of
%the above Theorem then the dual of $ M_f $ can be described
%explicitly as follows: 
%\[
%	\Hom( M_f^\vee, k[ t ] / (f) ) =
%	H^0\left(
%	\left[ X \stackrel{u}{\rightarrow} E( K^s ) \right]
%	\Lotimes k[t] / (f)
%	\right)
%\]
%which means:

\section{Uniformization and Hodge structures}\label{unifandhodge}

For the remainder of this paper we assume that $ K $ is a local field and that
$ | \theta | > 1 $, so in particular
$ \i : k[ t ] \to K : t \mapsto \theta $ is injective. We also
fix an algebraic
closure $ \bar{K} $ of $ K $ and a completion $ C $ of $ \bar{K} $. The
field $ C $ is algebraically closed.

Although we will consider effective $t$-motifs over $ C $, we
will need to \emph{assume} that they are defined over $ \bar{K} \subset C $.
This is because we will use results of Anderson \cite{Anderson86} that use a
locally compact field of definition. It is possible that these results could be generalized to include all effective $t$-motifs over $ C $.

\subsection{Uniformization of abelian $t$-modules}

\begin{proposition}[\emph{see} \S 2 of \cite{Anderson86}]
Let $ E $ be an abelian $ t $-module over $ \bar{K} $. \index{exponential map}
\begin{enumerate}
\item There exists a unique entire $ k[t] $-module homomorphism
$ \exp_E : \Lie_E( C ) \to E( C ) $ that
is tangent to the identity map;
\item The kernel of $ \exp_E $ is a finitely
generated free discrete sub-$ k[t] $-module in $ \Lie_E( C ) $.
\end{enumerate}
\end{proposition}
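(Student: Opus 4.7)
The plan is to construct $\exp_E$ by solving an intertwining equation coefficient-by-coefficient as a series in the $q$-th power Frobenius, verify convergence using $|\theta|>1$, and then deduce discreteness and finite generation of the kernel.

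Fix an isomorphism $E_{\bar K} \cong \GG_{a,\bar K}^d$ of $k$-vector space schemes. This identifies $\Lie_E(C) = C^d$ with the $t$-action given by a matrix $\partial t \in M_d(\bar K)$ satisfying $(\partial t - \theta\,\mathrm{Id})^N = 0$, and identifies the $t$-action on $E(C) = C^d$ with an operator $\rho(t) = \partial t + A_1\tau + \cdots + A_\ell \tau^\ell$ with $A_j \in M_d(\bar K)$, where $\tau$ denotes entry-wise $q$-th power Frobenius. I seek $\exp_E(z) = \sum_{i\geq 0} C_i\, z^{(q^i)}$ with $C_0 = \mathrm{Id}$; the tangency condition is then automatic. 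The functional equation $\exp_E(\partial t\cdot z) = \rho(t)\cdot \exp_E(z)$ becomes, after equating coefficients of $z^{(q^i)}$, a Sylvester-type equation
\[
  \partial t\cdot C_i - C_i\cdot (\partial t)^{(q^i)} = -\sum_{j=1}^{\min(i,\ell)} A_j\cdot C_{i-j}^{(q^j)},
\]
which uniquely determines $C_i$ from $C_0,\ldots,C_{i-1}$: indeed $\partial t$ has only the eigenvalue $\theta$ while $(\partial t)^{(q^i)}$ has only the eigenvalue $\theta^{q^i}$, and $|\theta|>1$ forces these spectra to be disjoint, so the Sylvester operator is invertible. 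This gives existence and uniqueness of $\exp_E$ as a formal series; $k[t]$-linearity follows because the same recursion intertwines $\exp_E$ with the action of any $f(t)\in k[t]$.

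The main obstacle is to show that the resulting formal series actually converges on all of $C^d$. The inverse Sylvester operator has norm comparable to $|\theta|^{-q^i}$, while the right-hand side of the recursion grows only polynomially in $i$ from the $\|C_{i-j}\|^{q^j}$ and $\|A_j\|$. A bootstrap argument then yields a bound of the form $\|C_i\| \leq c \cdot r^{q^i}$ with $r < 1$, which is more than enough for $\sum_i C_i\, z^{(q^i)}$ to converge for every $z\in C^d$ to an entire function. This is where the hypothesis $|\theta|>1$ is essential and constitutes the principal analytic step.

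For (2), tangency to the identity gives $d\exp_E(0)=\mathrm{Id}$, so $\exp_E$ is a local analytic isomorphism at $0$, whence $\ker(\exp_E)$ is discrete in $\Lie_E(C)$; it is a $k[t]$-submodule by $k[t]$-linearity. For finite generation, use that $\partial t$ acts as $\theta\cdot\mathrm{Id}$ plus a nilpotent, so in a suitable norm multiplication by $t$ dilates vectors by a factor comparable to $|\theta|>1$. A standard lattice argument (pick a bounded fundamental domain for multiplication by $\theta$ and bound the number of kernel elements in it using discreteness) then shows that any discrete $\partial t$-stable $k[\theta]$-submodule of $C^d$ is finitely generated over $k[\theta]$, hence over $k[t]$. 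Torsion-freeness over $k[t]$ is automatic since $\theta$ is transcendental over $k$, so $f(\partial t)$ is invertible on $C^d$ for every nonzero $f\in k[t]$; combined with $k[t]$ being a PID this yields freeness of the kernel.
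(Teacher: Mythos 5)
The paper offers no proof of this proposition (it simply cites \S 2 of Anderson's paper), so your argument has to stand on its own, and it has two problems: one fixable slip and one genuine gap. The fixable one is the convergence estimate. A bound $\|C_i\|\le c\,r^{q^i}$ with $r<1$ gives $\|C_i z^{(q^i)}\|\le c\,(r\|z\|)^{q^i}$, which tends to $0$ only for $\|z\|<1/r$; so the bound you call ``more than enough'' proves convergence on a bounded ball, not entireness. Entireness requires $\|C_i\|^{1/q^i}\to 0$. The recursion does deliver this, but by a different bookkeeping than you describe: since $\|S_i^{-1}\|\approx|\theta|^{-q^i}$ and the right-hand side involves $\|C_{i-j}\|^{q^j}$ (which is itself already tiny, not ``polynomially growing in $i$''), the bootstrap gives roughly $\|C_i\|\approx|\theta|^{-iq^i+O(q^i)}$ --- compare the Carlitz module, where $C_i=1/\prod_{j=1}^{i}(\theta^{q^i}-\theta^{q^{i-j}})$. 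So the principal analytic step must be redone, though the approach survives.

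The genuine gap is in (2). The assertion that ``any discrete $\partial t$-stable $k[\theta]$-submodule of $C^d$ is finitely generated over $k[\theta]$'' is false: $C$ is infinite-dimensional over the locally compact field $k((1/\theta))$, so one can choose an infinite almost-orthogonal sequence $x_1,x_2,\ldots$ in $C$ with $\|x_n\|\to\infty$, and then $\bigoplus_n k[\theta]\,x_n$ (take $d=1$, $\partial t=\theta$) is a $\theta$-stable submodule meeting every bounded ball in a finite set, yet of infinite rank. Your fundamental-domain reduction also breaks down on its own terms: $\theta^{-1}\lambda$ need not lie in the kernel (the kernel is only $k[\theta]$-stable, not $k[\theta,\theta^{-1}]$-stable), and in a non-locally-compact space discreteness does not bound the number of lattice points in a bounded region. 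The missing ingredient is arithmetic rather than metric: for $\lambda\in\Lambda:=\ker\exp_E$ the point $\exp_E((\partial t)^{-1}\lambda)$ is killed by $t$, giving an injection $\Lambda/t\Lambda\hookrightarrow E[t](C)$, and $E[t](C)$ is finite because $M_E/tM_E$ is a finite-dimensional $K$-vector space; hence $\Lambda$ has bounded rank and lies in a finite-dimensional $k((1/\theta))$-subspace of $\Lie_E(C)$, where local compactness is available and a contraction argument with $(\partial t)^{-1}$ together with discreteness yields finite generation. Freeness over the PID $k[t]$ then follows from torsion-freeness as you say; that part of your argument is fine.
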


When $ \exp_E $ is surjective we say that $ E $ is \emph{uniformizable}, 
\index{uniformizability} and in
that case we have a short exact sequence of $ k[t] $-modules
\begin{equation}\label{unifseq}
	0
	\longrightarrow
	\Lambda_E
	\longrightarrow
	\Lie_E( C )
	\longrightarrow
	E( C )
	\longrightarrow
	0
\end{equation}
where $ \Lambda_E := \ker( \exp_E ) $.

\begin{example} 
Drinfeld modules are uniformizable \cite{Drinfeld74E}.
\end{example}

\begin{remark}
There exist abelian $t$-modules that are not uniformizable \cite{Anderson86}.
\end{remark}

Whether or not an abelian $t$-module $ E $ is uniformizable may be read off from the
associated $ t $-motif $ M $. To do so, we need the Tate algebra:
\[
	C\{ t \} := 
	\left\{ \sum f_i t^i \in C\llb t \rrb : | f_i | \to 0\text{ as } i \to \infty 
	\right\}.
\] 
This algebra has the following properties:
\begin{enumerate}
	\item $ C\{ t \} $ is a $ C[ t ] $-algebra;
	\item $ \tau : \sum f_i t^i \mapsto \sum f_i^q t^i $ defines an endomorphism
		of the $ k $-algebra $ C\{ t \} $;
	\item $ C\{ t \}^{\tau} = k[ t ] $.
\end{enumerate}

Given a  $ \bar{K}[t] $-module $ M $ we denote by $ M\{t\} $ the tensor product
$ M \otimes_{ \bar{K}[t]} C\{ t \} $. If $ M $ is a $\sigma$\dash module then
$\sigma$ extends to a $ C\{ t \} $\dash linear map $ M'\{t\} \to M\{t\} $. Also
the canonical map $ \tau $ extends to a $ k[t] $\dash linear map $ \tau: M\{t\} \to M'\{t\} $.
The invariants $ M\{t\}^{\sigma\tau} $ form a $k[t]$\dash module.

\begin{definition}[{\cite[\S 2]{Anderson86}}]
  An effective $t$-motif $M$ over $ \bar{K} $ is said to be \emph{analytically trivial}
  if the natural map $ M\{t\}^{\sigma\tau} \otimes_{k[t]} C\{t\} \to M\{t\} $ is
  an isomorphism. \index{analytically trivial}
\end{definition} 

\begin{proposition}[{\cite[\S 2]{Anderson86}}]
Let $ E / \bar{K} $ be an abelian $ t $-module and let $ M $ be the associated effective
$ t $-motif. Let $ r $ be the rank of $ M $. The following are equivalent:
\begin{enumerate}
	\item $ M\{ t \}^{\sigma\tau} $ is free of rank $ r $ as $ k[t] $-module;
	\item $ M $ is analytically trivial;
	\item $ E $ is uniformizable.\qed
\end{enumerate}
\end{proposition}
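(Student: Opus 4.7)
This is Anderson's theorem; I sketch the strategy. The formal equivalence (1) $\Leftrightarrow$ (2) is Frobenius descent. The natural map
$$
\phi\colon M\{t\}^{\sigma\tau} \otimes_{k[t]} C\{t\} \longrightarrow M\{t\}
$$
is always injective: using $C\{t\}^{\tau} = k[t]$, any minimal $C\{t\}$-linear relation $\sum c_i \omega_i = 0$ among $\sigma\tau$-invariants can be compared with its $\sigma\tau$-image $\sum c_i^{(q)} \omega_i = 0$ to produce a shorter relation, unless all ratios $c_i/c_1$ already lie in $C\{t\}^{\tau} = k[t]$, contradicting $k[t]$-independence. The image of $\phi$ is therefore a torsion-free $C\{t\}$-submodule of the rank-$r$ free module $M\{t\}$, of rank $\rk_{k[t]} M\{t\}^{\sigma\tau}\le r$; equality with $r$ is equivalent to $\phi$ being an isomorphism.

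For (2) $\Leftrightarrow$ (3) the plan is to construct a canonical $k[t]$-linear period map
$$
\pi\colon M\{t\}^{\sigma\tau} \longrightarrow \Lambda_E \subset \Lie_E(C)
$$
and show it is an isomorphism. Its construction uses that, thanks to $|\theta| > 1$, the element $t-\theta$ is already a unit in $C\{t\}$, so that $\sigma\colon M'\{t\} \to M\{t\}$ is an isomorphism of $C\{t\}$-modules. Fixing a $C\{t\}$-basis of $M\{t\}$, $\sigma\tau$-invariance for $\omega \in M\{t\}$ becomes the Frobenius equation $\omega = \Phi\,\omega^{(q)}$ with $\Phi \in \GL_r(C\{t\})$; iterating this identity extracts, for each $\omega$, a well-defined class in $M/(t-\theta)^N M$ for $N$ large enough that $(t-\theta)^N$ kills $M/\sigma M'$, which via Proposition \ref{LieE} corresponds to an element of $\Lie_E(C)$. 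One then verifies that this element lies in $\ker \exp_E = \Lambda_E$, so that $\pi$ is well defined.

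Granting that $\pi$ is an isomorphism, condition (2) --- which by the first paragraph is $\rk_{k[t]} M\{t\}^{\sigma\tau} = r$ --- transfers to $\rk_{k[t]} \Lambda_E = r$, which is the standard reformulation of uniformizability (since $\Lambda_E$ is always free over $k[t]$ of rank at most $r$, with equality exactly when $\exp_E$ surjects); the converse implication goes the same way. The principal technical obstacle is verifying that the image of $\pi$ lies in $\ker\exp_E$: this requires matching the power series defining $\exp_E$, obtained by formally inverting $\sigma$ on the $(t-\theta)$-adic completion of $M$, with the iterated Frobenius product used in constructing $\pi$. This coefficient-by-coefficient comparison is the main computation of \cite[\S 2]{Anderson86}.
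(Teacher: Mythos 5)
First, a point of comparison: the paper does not prove this proposition at all --- it is imported verbatim from \cite[\S 2]{Anderson86} (hence the QED sign attached to the statement), so your sketch can only be measured against Anderson's argument and against the fragments of it the paper re-derives later (Lemmas \ref{lemmaone}, \ref{andres}, \ref{pairing} and Proposition \ref{twohodges}). Your treatment of (1) $\Leftrightarrow$ (2) is essentially sound; the one step you assert without justification --- that full rank forces $\phi$ to be surjective --- is true but not formal, and is supplied by a determinant argument: writing a maximal independent family of invariants in a $C\{t\}$-basis of $M\{t\}$ as a matrix $U$, invariance gives $U=\Phi U^{(q)}$ where $\det\Phi$ is, up to $C^\times$, a power of $t-\theta$ and hence a unit in $C\{t\}$ since $|\theta|>1$; so $\det U\neq 0$ forces $\det\Phi\,(\det U)^{q-1}=1$, i.e.\ $\det U$ is a unit and $\phi$ is onto.

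The genuine gap is in (2) $\Leftrightarrow$ (3). The period map you propose has the wrong variance: by Proposition \ref{LieE}, $\Lie_E(C)=\Hom_C(M_C/\sigma M_C',C)$ is a \emph{dual} space, so the ``class in $M/(t-\theta)^N M$'' you extract from an invariant $\omega$ is not an element of $\Lie_E(C)$, and there is no canonical map $M\{t\}^{\sigma\tau}\to\Lambda_E$ of the kind you describe. Anderson's comparison, which the paper recalls as Lemma \ref{pairing} and Proposition \ref{twohodges}, goes the other way: it is a perfect pairing $\Lambda_E\times M\{t\}^{\sigma\tau}\to k[t]$, built from the series $\sum_i \exp_E(t^{-i-1}\lambda)(m)t^i$, identifying $\Lambda_E$ with $\Hom_{k[t]}(M\{t\}^{\sigma\tau},k[t])$. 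Moreover, your final reduction leans on ``$\rk_{k[t]}\Lambda_E=r$ exactly when $\exp_E$ surjects'' as a standard reformulation. Only the implication ``surjective $\Rightarrow$ rank $r$'' is elementary (compare $\Lambda_E/t^n\Lambda_E$ with $E[t^n](C)\cong(k[t]/t^n)^r$, using that $t$ is invertible on $\Lie_E(C)$); the converse carries essentially the full weight of the theorem and is itself established by Anderson with the same rigid-analytic machinery, so as written your argument is circular at exactly the hard step, namely that analytic triviality forces $\exp_E$ to be surjective. To close the gap you would need to construct the pairing (equivalently, the map $\Lambda_E\to\Hom_{k[t]}(M\{t\}^{\sigma\tau},k[t])$), prove the relevant rank identity, and deduce surjectivity of $\exp_E$ from the trivialization itself --- which is the actual content of Anderson's \S 2.
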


\begin{proposition}\label{antrivext}
If $ M_1 $ and $ M_2 $ are analytically trivial $t$-motifs then any extension
of $ M_1 $ by $ M_2 $ is analytically trivial.
\end{proposition}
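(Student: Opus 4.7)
The plan is to propagate analytic triviality through an extension via a snake-lemma argument applied to the operator $\sigma\tau - 1$. Let $0 \to M_2 \to \tilde{M} \to M_1 \to 0$ be an extension of effective $t$\dash motifs with $M_1$ and $M_2$ analytically trivial. Because $M_1$ is free as a $\bar K[t]$\dash module, the sequence splits as $\bar K[t]$\dash modules, so tensoring with $C\{t\}$ yields an exact sequence $0 \to M_2\{t\} \to \tilde{M}\{t\} \to M_1\{t\} \to 0$. I would then apply the snake lemma to the $k[t]$\dash linear endomorphism $\sigma\tau-1$ acting vertically on this sequence. The kernels of $\sigma\tau-1$ are precisely the modules of $\sigma\tau$\dash invariants, and the relevant part of the six-term sequence is
\[
0 \to M_2\{t\}^{\sigma\tau} \to \tilde{M}\{t\}^{\sigma\tau} \to M_1\{t\}^{\sigma\tau} \to \mathrm{coker}\bigl(\sigma\tau-1\bigr|_{M_2\{t\}}\bigr).
\]

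The main (and essentially only) obstacle is to show that $\sigma\tau - 1 \colon M_2\{t\} \to M_2\{t\}$ is surjective, killing the connecting homomorphism. Since $M_2$ is analytically trivial, $M_2\{t\}$ has a $C\{t\}$\dash basis $e_1,\ldots,e_{r_2}$ of $\sigma\tau$\dash fixed elements. Writing $m = \sum a_i e_i$, one finds $\sigma\tau(m) = \sum \tau(a_i) e_i$, where $\tau$ is the coefficient-wise $q$\dash power Frobenius on $C\{t\}$ (recall $\tau(t) = t$). So I must solve $\tau(a) - a = b$ in $C\{t\}$, i.e., $a_n^q - a_n = b_n$ for every Taylor coefficient. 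Each such equation is solvable in $C$ because $C$ is algebraically closed, and a Newton-polygon estimate gives a solution with $|a_n| \leq \max(|b_n|,|b_n|^{1/q})$. Since $|b_n|\to 0$, one may choose lifts with $|a_n|\to 0$, so that $a\in C\{t\}$ as required.

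Once surjectivity is in hand, the sequence $0 \to M_2\{t\}^{\sigma\tau} \to \tilde{M}\{t\}^{\sigma\tau} \to M_1\{t\}^{\sigma\tau} \to 0$ is exact. Tensoring with $C\{t\}$ over $k[t]$ and comparing with the exact sequence $0 \to M_2\{t\} \to \tilde{M}\{t\} \to M_1\{t\} \to 0$ via the natural comparison map, the outer vertical arrows are isomorphisms by the analytic triviality of $M_1$ and $M_2$. The 5\dash lemma then forces the middle arrow $\tilde{M}\{t\}^{\sigma\tau} \otimes_{k[t]} C\{t\} \to \tilde{M}\{t\}$ to be an isomorphism, which is the definition of analytic triviality of $\tilde{M}$.
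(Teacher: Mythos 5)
Your argument is correct and is essentially the paper's proof in homological packaging: the decisive input in both is the surjectivity of $\tau-1$ on $C\{t\}$, obtained by solving $x^q-x=c$ coefficientwise in the algebraically closed field $C$ with norm control so that the solution stays in the Tate algebra, which in the paper appears as splitting a rank-two extension of the trivial $C\{t\}[\sigma]$-module by itself. The only point worth making explicit in your final 5-lemma step is that the sequence of $\sigma\tau$-invariants stays exact after tensoring with $C\{t\}$ because $C\{t\}$ is torsion-free, hence flat, over the principal ideal domain $k[t]$.
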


\begin{proof}
It suffices to show that any extension of the ``trivial'' $C\{t\}[\sigma]$\dash module
$C\{t\}$ by itself splits. Let $ D $ be such an extension. Then $ D $ has a basis
$(e_1, e_2)$ such that $\tau e_1 = e_1$ and $\tau e_2 = e_2 + fe_1 $ for some $f\in C\{t\}$.

The extension splits if and only if there exists a $g \in C\{t\} $ such that 
\[
	f = \tau(g) - g.
\]
Note that if $a$ is an element of $ C $ with $|a|<1$ then there is a (unique) $ x\in C $
such that $ a = x^q - x $ and  $ |x|=|a| $. 
If we write $f$ as $ f=\sum_i f_it^i $ then we can take for $ g $ any power series
$ \sum_i g_it^i $ with $ g_i^q - g_i = f_i $ for all $i$, as long as we choose $g_i$ such
that $ |g_i| = |f_i| $ for all $ i $ sufficiently large.
\end{proof}

Next we will define Hodge structures of analytically trivial $ t $-motifs and use them to recover the uniformization sequence (\ref{unifseq}) from $ M $.

\subsection{Hodge structures of $t$-motifs}

\begin{definition}
A (function field) \emph{Hodge structure} is a diagram $ H_1 \to H_2 $ consisting of
\begin{enumerate}
	\item a $ k[t] $-module $ H_1 $;
	\item a $ C\llb t-\th \rrb $-module $ H_2 $;
	\item a $ k[t] $-linear map $ H_1 \to H_2 $.
\end{enumerate}
A morphism of Hodge structures is a commutative square. The category of Hodge structures
is denoted by $ \Ho $. \index{Hodge structure}
\end{definition}

The category $\Ho$ is an abelian category.

\begin{remark}
These structures were introduced by Pink in \cite{Pink97}. His definition contains both more data and more conditions, but for the purposes of this note the above simple definition suffices. It would be more correct to call our triples pre-Hodge structures.
\end{remark}

If $ M/ C $ is a uniformizable effective $ t$\dash motif then every element
of $ M\{t\}^{\sigma\tau} $
has an infinite radius of convergence \cite[3.13]{Anderson04}. In particular, it makes sense to talk about the
power series expansion of such an element around $ t=\th $. This gives the
Hodge structure $ H_M $ associated with $ M $, namely
\[
	H_M := \big[\, M\{t\}^{\sigma\tau} \longrightarrow M\llb t-\th \rrb\, \big],
\]
where $ M\llb t-\th\rrb := M \otimes_{ C[t]} C\llb t-\th \rrb $. If $ M $ is a $t$\dash motif
over a subfield of $ C $ then we define $ H_M $ to be Hodge structure of the extension of scalars
of $ M $ to $ C $.

In the remainder of this section we relate the Hodge structure  $ H = H_M $ of an
effective $ t $-motif $ M $ with strictly positive weights with the uniformization of the associated abelian $ t $-module $ E= E_M $. We largely follow Anderson \cite{Anderson86}, and for most of the part are merely rephrasing him using the language of Hodge structures.

A special role will be played by the following Hodge structures:
\[
	\One := \big[\, k[t] \to C\llb t-\th\rrb\, \big],
\]
\[	
	A    := \big[\, k[t] \to C\llp t-\th\rrp\, \big],
\]
and
\[		
	B    := \big[\, 0 \to C\llp t-\th\rrp/C\llb t-\th\rrb\, \big].
\]
Note that they naturally sit in a short exact sequence $ 0 \to \One \to A \to B \to 0 $.

\begin{proposition}\label{twohodges}
For all uniformizable abelian $t$-modules $ E $, and functorially in $ E $, there is a commutative square of $ k[t] $-modules 
\begin{equation*}
\begin{CD}
	\Lambda_E @>{\sim}>> \Hom_\Ho\!\big( H, A \big) \\
	@VVV @VVV \\
	\Lie_E( C ) @>{\sim}>> \Hom_\Ho\!\big( H, B \big),
\end{CD}
\end{equation*}
where $ H $ is the Hodge structure of $ M_E $. Both horizontal maps are isomorphisms and the right map is induced by the natural map $ A \to B $.
\end{proposition}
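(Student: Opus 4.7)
The plan is to identify each of $\Hom_\Ho(H,A)$ and $\Hom_\Ho(H,B)$ with a concrete $k[t]$-module directly from the constituents of $H = (M\{t\}^{\sigma\tau} \to M\llb t-\th\rrb)$, and then match these with $\Lambda_E$ and $\Lie_E(C)$ respectively. Commutativity of the square will then follow from naturality, since both vertical arrows are induced by the canonical morphism $A \to B$ of Hodge structures.

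At the heart of the argument is the following \emph{key lemma}, which is a reformulation of a result of Anderson: since $M$ is analytically trivial, the expansion-at-$t=\th$ map (well-defined because elements of $M\{t\}^{\sigma\tau}$ are entire) extends $C\llb t-\th\rrb$-linearly to a short exact sequence
\[
0 \longrightarrow M\{t\}^{\sigma\tau} \otimes_{k[t]} C\llb t-\th\rrb \longrightarrow M\llb t-\th\rrb \longrightarrow M/\sigma M' \longrightarrow 0.
\]
I would prove this by first noting that any $\lambda \in M\{t\}^{\sigma\tau}$ lies in $\sigma(M')\llb t-\th\rrb$, because $\lambda = \sigma(\tau\lambda)$ with $\tau\lambda \in M'\{t\}$. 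This gives the inclusion $M\{t\}^{\sigma\tau} \otimes C\llb t-\th\rrb \subseteq \sigma(M')\llb t-\th\rrb$, and the right-hand side has colength $d := \dim_C(M/\sigma M')$ in $M\llb t-\th\rrb$ by flatness of $C\llb t-\th\rrb$ over $C[t]$. A length computation at $t=\th$, based on the relation $F = \tau(F) \cdot S$ between the trivializing matrix $F$ and the matrix $S$ of $\sigma$, shows that $M\{t\}^{\sigma\tau} \otimes C\llb t-\th\rrb$ also has colength $d$ in $M\llb t-\th\rrb$, forcing the two submodules to coincide.

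Given the key lemma, computing $\Hom_\Ho(H,B)$ is straightforward: a morphism $H \to B$ is a $C\llb t-\th\rrb$-linear map $\phi: M\llb t-\th\rrb \to C\llp t-\th\rrp/C\llb t-\th\rrb$ vanishing on the image of $M\{t\}^{\sigma\tau}$, hence factoring uniquely through $M/\sigma M'$. Torsion duality (Section \ref{torsionduality}, applied to the DVR $C\llb t-\th\rrb$ with $t-\th$ playing the role of $z$) identifies $\Hom_{C\llb t-\th\rrb}(M/\sigma M', C\llp t-\th\rrp/C\llb t-\th\rrb)$ with $\Hom_C(M/\sigma M', C)$ via residues at $t=\th$, and this is $\Lie_E(C)$ by Proposition \ref{LieE}. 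For $\Hom_\Ho(H,A)$, a morphism consists of a pair $(\phi_1, \phi_2)$ with $\phi_1: M\{t\}^{\sigma\tau} \to k[t]$ and $\phi_2: M\llb t-\th\rrb \to C\llp t-\th\rrp$ agreeing after expansion at $t=\th$; since $C\llp t-\th\rrp$ is divisible and hence injective over the DVR $C\llb t-\th\rrb$, such a $\phi_2$ exists and is uniquely determined by $\phi_1$, so $\Hom_\Ho(H,A) \cong \Hom_{k[t]}(M\{t\}^{\sigma\tau}, k[t])$.

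The main obstacle is the remaining identification $\Hom_{k[t]}(M\{t\}^{\sigma\tau}, k[t]) \cong \Lambda_E$, which is the analytic content of Anderson's uniformization theorem. The natural map $\Lambda_E \to \Hom_{k[t]}(M\{t\}^{\sigma\tau}, k[t])$ can be described as follows: a period $\omega$, viewed via Proposition \ref{LieE} as a $C$-linear map $M/\sigma M' \to C$, dualizes through the key lemma and torsion duality to a $C\llb t-\th\rrb$-linear map $M\llb t-\th\rrb \to C\llp t-\th\rrp/C\llb t-\th\rrb$; any $C\llb t-\th\rrb$-linear lift to $M\llb t-\th\rrb \to C\llp t-\th\rrp$ restricts to a $k[t]$-linear map $M\{t\}^{\sigma\tau} \to C\llb t-\th\rrb$, and the fact that it actually lands in $k[t] \subset C\llb t-\th\rrb$ is precisely the condition $\exp_E(\omega) = 0$. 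One then concludes bijectivity from the fact that both $\Lambda_E$ and $M\{t\}^{\sigma\tau}$ are free $k[t]$-modules of the same rank $r$, which is the very characterization of uniformizability recalled in the previous proposition.
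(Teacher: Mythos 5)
Your formal reductions largely coincide with the paper's: your ``key lemma'' is exactly Lemma \ref{lemmaone}, the identification $\Hom_\Ho(H,B)\cong\Hom_C(M/\sigma M',C)=\Lie_E(C)$ via torsion duality and Proposition \ref{LieE} is precisely how the paper obtains the bottom isomorphism, and the identification $\Hom_\Ho(H,A)\cong\Hom_{k[t]}(M\{t\}^{\sigma\tau},k[t])$ is used there as well. Your colength argument for the key lemma is a reasonable alternative to the paper's determinant reduction to the Carlitz motif, although ``a length computation based on $F=\tau(F)\cdot S$'' hides the actual point: one must show that $\tau(\det F)$ does not vanish at $t=\th$, which requires invertibility of $F$ over $C\{t\}$ (analytic triviality) together with an argument locating the zeros of $\det F$.

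The genuine gap is in the final step, which is where all the transcendental content of the proposition sits. First, the map $\Lambda_E\to\Hom_{k[t]}(M\{t\}^{\sigma\tau},k[t])$ you describe is not well defined as stated: the lift $\tilde\psi\colon M\llb t-\th\rrb\to C\llp t-\th\rrp$ is unique only up to a $C\llb t-\th\rrb$\dash linear map $M\llb t-\th\rrb\to C\llb t-\th\rrb$, and changing the lift changes its restriction to $M\{t\}^{\sigma\tau}$ by the restriction of such a map; already for the Carlitz motif a generic lift does not restrict into $k[t]$ even when $\omega$ is a period. The correct assertion --- there exists a lift whose restriction to $M\{t\}^{\sigma\tau}$ takes values in $k[t]$ if and only if $\exp_E(\omega)=0$ (and such a lift is then unique, i.e.\ $\Hom_\Ho(H,\One)=0$) --- is exactly the nontrivial analytic input, and you assert it without any argument. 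For Carlitz it amounts to the period identity relating the rigid analytic trivialization at $t=\th$ to $\tilde\pi$; in general it is Anderson's residue formula $\Res_{t=\th}\sum_{i}\exp_E(t^{-i-1}\epsilon)(m)t^i=-\epsilon(m+\sigma M')$ together with the perfectness of the pairing $\Lambda_E\times M\{t\}^{\sigma\tau}\to k[t]$, which the paper quotes as Lemmas \ref{andres} and \ref{pairing} from \cite{Anderson86} and uses both for the top isomorphism and for the commutativity of the square. Second, your concluding bijectivity argument is invalid: an injective map between free $k[t]$\dash modules of the same rank need not be an isomorphism (multiplication by $t$ on $k[t]$), so the equality $\rk_{k[t]}\Lambda_E=\rk_{k[t]}M\{t\}^{\sigma\tau}=r$ coming from uniformizability does not yield surjectivity; surjectivity is again precisely the perfectness of Anderson's pairing. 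Without these inputs (or an independent proof of equivalent statements) your proposal establishes only the bottom isomorphism of the square.
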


Before giving the proof we first state the following consequence:

\begin{corollary} $ \Lambda_E $ generates $ \Lie_E(C) $ as a $ C\llb t-\th \rrb $-module.
\end{corollary}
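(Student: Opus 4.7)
My plan is to use Proposition~\ref{twohodges} to translate the statement into one about the Hodge structure $H = H_M$. Under the identifications there, the inclusion $\Lambda_E \hookrightarrow \Lie_E(C)$ corresponds to the $k[t]$-linear map $\Hom_\Ho(H,A) \to \Hom_\Ho(H,B)$ induced by the short exact sequence $0 \to \One \to A \to B \to 0$, and the natural $C\llb t-\th \rrb$-action on $\Lie_E(C)$ corresponds to the post-composition action on $\Hom_\Ho(H,B)$ coming from $B_2 = C\llp t-\th\rrp/C\llb t-\th\rrb$ (this compatibility is built into the construction of the horizontal isomorphisms of the proposition). So the claim reduces to showing that the image of $\Hom_\Ho(H,A) \to \Hom_\Ho(H,B)$ generates the target as a $C\llb t-\th\rrb$-module.

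To prove this, I would unwind both Hom-groups as spaces of $C\llb t-\th\rrb$-linear maps $H_2 \to C\llp t-\th\rrp$ with prescribed behaviour on the image of $H_1$: a morphism to $A$ sends $H_1$ into $k[t]$, whereas a representative of a morphism to $B$ only sends $H_1$ into $C\llb t-\th\rrb$, with two representatives equivalent modulo $C\llb t-\th\rrb$-valued maps. The crucial input is that $M$ is analytically trivial, so a $k[t]$-basis of $H_1$ pushes forward to a $C\llp t-\th\rrp$-basis of $H_2 \otimes_{C\llb t-\th\rrb} C\llp t-\th\rrp$; fixing also a $C\llb t-\th\rrb$-basis of $H_2$ yields a change-of-basis matrix $U$ with entries in $C\llb t-\th\rrb$ that is invertible over $C\llp t-\th\rrp$.

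The rest is a short bookkeeping calculation. Given a representative $f$ of a class in $\Hom_\Ho(H,B)$ with coordinate vector $x \in C\llp t-\th\rrp^r$ relative to the chosen basis of $H_2$, the condition $f(H_1) \subset C\llb t-\th\rrb$ translates to $Ux \in C\llb t-\th\rrb^r$. Because $C\llb t-\th\rrb^r = C\llb t-\th\rrb \cdot k[t]^r$ trivially, I can write $Ux = \sum_\alpha c_\alpha z^{(\alpha)}$ with $c_\alpha \in C\llb t-\th\rrb$ and $z^{(\alpha)} \in k[t]^r$; then $U^{-1} z^{(\alpha)}$ are coordinate vectors of morphisms $g^{(\alpha)} \in \Hom_\Ho(H,A)$, and $x = \sum_\alpha c_\alpha \, U^{-1} z^{(\alpha)}$ exhibits the class of $f$ as a $C\llb t-\th\rrb$-combination of images of elements of $\Hom_\Ho(H,A)$. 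I do not expect a serious obstacle: the content sits entirely in Proposition~\ref{twohodges}, and the final step is just the observation that $k[t]^r$ already spans $C\llb t-\th\rrb^r$ over $C\llb t-\th\rrb$.
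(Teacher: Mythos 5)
Your proposal is correct and follows essentially the paper's own route: the paper likewise reduces via Proposition \ref{twohodges} to the surjectivity of $\Hom_\Ho(H,A)\otimes_{k[t]} C\llb t-\th\rrb \to \Hom_\Ho(H,B)$ and dismisses that step as straightforward, which is exactly the change-of-basis computation with $U$ that you spell out. One cosmetic remark: the fact that a $k[t]$-basis of $H_1$ becomes a $C\llp t-\th\rrp$-basis of $H_2\otimes_{C\llb t-\th\rrb}C\llp t-\th\rrp$ is precisely Lemma \ref{lemmaone} (the cokernel $M/\sigma M'$ is $(t-\th)$-power torsion), not just the bare definition of analytic triviality, so you should cite that lemma rather than analytic triviality alone.
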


This corollary generalises Corollary 3.3.6 of \cite{Anderson86},
which employs the additional hypothesis that $ E $ be pure.

\begin{proof}[Proof of the corollary]
By Proposition \ref{twohodges} it suffices to check that the map
\[
	\Hom_\Ho( H, A )\otimes_{k[t]} C\llb t-\th\rrb \to \Hom_\Ho( H, B ) 
\]
induced by $ A \to B $ is surjective, which is straightforward.
\end{proof}

The proof of Proposition \ref{twohodges} fills the rest of this section.

\begin{lemma}\label{lemmaone}
The sequence of $ C\llb t-\th \rrb $-modules
\begin{equation}\label{seq1}
	0 \longrightarrow
	M\{t\}^{\sigma\tau} \otimes_{k[t]} C\llb t-\th\rrb \longrightarrow
	M[[t-\th]] \overset{-\epsilon}%\Res_{t=\th}\beta
	{\longrightarrow}
	M/\sigma M' \longrightarrow
	0
\end{equation}
is exact.
\end{lemma}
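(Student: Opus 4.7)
The first map of the sequence sends an element of $M\{t\}^{\sigma\tau}$ to its power series expansion around $t=\th$; this is well-defined because every such element is entire by \cite[3.13]{Anderson04}, and one extends $C\llb t-\th\rrb$-linearly. The map $\epsilon$ is the natural projection $M\llb t-\th\rrb\twoheadrightarrow M\llb t-\th\rrb/\sigma M'\llb t-\th\rrb$ under the identification $(M/\sigma M')\otimes_{C[t]}C\llb t-\th\rrb=M/\sigma M'$, which is valid since $M/\sigma M'$ is killed by some power of $t-\th$. Surjectivity at the right is then automatic; the content is to show that the first map is injective with image exactly $\sigma M'\llb t-\th\rrb$.

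The containment of the image inside $\sigma M'\llb t-\th\rrb$ is formal: for $m\in M\{t\}^{\sigma\tau}$ the identity $m=\sigma(\tau m)$ says that the expansion of $m$ at $t=\th$ equals $\sigma$ applied to the expansion of $\tau m$, hence lies in $\sigma M'\llb t-\th\rrb$. For the reverse inclusion the plan is to compare $C\llb t-\th\rrb$-colengths inside $M\llb t-\th\rrb$. Fix a $C[t]$-basis $e_1,\ldots,e_r$ of $M$ and let $\Sigma\in M_r(C[t])$ be the matrix of $\sigma:M'\to M$ with respect to $(\tau e_i, e_j)$. Effectiveness plus Smith normal form over $C[t]$ give $\det\Sigma=c(t-\th)^n$ with $n=\dim_C M/\sigma M'$, so $\sigma M'\llb t-\th\rrb$ has colength $n$ in $M\llb t-\th\rrb$. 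By uniformizability together with analytic triviality, a $k[t]$-basis $f_1,\ldots,f_r$ of $M\{t\}^{\sigma\tau}$ is simultaneously a $C\{t\}$-basis of $M\{t\}$; writing $f_j=\sum_i\Phi_{ij}e_i$ produces a matrix $\Phi$ with entire entries which lies in $GL_r(C\{t\})$. The invariance $\sigma\tau f_j=f_j$ translates into $\Phi=\Sigma\cdot\Phi^{(q)}$, and therefore $\Delta:=\det\Phi$ satisfies the functional equation $\Delta=c(t-\th)^n\Delta^{(q)}$.

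The main obstacle is to show that $\mathrm{ord}_{t=\th}\Delta=n$. Because $\Phi\in GL_r(C\{t\})$, $\Delta$ is a unit in $C\{t\}$ and hence has no zeros in the closed unit disk. Because $\Delta^{(q)}$ has a zero of order $m$ at $\alpha$ precisely when $\Delta$ has a zero of order $m$ at $\alpha^{1/q}$, the functional equation shows that any zero of $\Delta$ outside the forward Frobenius orbit $\{\th^{q^k}:k\geq 0\}$ produces an infinite chain $\alpha,\alpha^{1/q},\alpha^{1/q^2},\ldots$ of zeros of equal multiplicity, whose absolute values lie in the bounded annulus $1<|t|\leq|\alpha|$; this contradicts the finiteness of zeros of an entire function over $C$ in any bounded region. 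Hence the zero set of $\Delta$ is contained in $\{\th^{q^k}:k\geq 0\}$, and the functional equation then forces $\mathrm{ord}_{t=\th}\Delta=n$. Consequently $\det\Phi=(t-\th)^n\cdot u$ with $u\in C\llb t-\th\rrb^\times$, so the first map is injective with image of $C\llb t-\th\rrb$-colength $n$ in $M\llb t-\th\rrb$; since that image is contained in $\sigma M'\llb t-\th\rrb$, which also has colength $n$, the two submodules must coincide.
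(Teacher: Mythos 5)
Your proof is correct, but it follows a genuinely different route from the paper's. The paper places the map in a commutative square with $\sigma$ as vertical arrows, reduces the key assertion (that the top map $M'\{t\}^{\tau\sigma}\otimes_{k[t]}C\llb t-\th\rrb \to M'\llb t-\th\rrb$ is an isomorphism) to a determinant statement, uses compatibility with tensor and exterior powers to reduce to the Carlitz motif, and there invokes the explicit generator $(-\th)^{-q/(q-1)}\prod_{i>0}(1-t/\th^{q^i})e$; the sequence is then obtained by comparing cokernels. You instead work directly with the rigid analytic trivialization matrix $\Phi\in\GL_r(C\{t\})$ with entire entries, derive the functional equation $\Delta=c(t-\th)^n\Delta^{(q)}$ for $\Delta=\det\Phi$, and pin down $\mathrm{ord}_{t=\th}\Delta=n$ by the zero-chain argument (a zero off the forward Frobenius orbit of $\th$ would generate infinitely many zeros accumulating in the annulus $1<|t|\le|\alpha|$, impossible for a nonzero entire function that is a unit of the Tate algebra); exactness then follows from the colength comparison, since the image sits inside $\sigma M'\llb t-\th\rrb$ and both have colength $n=\mathrm{ord}_{t=\th}\det\Sigma$. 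Your approach buys self-containedness: it avoids the reduction to the Carlitz case and the explicit period function, and it in effect reproves, uniformly for any analytically trivial $M$, the fact that the determinant of a rigid analytic trivialization vanishes at $t=\th$ to order exactly $\dim_C M/\sigma M'$. The paper's approach buys brevity (given the classical formula for the Carlitz trivialization) and makes visible the compatibility of the construction with tensor operations. The only points to keep explicit in a final write-up are the ones you already flag: entireness of elements of $M\{t\}^{\sigma\tau}$ (so that expansion at $t=\th$ and the twist $\tau m$ make sense there), freeness of rank $r$ of $M\{t\}^{\sigma\tau}$ over $k[t]$ (part of Anderson's analytic-triviality equivalence), and the fact that coefficientwise Frobenius sends the zero divisor of an entire function to its image under $\alpha\mapsto\alpha^q$ with multiplicities preserved.
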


\begin{proof}
Consider the commutative square of $ C[[ t-\th ]] $\dash modules
\begin{equation*}
\begin{CD}
	M'\{t\}^{\tau\sigma} \otimes_{k[t]} C\llb t-\th \rrb @>>>
	M'\llb t-\th \rrb \\
		@V{\sigma}VV @V{\sigma}VV \\
	M\{t\}^{\sigma\tau} \otimes_{k[t]} C\llb t-\th\rrb @>>>
	M\llb t-\th \rrb. \\
\end{CD}
\end{equation*}
We first show that the top map is an isomorphism. To do so, it suffices to verify 
that its determinant is an isomorphism. Since the formation of the this map commutes
with tensor products and exterior powers, we can reduce to the case where $ M $ is
the Carlitz $t$\dash motif. In that case a simple computation shows that $ M'\{t\}^{\tau\sigma} $
is generated by
\[
	(-\th)^{\frac{-q}{q-1}} \prod_{i>0} \left( 1 - \frac{ t }{ \th^{q^i} } \right)e
\]
whose image is a generator of $ M'\llb t-\th \rrb $.

Now since the left and upper morphisms are isomorphisms, from which we obtain an isomorphism between the cokernels of the right and lower maps. The cokernel of the right map is isomorphic with
$ M / \sigma M' $, so combining these we get the desired short exact sequence.
\end{proof}

Dualizing the sequence (\ref{seq1}) \`{a} la \S \ref{torsionduality} gives the bottom isomorphism
\[
	\Hom_\Ho( H , B ) \overset{\sim}{\longleftarrow} \Hom_C( M_C/\sigma M_C', C ) = 
	\Lie_E(C)
\]
in the commutative square of Proposition \ref{twohodges}.

\begin{lemma}\label{andres}
Let $ N $ be a non-negative integer. For all $ m \in M $ and for all
$ \epsilon \in \Lie_E(C) = \Hom_C(  M_C/\sigma M'_C, C )  $ 
the Laurent series
\begin{equation}\label{series}
	\beta_N( m, \epsilon ) : = \sum_{i > -N}  \exp( t^{-i-1} \epsilon )( m ) t^i
	\in C\llp t\rrp
\end{equation}
defines a function which is meromorphic in $ t=\th $ and
\begin{equation}\label{resbeta}
	\Res_{t=\th} \beta_N( m, \epsilon ) = - \epsilon( m + \sigma M' ).
\end{equation}
\end{lemma}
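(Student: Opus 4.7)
The approach is to split $\beta_N(m,\epsilon)$ into three pieces: the finitely many terms with $-N<i<0$ form a Laurent polynomial in $t$ (holomorphic at $t=\th$ since $\th\neq 0$, so contributing nothing to the residue); the tail $\sum_{i\geq 0}\exp(t^{-i-1}\epsilon)(m)\,t^i$ splits further into a ``linearized'' part that exhibits the pole at $t=\th$ with the correct residue, plus a nonlinear remainder that is holomorphic on a disk around $0$ strictly larger than $|\th|$.

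For the meaning of $t^{-i-1}\epsilon\in\Lie_E(C)$: since $t-\th$ acts nilpotently on both $\Lie_E(C)$ and (dually) on $M_C/\sigma M_C'$, the element $t$ acts as $A:=\th\cdot\id+N$ with $N$ nilpotent, and $A$ is invertible because $\th\neq 0$. Using tangent-to-the-identity for $\exp_E$ together with the duality pairing $\Lie_E(C)=\Hom_C(M_C/\sigma M_C',C)$ of Proposition \ref{LieE}, I would write, with $\bar m:=m+\sigma M_C'$,
\[
  \exp(t^{-i-1}\epsilon)(m) = \epsilon(A^{-i-1}\bar m) + R_i,
\]
where $R_i$ is a nonlinear remainder. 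Summing the leading part yields a geometric series in the endomorphism $A$:
\[
  \sum_{i\geq 0}\epsilon(A^{-i-1}\bar m)\,t^i = \epsilon\bigl((A-t)^{-1}\bar m\bigr) = \sum_{k=0}^{d-1}\frac{\epsilon((-N)^k\bar m)}{(\th-t)^{k+1}},
\]
which, being a finite sum (as $N$ is nilpotent), is manifestly meromorphic at $t=\th$ with residue $-\epsilon(\bar m)$, matching (\ref{resbeta}).

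It remains to show that $\sum_{i\geq 0}R_i\,t^i$ defines a holomorphic function at $t=\th$, so contributes neither pole nor residue there. After choosing coordinates $E_{\bar{K}}\cong\GG_{a,\bar{K}}^d$, the exponential becomes a power series of the form $\id+\sum_{n\geq 1}E_n\sigma^n$, and $m$ becomes an additive polynomial in the coordinates. The key estimate is that the lowest-order nonlinear term is $q$-th power in the argument, while $|A^{-(i+1)}|$ is bounded up to a constant independent of $i$ by $|\th|^{-(i+1)}$; together these yield $|R_i|=O(|\th|^{-q(i+1)})$, so $\sum R_i\,t^i$ converges on the disk $|t|<|\th|^q$, which strictly contains $\th$. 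The delicate point of the argument is not the residue computation itself but verifying this uniform-in-$i$ bound rigorously: it rests on Anderson's entirety of $\exp_E$ (which forces the matrix coefficients $E_n$ to decay super-exponentially, so $\sup_n|E_n|<\infty$), the additivity of $m$ as a group-scheme homomorphism, and the fact that $m\in M$ has only finitely many nonzero coefficients as a polynomial in $\sigma$. Once this bound is in hand, summing the three pieces delivers the lemma.
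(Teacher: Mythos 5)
Your proof is correct, but it follows a different route from the paper's: the paper's entire proof of Lemma \ref{andres} is the observation that the statement is independent of $N$ (your first step --- the finitely many terms with $i<0$ are holomorphic at $t=\th$ because $\th\neq 0$ --- is exactly this reduction) together with a citation of \cite[3.3.4]{Anderson86} for the case $N=0$. You have in effect reproved the cited result, by what is essentially Anderson's own method: split off the linearization, sum it as a geometric series in the operator $A$ to obtain the rational function $\epsilon((A-t)^{-1}\bar m)$, whose only pole is at the unique eigenvalue $t=\th$ of $A$ and whose residue there is $-\epsilon(\bar m)$, since only the $k=0$ term of your finite nilpotent expansion has a simple pole; then show the remainder is holomorphic on a strictly larger disc. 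The estimate you flag as delicate does go through: in characteristic $p$ the binomial coefficients occurring in $(1+\th^{-1}N)^{-(i+1)}$ lie in the prime field, so $|A^{-(i+1)}|\le C\,|\th|^{-(i+1)}$ with $C$ depending only on the finitely many powers of the nilpotent part; and since $m\circ\exp_E$ minus its differential is an everywhere convergent $\mathbf{F}_q$-linear series whose terms have degree at least $q$ and bounded coefficients (by entirety of $\exp_E$ and finiteness of $m$ as a polynomial in $\sigma$), one gets $|R_i|\le C'\,|\th|^{-q(i+1)}$ for large $i$, hence convergence of $\sum_i R_i t^i$ on $|t|<|\th|^q$, a disc containing $\th$ because $|\th|>1$. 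Two cosmetic points: you use the letter $N$ both for the integer of the statement and for the nilpotent operator $A-\th\cdot\id$, and you should say explicitly that ``meromorphic at $t=\th$'' refers to the continuation of the series (which converges only on $|t|<|\th|$) that your decomposition furnishes. The paper's route buys brevity by outsourcing the analysis; yours buys a self-contained argument in which the residue formula (\ref{resbeta}) is transparent.
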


\begin{proof}
The validity of the statements does not depend on $ N $. For $ N = 0 $ a proof is in \cite[3.3.4]{Anderson86}.
\end{proof}

Consider the map
\[
	\Lie_E( C ) \times M \to C\llp t \rrp  :
	( \epsilon, m ) \mapsto \beta_N( m, \epsilon ) = 
	\sum_{i \geq -N} \exp_E( t^{-i-1}\epsilon )( m ) t^i,
\]
which is $ k[t] $-linear in the first and $ C[\sigma] $-linear in the second argument. The restriction
\[
	\Lambda_E \times M \to C\llp t \rrp
\]
is independent of $ N $. Also, it is $ C[ t, \sigma ] $-linear in $ M $ so it extends
to a  map
\[
	\Lambda_E \times M\{t\} \to C\llp t \rrp,
\]
which is $ C\{ t \}[ \sigma ] $-linear in its second argument.
Restriction to $ \sigma $-invariants now gives a
$ k[ t ] $-bilinear form
\[
	\Lambda_E \times M\{t\}^{\sigma\tau} \to k\llp t \rrp.
\]

\begin{lemma}\label{pairing}
The above bilinear form takes values in $ k[ t ] $ and the
resulting 
\[
	\Lambda_E \times M\{t\}^{\sigma\tau} \to k[t]
\]
is a perfect pairing.
\end{lemma}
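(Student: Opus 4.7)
The plan is to split the claim into two parts: first show that the pairing values lie in $k[t] \subset k\llp t\rrp$, and then show that the adjoint $P\colon \Lambda_E \to \Hom_{k[t]}(M\{t\}^{\sigma\tau}, k[t])$ is an isomorphism of free $k[t]$-modules of rank $r = \rk_{K[t]} M$.

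For integrality I would take $N = 0$, which is legitimate because the pairing on $\Lambda_E \times M\{t\}^{\sigma\tau}$ is $N$-independent. Then $\beta_0(m_0, \epsilon) = \sum_{i\geq 1} \exp_E(t^{-i-1}\epsilon)(m_0)\, t^i$ has no constant or negative term. Since $t^{-1}$ has spectral radius $|\theta|^{-1} < 1$ on the finite-dimensional space $\Lie_E(C)$ (where $t$ acts as $\theta\cdot\id$ modulo nilpotents) and $\exp_E$ is tangent to the identity at zero, the coefficients decay like $O(|\theta|^{-i})$ in $C$; so $\beta_0(m_0, \epsilon)$ lies in the Tate algebra $C\{t\}$. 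Extending $C\{t\}[\sigma]$-linearly to $m \in M\{t\}$, the pairing value stays in $C\{t\}$, and its intersection with $k\llp t\rrp$ (the ring of values on $\sigma\tau$-invariants) equals $C\{t\} \cap k\llp t\rrp = k[t]$, because any Tate-convergent series with coefficients in the discrete finite field $k$ must have eventually vanishing coefficients.

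For perfectness I would use Lemma \ref{lemmaone} and the torsion-duality formalism of \S \ref{torsionduality} (applied over $C\llb t-\theta \rrb$ in place of $k\llb z \rrb$) to observe that every $k[t]$-linear map $f: M\{t\}^{\sigma\tau} \to k[t]$ extends, after tensoring with $C\llb t-\theta \rrb$, uniquely to a $C\llb t-\theta \rrb$-linear map $\tilde f: M\llb t-\theta \rrb \to C\llp t-\theta \rrp$. Its polar part at $t=\theta$ yields an element of $\Hom_C(M/\sigma M', C) = \Lie_E(C)$, fitting into an exact sequence
\[
  0 \to \Hom_{C\llb t-\theta\rrb}\!\big(M\llb t-\theta\rrb, C\llb t-\theta\rrb\big)
  \to \Hom_{k[t]}\!\big(M\{t\}^{\sigma\tau}, k[t]\big)\otimes_{k[t]} C\llb t-\theta\rrb
  \to \Lie_E(C) \to 0.
\]
Using Lemma \ref{andres} (the residue formula) one checks that for $\epsilon \in \Lambda_E$, the composite $\Lambda_E \stackrel{P}{\to} \Hom_{k[t]}(M\{t\}^{\sigma\tau}, k[t]) \otimes C\llb t-\theta\rrb \to \Lie_E(C)$ agrees up to sign with the canonical inclusion $\Lambda_E \hookrightarrow \Lie_E(C)$; this already yields injectivity of $P$.

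The main obstacle I foresee is surjectivity of $P$: since both sides are free $k[t]$-modules of the same rank $r$, surjectivity amounts to showing $\det P \in k[t]$ is a unit (a non-zero constant), but injectivity alone only forces $\det P \neq 0$. One route is to exploit the compatibility of the pairing with tensor products and exterior powers, reducing the claim to the rank-one case of the Carlitz motif, where the pairing can be computed explicitly in terms of Anderson's $\omega$-function and seen to be a unit. Alternatively, a zero of $\det P$ at a maximal ideal $\lambda \subset k[t]$ would force the map $\Lambda_E \otimes_{k[t]} C\llb t-\theta\rrb \to \Lie_E(C)$ to fail surjectivity at $\lambda$, which is to be ruled out using the analytic surjectivity of $\exp_E$ (i.e.\ uniformizability) together with the density properties of entire $k[t]$-linear functions on $\Lie_E(C)$.
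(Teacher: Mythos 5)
The paper itself gives no argument here: its ``proof'' of Lemma \ref{pairing} is a citation to \P 2.6 of \cite{Anderson86}, so you are really being measured against Anderson's argument. Your first two steps are fine and essentially reproduce it: the integrality claim (take $N=0$, note the coefficients $\exp_E(t^{-i-1}\lambda)(m)$ decay geometrically since $t^{-1}$ has spectral radius $|\theta|^{-1}<1$ on $\Lie_E(C)$, so the values lie in $C\{t\}$, and a $\tau$-invariant element of $C\{t\}$ lies in $C\{t\}^\tau=k[t]$) is correct, and injectivity of the adjoint $P$ via Lemma \ref{andres} and the dualization of Lemma \ref{lemmaone} \`{a} la \S\ref{torsionduality} is also sound. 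The genuine gap is exactly where you flag it: surjectivity of $P$, i.e.\ perfectness, and neither of your proposed routes closes it. The reduction to the Carlitz case presupposes that the formation of $\Lambda_E$ and of $M\{t\}^{\sigma\tau}$, together with the pairing, commutes with tensor and exterior powers, so that the top exterior power of the pairing is the pairing for $\det M$; this compatibility is nowhere established (it is itself of roughly the same depth as the lemma), and $\det M$ is in general an $\alpha$-twist of a Carlitz power, so even the rank-one computation you invoke is not literally the classical $\omega$-computation. The second route is flawed as stated: a zero of $\det P$ at a maximal ideal $\lambda=(f)\subset k[t]$ lives over the points $t=\zeta$ with $\zeta$ algebraic over $k$, whereas $\Lie_E(C)$ is a $C\llb t-\theta\rrb$-module supported entirely at $t=\theta$, and $\theta$ is transcendental over $k$ because $\i$ is injective; so $\det P\in\lambda$ has no bearing on surjectivity of $\Lambda_E\otimes_{k[t]}C\llb t-\theta\rrb\to\Lie_E(C)$, and the intended contradiction with uniformizability never materializes.

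What a zero of $\det P$ at $\lambda=(f)$ actually obstructs is the reduced pairing $\Lambda_E/f\Lambda_E\times M\{t\}^{\sigma\tau}/fM\{t\}^{\sigma\tau}\to k[t]/(f)$, and the standard way to rule this out does use uniformizability, but through torsion points rather than through $\Lie_E(C)$: since $f(\theta)\neq 0$, $f$ acts invertibly on $\Lie_E(C)$, so the snake lemma applied to the sequence (\ref{unifseq}) identifies $\Lambda_E/f\Lambda_E$ with $E[f](C)$, which is free of rank $r$ over $k[t]/(f)$ because $E[f]$ is \'{e}tale of degree $q^{r\deg f}$; one then identifies the reduced pairing with the canonical evaluation pairing between $E[f](C)=\Hom_{K[\sigma]}(M/fM,C)$ and the $\sigma\tau$-invariants of $(M/fM)\otimes_K C$, which is perfect. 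Some argument of this kind (or the paper's direct appeal to \cite{Anderson86}) is needed; as written, your treatment of perfectness is a statement of intent rather than a proof.
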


\begin{proof}
See \P 2.6 of \cite{Anderson86}.
\end{proof}

Lemma \ref{pairing} provides the top isomorphism
\[
	\Lambda_E \overset{\sim}{\longrightarrow} 
	\Hom_{k[t]}( M\{t\}^{\sigma\tau}, k[t] ) = \Hom_\Ho( H, A ),
\]
of the commutative square whose existence is claimed in Proposition \ref{twohodges}
and Lemma \ref{andres} shows commutativity. This finishes the proof of Proposition
\ref{twohodges}.

\section{Transcendental theory of $1$-$t$-motifs}\label{transtheory}

As in the previous section, $ M $ denotes a uniformizable $ t $-motif over $  \bar{K} $ 
whose weights are strictly positive and $ E = E( M ) $
be the corresponding abelian $ t $-module. The Hodge structure of $ M $ is
denoted by $ H $. In this section we will show that any extension $ \tilde{H} $ of
$ H $ by $ \One $ comes from a unique $1$-$t$-motif $ \tilde{M} $, extension of $ M $ by $ \One $.

\index{extension of Hodge structures}

Applying $ \Hom( H, - ) $ to the short exact sequence 
\[
	0 \to \One \to A \to B \to 0
\]
of Hodge structures we obtain a long exact sequence of $ k[t] $-modules
\[
	\cdots \to \Hom( H, A ) \to \Hom( H, B ) \to \Ext^1( H, \One ) \to \cdots
\]

\begin{lemma}\label{exthodgecal}
The sequence
\[
	0 \to \Hom( H, A ) \to \Hom( H, B ) \to \Ext^1( H, \One ) \to 0
\]
is exact.
\end{lemma}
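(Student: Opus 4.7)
The plan is to apply the functor $ \Hom_\Ho( H, - ) $ to the short exact sequence $ 0 \to \One \to A \to B \to 0 $, which produces the long exact sequence
\[
	0 \to \Hom( H, \One ) \to \Hom( H, A ) \to \Hom( H, B ) \to \Ext^1( H, \One )
	\to \Ext^1( H, A ) \to \cdots.
\]
The claimed exactness of the truncated sequence thus reduces to the two vanishings $ \Hom_\Ho( H, \One ) = 0 $ and $ \Ext^1_\Ho( H, A ) = 0 $.

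The first vanishing is immediate from Proposition \ref{twohodges}: the map $ \Hom( H, A ) \to \Hom( H, B ) $ induced by $ A \to B $ is identified with the inclusion $ \Lambda_E \hookrightarrow \Lie_E( C ) $ of a discrete lattice into its ambient $ C $\dash vector space, hence is injective. Its kernel $ \Hom_\Ho( H, \One ) $ must therefore be zero.

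For the second, fix an arbitrary extension $ 0 \to A \to \tilde H \to H \to 0 $ in $ \Ho $. Since $ M\{t\}^{\sigma\tau} $ is free over $ k[t] $ and $ M\llb t-\th \rrb $ is free over $ C\llb t-\th \rrb $, the two component extensions split as module extensions, so I may write $ \tilde H_1 = k[t] \oplus M\{t\}^{\sigma\tau} $ and $ \tilde H_2 = C\llp t-\th \rrp \oplus M\llb t-\th \rrb $. The structure map then takes the form $ \tilde\varphi( a, h ) = ( a + \alpha( h ), \varphi( h ) ) $ for a unique $ k[t] $\dash linear map $ \alpha : M\{t\}^{\sigma\tau} \to C\llp t-\th \rrp $, and altering the chosen splittings changes $ \alpha $ by a map of the form $ \beta_2 \circ \varphi - \beta_1 $, where $ \beta_1 $ ranges over $ \Hom_{k[t]}( M\{t\}^{\sigma\tau}, k[t] ) $ and $ \beta_2 $ over $ \Hom_{C\llb t-\th \rrb}( M\llb t-\th \rrb, C\llp t-\th \rrp ) $.

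The crux of the argument, and the step I anticipate needing the most care, is to realise every such $ \alpha $ as a coboundary $ \beta_2 \circ \varphi $. By Lemma \ref{lemmaone} the cokernel of the canonical injection $ M\{t\}^{\sigma\tau} \otimes_{k[t]} C\llb t-\th \rrb \hookrightarrow M\llb t-\th \rrb $ is identified with $ M / \sigma M' $, which is annihilated by a power of $ t-\th $ by the defining condition of an effective $ t $\dash motif. Inverting $ t-\th $ therefore yields an isomorphism
\[
	M\{t\}^{\sigma\tau} \otimes_{k[t]} C\llp t-\th \rrp
	\overset{\sim}{\longrightarrow}
	M\llb t-\th \rrb \otimes_{C\llb t-\th \rrb} C\llp t-\th \rrp,
\]
whence, by tensor-hom adjunction, the precomposition map
\[
	\Hom_{C\llb t-\th \rrb}\!\big( M\llb t-\th \rrb, C\llp t-\th \rrp \big)
	\longrightarrow
	\Hom_{k[t]}\!\big( M\{t\}^{\sigma\tau}, C\llp t-\th \rrp \big),
	\quad \beta_2 \mapsto \beta_2 \circ \varphi,
\]
is also an isomorphism. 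Every $ \alpha $ therefore lies in its image, the extension class vanishes, and I conclude $ \Ext^1_\Ho( H, A ) = 0 $, which completes the argument.
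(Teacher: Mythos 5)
Your proof is correct and follows essentially the same route as the paper: reduce to $\Ext^1_\Ho(H,A)=0$ via the long exact sequence (injectivity of $\Hom(H,A)\to\Hom(H,B)$ coming from Proposition \ref{twohodges}), then split any extension by $A$ using freeness of $M\{t\}^{\sigma\tau}$ over $k[t]$ together with the isomorphism after inverting $t-\th$ supplied by Lemma \ref{lemmaone}. The only difference is presentational: you encode the extension as a cocycle $\alpha$ modulo coboundaries and show precomposition with $\varphi$ is bijective by adjunction, whereas the paper transports a splitting $s_1$ of the $k[t]$-component directly to a compatible splitting $s_2$.
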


\begin{proof}
Injectivity of $ \Hom( H, A ) \to \Hom( H, B ) $ follows from Proposition \ref{twohodges}. The only thing that is new is the surjectivity of $ \Hom( H, B ) \to \Ext^1( H, \One ) $. It suffices
to show $ \Ext^1( H, A ) = 0 $. So let
\[
	0 \to A \to \tilde{H} \to H \to 0
\]
with
\[
	 \tilde{H} = \big[ \tilde{H}_1 \to \tilde{H}_2 \big]
\]
be an extension of Hodge structures. Choose a splitting
$ s_1 : \tilde{H_1} \to k[t] $ of
\[
	0 \to k[t] \to \tilde{H}_1 \to H_1 \to 0.
\]
Note that by Lemma \ref{lemmaone} the natural map
\[
	H_1 \otimes_{k[t]} C\llp t-\th \rrp
	\longrightarrow
	H_2 \otimes_{C\llb t-\th\rrb} C\llp t-\th \rrp
\]
is an isomorphism, and hence also 
\[
	\tilde{H}_1 \otimes_{k[t]} C\llp t-\th \rrp
	\longrightarrow
	\tilde{H}_2 \otimes_{C\llb t-\th\rrb} C\llp t-\th \rrp
\]
is an isomorphism. It follows that the splitting $ s_1 $ induces
a compatible splitting $ s_2 : \tilde{H}_2 \to C\llp t-\th \rrp $ and hence
that $ \tilde{H} $ is a split extension of $ H $ by  $ A $.
\end{proof}

Consider now the following commutative diagram:
\begin{equation*}
\begin{CD}
	0 @. 0 \\
	@VVV @VVV \\
	\Lambda_E @>{\sim}>> \Hom_{\Ho}( H, A ) \\
	@VVV @VVV \\
	\Lie_E( C ) @>{\sim}>> \Hom_{\Ho}( H, B ) \\
	@V{\exp_E}VV @VVV \\
	E( C ) @. \Ext^1( H, \One ) \\
	@VVV @VVV \\
	0 @. 0
\end{CD}
\end{equation*}
The left column is the uniformization short exact sequence. The right column is a short exact sequence from Proposition \ref{exthodgecal}. The horizontal isomorphisms are given by Proposition \ref{twohodges}.

\begin{theorem}[repeated from \S {\ref{statements}}]\label{extext}
The natural map 
\[ h : \Ext^1( M_C, C[t] ) \to \Ext^1( H, \One ) \]
is an isomorphism.
\end{theorem}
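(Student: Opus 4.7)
\emph{Plan.} The strategy is to extend the commutative diagram displayed just before the theorem statement by a bottom horizontal arrow, making it a morphism between two short exact sequences; commutativity of the new bottom square, together with the short five lemma applied to these two sequences, will then give the result. Indeed, the left column is the uniformization short exact sequence, the right column is short exact by Lemma \ref{exthodgecal}, and the top two horizontal arrows are the isomorphisms of Proposition \ref{twohodges}. Using Theorem \ref{thmext} to identify $E(C)$ with $\Ext^1_{C[t,\sigma]}(M_C, C[t])$, the required bottom map becomes exactly the map $h$ of the theorem.

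\emph{Defining the bottom map and what must be checked.} Given $\mu \in E(C) = \Hom_{C[\sigma]}(M_C, C)$, the extension $M_\mu$ of (\ref{explicitext}) is analytically trivial by Proposition \ref{antrivext} (since both $M$ and $C[t]$ are), so it has a Hodge structure $H_{M_\mu}$, and $h([M_\mu]) = [H_{M_\mu}]$. Commutativity of the bottom square amounts to the following: for $\epsilon \in \Lie_E(C)$ with image $f_\epsilon \in \Hom_\Ho(H, B)$ under the horizontal isomorphism and image $\mu = \exp_E(\epsilon) \in E(C)$ under $\exp_E$, the Hodge-theoretic extension $H_{M_\mu}$ is isomorphic to the pullback of the short exact sequence $0 \to \One \to A \to B \to 0$ along $f_\epsilon$.

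\emph{Main obstacle and bridge.} The crux of the argument is the verification of this identification, for which the Laurent series $\beta_N(m, \epsilon) = \sum_{i > -N} \exp_E(t^{-i-1}\epsilon)(m)\, t^i$ of Lemma \ref{andres} serves as the natural bridge. Unwinding the construction of the bottom horizontal isomorphism in Proposition \ref{twohodges} (via Lemma \ref{lemmaone} and the torsion duality of \S \ref{torsionduality}) together with the residue identity (\ref{resbeta}), $f_\epsilon(m)$ for $m \in H_1 = M\{t\}^{\sigma\tau}$ is represented by the Laurent expansion of $\beta_N(m, \epsilon)$ at $t = \th$ modulo $C\llb t-\th\rrb$. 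On the other hand, the part of $\beta_N(m,\epsilon)$ consisting of negative powers of $t$ is, up to an $N$-dependent polynomial-in-$t$ correction, precisely the series $\sum_{i \ge 0} \mu(t^i m) t^{-i-1}$ appearing in the explicit extension formula (\ref{explicitext}); this will show that $(m, \beta_N(m, \epsilon))$ is a canonical $\sigma\tau$-invariant lift of $m$ to $M_\mu\{t\}$ and so identifies $H_{M_\mu}$ with the pullback of $A \to B$ along $f_\epsilon$. Verifying this identification while carefully tracking the polynomial ambiguities governed by $N$ is the main technical point; once it is in hand, the short five lemma finishes the proof.
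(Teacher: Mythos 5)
Your skeleton (complete the diagram by a bottom arrow $h$, prove the bottom square commutes, conclude by the five lemma) is exactly the paper's Proposition \ref{pextext}, and your computation is sound where it overlaps with the paper's Lemma \ref{torsion}. The genuine gap is the ``bridge'' step for an arbitrary $\epsilon\in\Lie_E(C)$: the claims that $(m,\beta_N(m,\epsilon))$ is a canonical $\sigma\tau$-invariant lift of $m$ to $M_\mu\{t\}$ and that the negative-power part of $\beta_N(m,\epsilon)$ agrees with $\sum_{i\geq 0}\mu(t^im)t^{-i-1}$ up to a polynomial in $t$ are correct only when $\mu=\exp_E(\epsilon)$ is a torsion point. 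Indeed, the negative-power part of $\beta_N(m,\epsilon)$ is the finite truncation $\sum_{0\leq j<N}\mu(t^jm)t^{-j-1}$, which differs from the series in (\ref{explicitext}) by the infinite tail $\sum_{j\geq N}\mu(t^jm)t^{-j-1}$; this tail is a polynomial (indeed zero) for all $m$ precisely when $t^N\mu=0$. Relatedly, with the convention $\beta_N(m,\epsilon)=\sum_{i\geq -N}\exp_E(t^{-i-1}\epsilon)(m)t^i$ one finds $\beta_N(tm,\epsilon)=t\beta_N(m,\epsilon)+\mu(t^Nm)t^{-N}$, so for non-torsion $\mu$ the map $m\mapsto\beta_N(m,\epsilon)$ is not $C[t]$-linear; hence it does not extend to $M\{t\}$ and cannot be evaluated on elements of $M\{t\}^{\sigma\tau}$ at all, and there is no analogue of Lemma \ref{pairing} (which is only available for $\lambda\in\Lambda_E$) guaranteeing polynomial or even convergent values there; Lemma \ref{andres} only gives meromorphy at $t=\theta$ for $m\in M$. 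Finally, in the presentation (\ref{explicitext}) elements of $M_\mu$ are pairs $(m,f)$ with $f\in C\llp t^{-1}\rrp$, whereas $\beta_N(m,\epsilon)\in C\llp t\rrp$; for non-torsion $\mu$ these live in different completions, so $(m,\beta_N(m,\epsilon))$ is not an element of $M_\mu$ or $M_\mu\{t\}$ in any direct sense.

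This is exactly why the paper verifies commutativity of the bottom square only on the torsion submodule (Lemma \ref{torsion}, where all relevant series lie in $t^{-N}C[t]$ and $\beta_N(\epsilon,-)$ does pair with $M\{t\}^{\sigma\tau}$), and then needs three further, non-formal steps to reach all of $E(C)$: the local analyticity of $h$ (proved by solving $\tau(v)-v=B\cdot\tau(w)$ by a geometric series), the argument of Lemma \ref{locan} that a locally analytic additive map vanishing on the image of $k\llp t^{-1}\rrp^r$ is locally constant, and the final scaling by $t^{p^n}$ in the proof of Proposition \ref{pextext}. As written, your proposal replaces all of this by a verification that only works at torsion points; to repair it you would either need new convergence and linearity statements making $\beta_N(\cdot,\epsilon)$ meaningful on $M\{t\}^{\sigma\tau}$ for arbitrary $\epsilon$ (genuinely new analytic input beyond Lemma \ref{andres}), or some continuity/density mechanism of the kind the paper employs.
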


In fact we will prove the following stronger statement:

\begin{proposition}\label{pextext}
 The unique isomorphism
\[
	u : E( C ) \to \Ext^1( H, \One )
\]
that makes the above diagram commute coincides with the natural map
\[
	h : E( C ) = \Ext^1( M_C, C[t] ) \to \Ext^1( H, \One ).
\]
\end{proposition}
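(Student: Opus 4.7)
The plan is to show that the natural map $h$ makes the diagram preceding the proposition commute in place of $u$; since $u$ is uniquely characterised by this property (the left column is exact and the upper two squares commute), this will force $h = u$. Thus, for any $\epsilon \in \Lie_E(C)$ with image $e := \exp_E(\epsilon) \in E(C)$ corresponding to $\mu \in \Hom_{C[\sigma]}(M_C, C)$, I must verify the equality
\[ h(e) = \partial(\phi_\epsilon) \in \Ext^1(H, \One), \]
where $\phi_\epsilon \in \Hom_\Ho(H, B)$ is the image of $\epsilon$ under the bottom isomorphism of Proposition \ref{twohodges} and $\partial$ is the connecting map of $0 \to \One \to A \to B \to 0$. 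By definition $h(e) = [H_{M_\mu}]$ for the explicit extension $M_\mu$ from (\ref{explicitext}); this $M_\mu$ is analytically trivial by Proposition \ref{antrivext}, so the short exact sequence $0 \to C[t] \to M_\mu \to M_C \to 0$ induces a short exact sequence of Hodge structures $0 \to \One \to H_{M_\mu} \to H \to 0$.

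To identify $[H_{M_\mu}]$ with the pullback $\partial(\phi_\epsilon)$, I will construct an explicit morphism of short exact sequences of Hodge structures
\begin{equation*}
\begin{CD}
0 @>>> \One @>>> H_{M_\mu} @>>> H @>>> 0 \\
@. @| @VVV @VV{\phi_\epsilon}V \\
0 @>>> \One @>>> A @>>> B @>>> 0,
\end{CD}
\end{equation*}
whose existence, by the universal property of pullback, exhibits $[H_{M_\mu}]$ as $\partial(\phi_\epsilon)$. The middle vertical map is built using the embedding $M_\mu \hookrightarrow M \times C\llp t^{-1}\rrp$ of (\ref{explicitext}) together with Anderson's series $\beta_N(\omega, \epsilon)$ from Lemma \ref{andres}, which provides the analytic continuation of the $C\llp t^{-1}\rrp$-coordinate across $t = \th$: for a $\sigma\tau$-invariant lift $\tilde\omega \in M_\mu\{t\}^{\sigma\tau}$ of $\omega \in H_1$, the first-component map extracts from the $C\llp t^{-1}\rrp$-coordinate a polynomial which by $\sigma\tau$-invariance lies in $k[t] = A_1$, while the second-component map uses the Laurent expansion of $\beta_N(\omega, \epsilon)$ at $t = \th$ to land in $A_2 = C\llp t-\th\rrp$.

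Commutativity of the right square is then precisely Anderson's residue formula (\ref{resbeta}): the image of $\omega$ under $\phi_\epsilon$ in $B_2 = C\llp t-\th\rrp/C\llb t-\th\rrb$ has residue $-\epsilon(\omega \bmod \sigma M')$ (this is how the bottom isomorphism of Proposition \ref{twohodges} was constructed, via duality \`a la \S\ref{torsionduality}), which matches the residue at $t = \th$ of $\beta_N(\omega, \epsilon)$. The main obstacle is the identification of the $C\llp t^{-1}\rrp$-coordinate of a $\sigma\tau$-invariant lift $\tilde\omega$ with a representative of Anderson's series $\beta_N(\omega, \epsilon)$. Concretely, starting from a $C[t]$-linear section $\tilde s : M \to M_\mu$, the $\sigma\tau$-invariant lift has the form $\tilde\omega = \tilde s(\omega) + g \cdot 1$ where $g \in C\{t\}$ solves an Artin--Schreier-type equation $\psi(g) - g = \tilde s(\omega) - \psi\, \tilde s(\omega)$; solutions exist because $C$ is algebraically closed and the right-hand side lies in the image of $\psi - 1$ by analytic triviality of $M_\mu$, and the resulting $g$ matches the restriction of Anderson's series via the period pairing of Lemma \ref{pairing}. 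Once this identification is in place, commutativity of the left and outer squares in the diagram is routine, completing the proof.
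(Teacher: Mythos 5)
Your reduction is fine: since $\exp_E$ is surjective and $u$ is by definition the map induced on cokernels, it suffices to check $h(\exp_E\epsilon)=\delta(\alpha(\epsilon))$ for every $\epsilon\in\Lie_E(C)$, and exhibiting a morphism of extensions from $0\to\One\to H_{M_\mu}\to H\to 0$ to $0\to\One\to A\to B\to 0$ over $\phi_\epsilon$ would indeed do this (the use of Proposition \ref{antrivext} to get the short exact sequence of Hodge structures is also correct). But note that this is \emph{not} the paper's route: the paper verifies the identity only on torsion points (Lemma \ref{torsion}), where $\mu$ kills all but finitely many of the relevant $\exp_E$-values, so that the extension (\ref{explicitext}) can be presented inside $M\oplus t^{-N}C[t]$ with finite quotient $t^{-N}C[t]/C[t]$ and the comparison of Hodge structures is a finite, purely algebraic diagram chase; it then proves that $h$ is locally analytic and uses a rigidity argument (Lemma \ref{locan}, plus the $t^{p^n}=\th^{p^n}$ trick) to propagate the equality from the ``dense'' image of $k\llp t^{-1}\rrp^r$ to all of $E(C)$. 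You instead attempt the direct verification for arbitrary $\epsilon$, and that lands you on exactly the difficulty this detour is designed to avoid.

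The genuine gap is the step you yourself call ``the main obstacle'' and then dispose of in one sentence. For general $\epsilon$ the series $\beta_N(\cdot,\epsilon)$ of Lemma \ref{andres} is \emph{not} $C[t]$-linear in $m$: one computes
\[
\beta_N(tm,\epsilon)-t\,\beta_N(m,\epsilon)=\exp_E(t^{N-1}\epsilon)(m)\,t^{-N+1},
\]
which vanishes for all $m$ only when (a $t$-power multiple of) $\epsilon$ lies in $\Lambda_E$ -- this is precisely why the paper extends the $\beta$-pairing to $M\{t\}$ only after restricting the first argument to $\Lambda_E$, and why Lemma \ref{pairing} is a statement about $\Lambda_E\times M\{t\}^{\sigma\tau}$ and cannot deliver the identification you invoke for arbitrary $\epsilon$. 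Consequently it is not even clear what ``$\beta_N(\omega,\epsilon)$'' means for $\omega\in M\{t\}^{\sigma\tau}$, nor is your first-component map well defined as stated: after $\otimes_{C[t]}C\{t\}$ the second coordinate of an element of $M_\mu\{t\}$ lives in $C\llp t^{-1}\rrp\otimes_{C[t]}C\{t\}$, which has no canonical ``polynomial part'', and the $\sigma\tau$-invariance argument does not directly apply. Finally, matching the abstract Artin--Schreier solution $g$ (unique only up to $k[t]$) with the concrete series, and then reading off its expansion at $t=\th$, requires a $\tau$-difference functional equation and convergence/continuation statements for the Anderson generating function attached to an arbitrary $\epsilon$ and an arbitrary abelian $t$-module; Lemma \ref{andres} supplies only meromorphy at $t=\th$ and the residue formula, and nothing in the paper (or in your argument) supplies the rest. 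This missing analytic input is the actual content of the proposition along your route, so as written the proof does not go through; either supply the theory of Anderson generating functions in this generality, or fall back on the torsion-point computation plus the local-analyticity/density argument as in the paper.
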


The proof of this proposition takes up the rest of this section. The idea of the proof is
to first prove that $ u $ and $ h $ coincide on the torsion submodule of $ E( C ) $, and
then to use a kind of density argument to conclude that they coincide on all of $ E( C ) $.

\begin{lemma}\label{torsion}
	For all integers $ N> 0 $ and  $ x \in E( C ) $ with $ t^Nx=0 $ we have $ h(x)=u(x) $.
\end{lemma}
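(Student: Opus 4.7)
Fix $\mu = x \in E(C)$ with $t^N \mu = 0$. Since $E$ is uniformizable in this section, there exists $y \in \Lie_E(C)$ with $\exp_E(y) = \mu$; set $\lambda := t^N y \in \Lambda_E$. Under the isomorphism $\Lie_E(C) \cong \Hom_\Ho(H, B)$ of Proposition \ref{twohodges}, $u(x)$ is represented by the morphism $\bar y \colon H \to B$ that sends $m \in M \subset H_2$ to the class in $C\llp t-\theta\rrp/C\llb t-\theta\rrb$ of the meromorphic function $\beta_N(m, y)$ from Lemma \ref{andres}.

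For $h(x) = [H_{M_\mu}]$ the plan is to build a morphism $s \colon H_{M_\mu} \to A$ of Hodge structures extending the inclusion $\One \hookrightarrow A$ and read off its induced map $\bar s \colon H \to B$. Fix a $C[t]$-basis $\tilde e_1, \ldots, \tilde e_r, \mathbf 1$ of $M_\mu$ with $\tilde e_j$ a lift of $e_j$ (as in the remark after Theorem \ref{thmext}) and a $k[t]$-linear splitting of $0 \to k[t] \to M_\mu\{t\}^{\sigma\tau} \to M\{t\}^{\sigma\tau} \to 0$, yielding lifts $\tilde v_i$ of a $k[t]$-basis of $M\{t\}^{\sigma\tau}$. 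Expanding $\iota \tilde v_i = \sum_j \alpha_{ij}(t)\, \tilde e_j + \gamma_i(t)\, \mathbf 1$ in $M_\mu\llb t-\theta\rrb$, Lemma \ref{lemmaone} shows that $\det(\alpha_{ij})$ is $(t-\theta)^d$ times a unit in $C\llb t-\theta\rrb$, so $(\alpha_{ij})$ is invertible over $C\llp t-\theta\rrp$. Set $s_1(\tilde v_i) := 0$, $s_1(\mathbf 1) := 1$, $s_2(\mathbf 1) := 1$, and $(s_2(\tilde e_j))_j := -(\alpha_{ij})^{-1}(\gamma_i) \in C\llp t-\theta\rrp^r$; the compatibility $s_2 \circ \iota = \iota' \circ s_1$ then holds by construction.

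To conclude $h(x) = u(x)$, it remains to identify $[\bar s]$ and $[\bar y]$ in $\Ext^1(H, \One) = \Hom_\Ho(H, B)/\Hom_\Ho(H, A)$. In the above basis $\bar s(e_j) = s_2(\tilde e_j) \bmod C\llb t-\theta\rrb$ while $\bar y(e_j) = \beta_N(e_j, y) \bmod C\llb t-\theta\rrb$, so the core identity is
\[
  -\sum_k (\alpha^{-1})_{jk}(t)\, \gamma_k(t) \,\equiv\, \beta_N(e_j, y) \pmod{C\llb t-\theta\rrb},
\]
equivalently that for every $\sigma\tau$-invariant $v_i$ the function $\beta_N(v_i, y)$ is regular at $t = \theta$ (modulo polynomial corrections). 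This is proved by combining the Artin--Schreier relation $\gamma_i^{(q)} - \gamma_i = \sum_j c_j \alpha_{ij}^{(q)}$ characterising $\sigma\tau$-invariant lifts (with $c_j$ the structure constants of the $\sigma$-action on $M_\mu$), the $k[t]$-valuedness of the pairing in Lemma \ref{pairing} applied to $\lambda = t^N y$, and the residue formula of Lemma \ref{andres}. This residue identity is the main obstacle: it encodes the fact that $\sigma\tau$-invariant pairings with $y = t^{-N}\lambda$ land in $t^{-N} k[t]$, which has no pole at $t = \theta$ since $\theta \neq 0$.
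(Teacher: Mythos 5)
Your overall strategy is reasonable and close in spirit to the paper's (which also starts from the explicit extension (\ref{explicitext}) and ultimately rests on the fact that $\beta_N(\,\cdot\,,t^{-N}\lambda)$ pairs $\sigma\tau$\dash invariants into $t^{-N}k[t]$, regular at $t=\th$ since $\th\neq 0$), but the decisive step is not actually carried out, and the reduction you state for it is incorrect. The asserted ``equivalence'' between the congruence $-\sum_k(\alpha^{-1})_{jk}\gamma_k\equiv\beta_N(e_j,y)\pmod{C\llb t-\th\rrb}$ and the regularity of $\beta_N(v_i,y)$ at $t=\th$ only goes one way: from the congruence you may multiply by the matrix $\alpha$, whose entries lie in $C\llb t-\th\rrb$, and deduce regularity; but regularity only gives $\alpha\cdot\psi\equiv 0$ with $\psi_j=\bar s(e_j)-\bar y(e_j)$, and since $\alpha^{-1}$ has poles at $t=\th$ (indeed $\det\alpha=(t-\th)^d\cdot\mathrm{unit}$) this does \emph{not} imply $\psi\equiv 0$. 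What regularity actually buys is only that $m\mapsto\beta_N(m,y)$ induces a genuine morphism $\bar y\colon H\to B$ --- a fact you already need, and do not establish, for your very first claim that $u(x)$ is represented by $\bar y$ under Proposition \ref{twohodges}. After that, the whole content of the lemma is still open: you must show $\bar s-\bar y$ lies in the image of $\Hom_\Ho(H,A)$, equivalently (via the residue duality of \S\ref{torsionduality}) that the residue functional of $-\alpha^{-1}\gamma$ agrees with $-y$ modulo $\Lambda_E$; note the exact congruence cannot even be the right formulation, since both sides are only well defined up to $\Lambda_E$ (the choice of logarithm $y$ of $x$, and the choice of invariant lifts $\tilde v_i$, each shift the classes by elements of $\Hom_\Ho(H,A)$). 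Saying this ``is proved by combining'' the Artin--Schreier relation, Lemma \ref{pairing} and Lemma \ref{andres} is a list of plausible ingredients, not a proof.

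For comparison, the paper avoids the coordinate computation entirely: it writes the $1$\dash $t$\dash motif of $\exp_E(t^{-N}\lambda)$ as the kernel of $M\oplus t^{-N}C[t]\to t^{-N}C[t]/C[t]$, applies the Hodge functor to get $h(\exp\epsilon)$ as the kernel of a map of two\dash row diagrams (here Lemma \ref{pairing} and $\th\neq 0$ enter, exactly your ``no pole'' observation, to see the top row lands in $t^{-N}k[t]$), exhibits $\delta\alpha(\epsilon)$ as the kernel of a subdiagram, and concludes because the quotient diagram is an isomorphism. If you want to salvage your route, you must (i) prove the regularity statement (this is where the torsion hypothesis and Lemma \ref{pairing} are used), (ii) compute $\Res_{t=\th}$ of $-(\alpha^{-1}\gamma)_j$ and compare it with $-y(e_j+\sigma M')$ up to $\Lambda_E$, using the Artin--Schreier equation satisfied by the $\mathbf 1$\dash components of the $\tilde v_i$, and (iii) justify the exactness of $0\to k[t]\to M_\mu\{t\}^{\sigma\tau}\to M\{t\}^{\sigma\tau}\to 0$ (analytic triviality of $M_\mu$, Proposition \ref{antrivext}) and the existence of the expansions $\alpha_{ij},\gamma_i$ at $t=\th$ (infinite radius of convergence of elements of $M_\mu\{t\}^{\sigma\tau}$). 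As written, steps (i) and (ii) --- the heart of the lemma --- are missing.
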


\begin{proof}
We need to show that for all $ N>0 $ and for all $ \lambda \in \Lambda_E $ the image of
\[
	\epsilon := t^{-N} \lambda \in \Lie_E
\]
in $ \Ext^1( H, \One ) $ does not depend on the chosen path in the square
\begin{equation*}
\begin{CD}
	\Lie_E( C ) @>{\alpha}>> \Hom_{\Ho}( H, B ) \\
	@V{\exp_E}VV @V{\delta}VV \\
	\Ext^1( M, \One ) @>h>> \Ext^1_\Ho( H, \One ). 
\end{CD}
\end{equation*}
We first construct the $1$-$t$-motif $ \tilde{M} $ corresponding to $ \exp(\epsilon) $. 
By (\ref{explicitext}) it is defined by the short exact sequence
\[
	0 \longrightarrow
	\tilde{M}
	\longrightarrow
	M \oplus t^{-N}C[t]
	\overset{(\beta_{N}(\epsilon,-), -1)}{\longrightarrow}
	t^{-N}C[t] / C[t]
	\longrightarrow
	0.
\]
Applying the functor $ \Ho $ one finds that $ h( \exp(\epsilon) ) $ is given by the
kernel of the following morphism of Hodge structures (from the left column to the right column):
\begin{equation}\label{hexp}
\begin{CD}
	 M\{t\}^{\sigma\tau} \oplus t^{-N}k[t] @>>> t^{-N}k[t]/k[t]  \\
 	            @VVV                            @VVV \\
	 M\llb t-\th\rrb \oplus C\llb t-\th \rrb    @>>>     0,                     
\end{CD}
\end{equation}
where the upper map is given by
\[
	( m , f ) \mapsto \beta_{N}(\epsilon, m ) - f.
\]
The kernel of (\ref{hexp}) coincides with the kernel of
\begin{equation}\label{hexp2}
\begin{CD}
	 M\{t\}^{\sigma\tau} \oplus t^{-N}k[t] @>>> t^{-N}k[t]/k[t]  \\
 	            @VVV                            @VVV \\
	 M\llb t-\th \rrb \oplus C\llp t-\th\rrp    @>>>     C\llp t-\th \rrp/C\llb t-\th \rrb,                     
\end{CD}
\end{equation}
where both horizontal maps are given by $ ( m , f ) \mapsto \beta_{N}(\epsilon, m ) - f $.

On the other hand, the extension $\delta \alpha( \epsilon ) $ is the kernel of the following
subdiagram of (\ref{hexp2})
\begin{equation}\label{hag}
\begin{CD}
	 M\{t\}^{\sigma\tau} \oplus k[t] @>>> 0  \\
 	            @VVV                            @VVV \\
	 M\llb t-\th \rrb \oplus C\llp t-\th \rrp    @>>>     C\llp t-\th \rrp/C\llb t-\th \rrb.                     
\end{CD}
\end{equation}
To conclude that the extensions $ \delta \alpha( \epsilon )  $ and $ h\exp(\epsilon) $
coincide it suffices to observe that the quotient of the diagram (\ref{hexp2}) by the diagram
(\ref{hag}) is an isomorphism when seen as a map from the left column to the right column.
\end{proof}

\begin{lemma}
	The homomorphism $ h: E( C ) \to \Ext^1( H, \One ) $ is
	locally analytic.
\end{lemma}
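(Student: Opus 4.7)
The plan is to exhibit, in a neighborhood of any $\mu_0 \in E(C)$, an analytic lift of $h$ to the space $\Hom_\Ho(H, B)$. By the $k[t]$-linearity of $h$ it suffices to treat a neighborhood of $\mu_0 = 0$, where $\exp_E$ is a local bianalytic isomorphism.

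The key observation is that the extension $M_\mu$ of (\ref{explicitext}) depends analytically on $\mu \in E(C)$: it is cut out inside the fixed ambient module $M \times C\llp t^{-1}\rrp$ by the condition that
\[
	f - \sum_{i \geq 0} \mu(t^i m)\, t^{-i-1} \in C[t],
\]
and, viewing $E$ as a scheme over $C$, each coefficient function $\mu \mapsto \mu(t^i m)$ is a regular function on $E$, hence analytic on $E(C)$. Since $M$ and $C[t]$ are analytically trivial, so is $M_\mu$ by Proposition \ref{antrivext}, and the induced short exact sequence of Hodge structures $0 \to \One \to H_{M_\mu} \to H \to 0$ represents $h(\mu)$.

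I will produce an analytic lift $\tilde h : U \to \Hom_\Ho(H, B)$ of $h$ by writing down a cocycle that represents this extension. Fix a $k[t]$-basis $m_1, \ldots, m_r$ of $H_1 = M\{t\}^{\sigma\tau}$; each $m_j$ is entire in $t$ by \cite[3.13]{Anderson04}. For each $m_j$, the formula (\ref{explicitext}) gives the candidate lift $(m_j, G_{m_j}(\mu,t))$, where $G_{m_j}(\mu,t) := \sum_{i \geq 0} \mu(t^i m_j)\, t^{-i-1}$; to obtain a $\sigma\tau$-invariant lift inside $M_\mu\{t\}$ one corrects by an element of $C\{t\}$ obtained by solving a $(1-\sigma\tau)$-equation over the Tate algebra, whose right-hand side depends analytically on $\mu$. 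Solvability and uniformity of the solution over $U$ follow from the analytic triviality of $M_\mu$, essentially as in \cite[\S 3]{Anderson86}. The expansion of the corrected lift around $t=\th$ then has a $C\llp t-\th\rrp$-component whose class modulo $C\llb t-\th\rrb$ gives $\tilde h(\mu)(m_j)$, and this class depends analytically on $\mu$ coefficient-by-coefficient in the power series expansion around $\th$.

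The main obstacle is the uniform-in-$\mu$ solvability of the $(1-\sigma\tau)$-equation that produces the lifts $\tilde m_j$; once that is established the resulting family of cocycles is manifestly analytic in $\mu$, and local analyticity of $h = [\tilde h] : U \to \Ext^1(H, \One)$ follows because the quotient map $\Hom_\Ho(H, B) \to \Ext^1(H, \One)$ is the quotient by the discrete submodule $\Hom_\Ho(H, A) \cong \Lambda_E$.
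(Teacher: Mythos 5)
You take essentially the same route as the paper: describe the extension attached to $\mu$ explicitly, produce $\sigma\tau$\dash invariant lifts of a basis of $M\{t\}^{\sigma\tau}$ by solving a $\tau$\dash difference equation over the Tate algebra, and read off the extension of Hodge structures from these lifts. But the step you yourself call ``the main obstacle'' is exactly the substance of the lemma, and your appeal to analytic triviality of $M_\mu$ does not supply it. Proposition \ref{antrivext} gives, for each \emph{fixed} $\mu$, the existence of some $v\in C\{t\}$ solving the relevant equation $\tau(v)-v=g$; solutions are determined only up to $C\{t\}^\tau=k[t]$, so they are not a function of $\mu$ until you make a choice, and existence for each $\mu$ says nothing about being able to make that choice analytically, let alone uniformly on a neighbourhood. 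What is actually needed (and what the paper's proof consists of) is a normalization that pins down one solution and exhibits it by a formula: encode the extension by a polynomial row vector $B$ of bounded degree whose coefficients are regular functions of $\mu$, fix a basis $w_1,\dots,w_r$ of $M\{t\}^{\sigma\tau}$, shrink to the neighbourhood on which every $B\cdot\tau(w_i)$ has all coefficients of absolute value $<1$, and take the \emph{unique} solution $v_i$ with all coefficients of absolute value $<1$, given by the convergent series $v_i=\sum_{j\ge 0}\tau^j\bigl(-B\cdot\tau(w_i)\bigr)$. Uniqueness together with this explicit series is what makes the invariant lifts, and hence the resulting extension of Hodge structures, analytic in $\mu$; without it no analyticity claim can be made, so the heart of the argument is missing rather than routine.

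Two further points would need repair if you flesh this out. The expression $\sum_{i\ge 0}\mu(t^im_j)t^{-i-1}$ is not covered by (\ref{explicitext}), which involves only $m\in M$: extending $\mu$ to arguments in $M\{t\}$ requires a convergence argument (the paper extends the analogous pairing to $M\{t\}$ only for elements of $\Lambda_E$, after Lemma \ref{andres}). And prescribing classes $\tilde h(\mu)(m_j)\in C\llp t-\th\rrp/C\llb t-\th\rrb$ on a $k[t]$\dash basis of $H_1$ does not by itself define an element of $\Hom_\Ho(H,B)$: a morphism $H\to B$ is a $C\llb t-\th\rrb$\dash linear map $H_2\to B_2$ vanishing on the image of $H_1$, so you would still have to produce such a map from your data (via Lemma \ref{lemmaone}) and verify that its image under the connecting homomorphism is the class of $H_{M_\mu}$ --- a comparison of the kind the paper carries out in Lemma \ref{torsion}. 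For the present lemma it is cleaner to avoid this altogether and argue, as the paper does, directly with the basis of $\sigma\tau$\dash invariants $(v_i,w_i)$, $(1,0)$ of the extension, whose dependence on $\mu$ is visibly analytic once the $v_i$ are given by the series above.
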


\begin{proof}[Proof of the lemma]
Denote the rank of $ M $ by $ r $  and choose a $ K[t] $\dash basis of $ M $. This induces a basis of $ M' $, so let the $r$ by $r$ matrix $ A $ over $ C[t] $ represent the linear map
$ \sigma : M'_C \to M_C $ with respect to these bases. Extensions of $ M_C $ by $ C[t] $ are then represented by block matrices of the form
\[
	\left(
	\begin{array}{cc}
		1 & B \\
		0 & A 
	\end{array}
	\right)
\]
where $ B $ is a length $r$ row vector. Denote the extension of $ M_C $ by $ C[t] $ given by
$ B $ by $ \tilde{M}(B) $. There is an integer $ d $ such that all extension classes are represented by some $ B $ whose entries have degree at most $ d $ and it suffices to show that
the homomorphism $ h $ is locally analytic as a function of the coefficients of these entries.

The $\sigma\tau$\dash invariants of $ \tilde{M}(B)\{t\} $ are those pairs
$ (v, w ) $ with $ v \in C\{t\} $ and $ w \in M\{t\}^{\sigma\tau} $ such that
\[
	\tau(v) - v = B \cdot \tau(w).
\]
Fix a basis $ w_1, \ldots, w_r $ of $ M\{t\}^{\sigma\tau} $.
Now there exists an $\epsilon>0$ such that for all $B$ whose entries have degree at most $ d $
and whose coefficients have absolute value at most $ \epsilon $ and for all $i$ the power series 
$ B\cdot \tau(w_i) \in C\{t\} $ has all coefficients of absolute value smaller than $1$.

For these $ B $, let $ v_i $ be the unique element of $ C\{t\} $ such that
$ \tau(v_i) - v_i = B \cdot \tau(w_i) $ and which has all coefficients smaller than $1$ in absolute value. This $v_i$ is given by the formula
\[
	v_i = \sum_{j=0}^\infty \tau^j( - B )
\]
and clearly is an analytic function of $ B $. The $ (v_i, w_i ) $ together with $ (1,0) $ 
form a basis of $ \tilde{M}(B)\{t\}^{\sigma\tau} $ and using this basis one sees that
the resulting extension of Hodge structures depends analytically on the coefficients of $ B $. 
\end{proof}

\begin{lemma}\label{locan}
	There exists a neighbourhood $ U $ of $ 0 $ in $ \Lie_E( C ) $ on which
	$ h \circ \exp_E $ and $ u \circ \exp_E $ agree.
\end{lemma}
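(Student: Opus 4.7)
The plan is to combine the local analyticity of $h$ from the preceding lemma, the $k[t]$-linearity of everything in sight, and the coincidence of $h$ and $u$ on torsion from Lemma~\ref{torsion}, via a rigidity argument.

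First I would identify the target concretely. Under the identifications from Proposition~\ref{twohodges} and Lemma~\ref{exthodgecal}, $\Ext^1(H,\One)$ is canonically $\Lie_E(C)/\Lambda_E$ and $u\circ\exp_E$ is the canonical quotient map $\pi$. The preceding lemma together with analyticity of $\exp_E$ gives that $\Phi := h\circ\exp_E - u\circ\exp_E$ is analytic on some neighbourhood $U$ of $0$, valued in $\Lie_E(C)/\Lambda_E$ and with $\Phi(0)=0$. Since $\Lambda_E$ is discrete, after shrinking $U$ we obtain a unique analytic lift $\tilde\Phi\colon U\to\Lie_E(C)$ with $\tilde\Phi(0)=0$ and image in a neighbourhood of $0$ meeting $\Lambda_E$ only at $0$. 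By Lemma~\ref{torsion} applied to each $\exp_E(v)$ with $v\in t^{-N}\Lambda_E$ (which gives a $t$-power torsion point), $\tilde\Phi$ vanishes on $T\cap U$, where $T:=\bigcup_{N\geq0}t^{-N}\Lambda_E$.

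Next I would leverage the $k[t]$-linearity of $\Phi$ to constrain the shape of $\tilde\Phi$. Because $|t^{-1}|=|\th|^{-1}<1$, multiplication by $t^{-1}$ preserves (and shrinks) $U$; the function $v\mapsto\tilde\Phi(t^{-1}v)-t^{-1}\tilde\Phi(v)$ is continuous, $\Lambda_E$-valued, and vanishes at $v=0$, so by discreteness of $\Lambda_E$ it vanishes on a neighbourhood of $0$. Iterating gives $\tilde\Phi(t^{-N}v)=t^{-N}\tilde\Phi(v)$ for all $N\geq0$ on a suitable neighbourhood. Writing $\tilde\Phi(v)=\sum_{n\geq1}P_n(v)$ with $P_n$ homogeneous of degree $n$, and letting $T$ denote the $C$-linear action of $t$ on $\Lie_E(C)$, matching homogeneous components yields $P_n\circ T^{-N}=T^{-N}\circ P_n$ for each $n$ and $N$. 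Since $T$ has only $\th$ as eigenvalue and $|\th|>1$, the operator $T^{-N}$ has norm of order $|\th|^{-N}$ up to polynomial-in-$N$ factors, while $P_n\circ T^{-N}$ scales like $|\th|^{-Nn}$; letting $N\to\infty$ and comparing growth forces $P_n\equiv 0$ for $n\geq 2$. Hence $\tilde\Phi$ is $C$-linear.

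Finally $\tilde\Phi$ is both $C$-linear and $k[t]$-linear, hence $C[t]$-linear. For any $\lambda\in\Lambda_E$, choosing $N$ large enough that $t^{-N}\lambda\in U$, the equality $\tilde\Phi(t^{-N}\lambda)=0$ combined with $t$-equivariance and invertibility of $T^{-N}$ gives $\tilde\Phi(\lambda)=0$. By the corollary following Proposition~\ref{twohodges}, $\Lambda_E$ generates $\Lie_E(C)$ as a $C\llb t-\th\rrb$-module; since $(t-\th)$ acts nilpotently and each $(t-\th)^j\lambda$ is a $C$-linear combination of $t^i\lambda\in\Lambda_E$, the set $\Lambda_E$ itself $C$-spans $\Lie_E(C)$. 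Hence $\tilde\Phi\equiv 0$ on $\Lie_E(C)$, and so $\Phi\equiv 0$ on $U$, as required. The main obstacle I foresee is handling the identity $P_n\circ T^{-N}=T^{-N}\circ P_n$ when $T$ is not semisimple: the nilpotent part of $T$ contributes polynomial-in-$N$ corrections to both sides, and one needs to verify carefully that the exponential scaling afforded by $|\th|>1$ still dominates and rules out $n\geq 2$ in each generalized eigenspace.
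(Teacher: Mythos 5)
Your argument is correct in substance, but it follows a genuinely different route from the paper. The paper does not lift anything: it regards $f=(h-u)\circ\exp_E$ as a locally analytic additive function on $\Lie_E(C)\cong C\llb t-\th\rrb^r/L$, notes via Lemma \ref{torsion} (and continuity) that it vanishes on the image of $k\llp t^{-1}\rrp^r$, reduces to $r=1$, $L=(t-\th)^sC\llb t-\th\rrb$, and then argues by contradiction: if $f$ were not locally constant its kernel would be a smooth analytic subgroup with a \emph{proper} tangent space $T$ at $0$, while the images $v_i$ of $t^{-i}$ lie in the kernel, tend to $0$, and have directions $[v_i]\in\PP(V)$ forming a periodic sequence not contained in any $\PP(T)$. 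You instead lift the difference to $\tilde\Phi$ on $\Lie_E(C)$, use $k[t]$-equivariance together with the contraction $T^{-1}$ (only eigenvalue $\th^{-1}$, $|\th|>1$) to kill every homogeneous component $P_n$ with $n\geq 2$, and then kill the linear part using vanishing at the $t$-division points of $\Lambda_E$ plus the Corollary to Proposition \ref{twohodges} that $\Lambda_E$ spans $\Lie_E(C)$ over $C$ --- an input the paper's proof of this lemma does not need (your conclusion is also slightly stronger, since you show the lift vanishes identically, linear term included). The trade-off: you avoid the somewhat delicate ``smooth analytic subgroup / tangent space / projective periodicity'' step, at the price of operator-norm bookkeeping for the non-semisimple $t$-action, which you correctly flag and which is harmless because the spectral radius of $T^{-1}$ is $|\th|^{-1}<1$ (choose a norm, or a power of $T^{-1}$, with operator norm $<1$; in characteristic $p$ the ``polynomial in $N$'' factors are even bounded). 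Two small repairs: the function $v\mapsto\tilde\Phi(t^{-1}v)-t^{-1}\tilde\Phi(v)$ takes values in $t^{-1}\Lambda_E$ rather than $\Lambda_E$ (still discrete, so the locally-constant argument is unaffected), and the analytic lift $\tilde\Phi$ exists but need not be unique on a totally disconnected neighbourhood --- uniqueness is not used, so this costs nothing.
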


\begin{proof}
Since $ h \circ \exp_E $ is locally analytic and $ u \circ \exp_E $ is analytic their difference 
$ f $ is a locally analytic function on $ \Lie_E( C ) $. 

Now by Proposition \ref{twohodges} we have $ \Lie_E( C ) \cong  C\llb t-\th \rrb^r / L $ with $ L $ 
a free sub $ C\llb t-\th \rrb $\dash module of rank $ r $, such that $ \Lambda_E $ is the image of the natural map $ k[t]^r \to C\llb t-\th \rrb^r / L $. By Lemma \ref{torsion} $ f $ vanishes on the image of 
\[
	k\llp t^{-1} \rrp^r \to C\llb t-\th \rrb^r / L.
\]
We will show that any locally analytic additive function on $ C\llb t-\th \rrb^r / L $
that vanishes on the image of $ k\llp t^{-1} \rrp^r $ is locally constant, and so in particular that there exists a neighbourhood $ U $ of $ 0 $ on which $ f $ vanishes.

Clearly enlarging $ L $ we may assume that $ L = (t-\th)^s C\llb t-\th \rrb^r $ for some positive integer $ s $.  Also, without loss of generality we may assume that $ r = 1 $.  So it suffices to show that  any locally analytic additive function $ g $ on the $ C $-vector space 
\[
	V = C\llb t-\th \rrb / (t-\th)^s C\llb t-\th \rrb
\]
that vanishes on the image of $ k\llp t^{-1} \rrp $ is locally constant.

So assume $ g : V \to C $ is not locally constant. Then the kernel of $ g $ is a smooth analytic subgroup of $ V $ whose tangent space $ T \subset V $ to the origin is a proper sub-vector space.

For any positive integer $ i $ denote the image of $ t^{-i} \in C\llb t-\th \rrb $ in
$ V $ by $ v_i $. The $ v_i $ converge to $ 0 $ as $ i $ tends to infinity. Let $ [v_i] $ denote
the image of $ v_i $ in the projective space $ \PP( V ) $.  The sequence $ ([v_i])_i $ is
periodic yet it there is no  proper subspace $ T \subset V $ such that 
the sequence lies within $ \PP( T ) \subset \PP( V ) $, a contradiction.
\end{proof}

\begin{proof}[Proof of Proposition \ref{pextext}]
Let $ \epsilon $ be an element of $ \Lie_E( C ) $. Note that for sufficiently large $ n $
we have that the actions of $ t^{p^n} $ and $ \th^{p^n} $ on $ \Lie_E( C ) $ coincide. Possibly making $ n $ even larger we may also assume that $ \th^{-p^n}\! \epsilon $ lies in the $ U $ whose existence is asserted in Lemma \ref{locan}. Using such $ n$ we find 
\[
	 (h-u)\exp_E(\epsilon) = t^{p^n}(h-u)\exp_E(\th^{-p^n}\!\epsilon) = 0.
\]
This finishes the proofs of Proposition \ref{pextext} and of Theorem \ref{extext}.
\end{proof}

\bibliographystyle{plain}
\bibliography{../master}

\end{document}